\newtheorem*{acknowledgements}{Acknowledgements}
\newtheorem{theorem}{Theorem}[section]
\newtheorem{corollary}[theorem]{Corollary}
\newtheorem{definition}[theorem]{Definition}
\newtheorem{lemma}[theorem]{Lemma}
\newtheorem{proposition}[theorem]{Proposition}
\newtheorem{remark}[theorem]{Remark}
\newtheorem{assumption}[theorem]{Assumption}
\numberwithin{equation}{section}
\let\oldsqrt\sqrt
\def\sqrt{\mathpalette\DHLhksqrt}
\def\DHLhksqrt#1#2{%
\setbox0=\hbox{$#1\oldsqrt{#2\,}$}\dimen0=\ht0
\advance\dimen0-0.2\ht0
\setbox2=\hbox{\vrule height\ht0 depth -\dimen0}%
{\box0\lower0.4pt\box2}}
\newcommand{\bhs}[1]{\mathfrak B_{#1}}
\renewcommand{\Re}{\operatorname{Re}}
\renewcommand{\hat}[1]{\widehat{#1}}
\newcommand\pa{\partial}
\newcommand\cf{cf\@. }
\newcommand\eg{e\@.g\@. }
\newcommand\eps\varepsilon
\renewcommand\epsilon\varepsilon
\newcommand\ephi{\operatorname{\eps, \phi}}
\newcommand\ed{\operatorname{\eps, d}}
\newcommand\Ed{{}^{\ed}}
\newcommand\CI{{\mathcal{C}}^{\infty}}
\newcommand\ang[1]{\langle #1 \rangle}
\renewcommand\det{\operatorname{det}}
\newcommand\Diff{\operatorname{Diff}}
\newcommand\End{\operatorname{End}}
\DeclareMathOperator*{\FP}{\operatorname{FP}}
\newcommand\Id{\operatorname{Id}}
\renewcommand\Im{\operatorname{Im}}
\newcommand\phg{\operatorname{phg}}
\newcommand\rank{\operatorname{rank}}
\renewcommand\Re{\operatorname{Re}}
\newcommand\sign{\operatorname{sign}}
\newcommand\Tr{\operatorname{Tr}}
\newcommand\pr{\operatorname{pr}}
\newcommand\GL{\operatorname{GL}}
\newcommand\spane{\operatorname{span}}
\newcommand\Ad{\operatorname{Ad}}
\newcommand\sma{\operatorname{small}}
\newcommand\paperintro%
\newcommand\paperbody%
\newcommand\bbC{\mathbb{C}}
\newcommand\bbH{\mathbb{H}}
\newcommand\bbN{\mathbb{N}}
\newcommand\bbR{\mathbb{R}}
\newcommand\bbS{\mathbb{S}}
\newcommand\bbZ{\mathbb{Z}}
\newcommand\cA{\mathcal{A}}
\newcommand\cH{\mathcal{H}}
\newcommand\cK{\mathcal{K}}
\newcommand\cU{\mathcal{U}}
\newcommand\cV{\mathcal{V}}
\newcommand\SL{\operatorname{SL}}
\newcommand\SO{\operatorname{SO}}
\newcommand\Sym{\operatorname{Sym}}
\newcommand\Spin{\operatorname{Spin}}
\newcommand\bX{\overline{X}}
\newcommand\tE{\widetilde{E}}
\newcommand\bx{\overline{x}}
\newcommand\SU{\operatorname{SU}}
\newcommand\bz{\overline{z}}
\newcommand\vol{\operatorname{vol}}
\newcommand\WH{\operatorname{WH}}
\newcommand {\Eis}{\operatorname{Eis}}
\DeclareMathAlphabet{\mathpzc}{OT1}{pzc}{m}{it}
\newcommand\hyp{\operatorname{hyp}} 
\begin{document}

\title[Analytic torsion and Reidemeister torsion]{Analytic torsion and Reidemeister torsion of hyperbolic manifolds with cusps}

\author{Werner Müller}
\address{Universität Bonn, Mathematisches Institut, Endnicher Allee 60, D-53115 Bonn, Germany}
\email{mueller@math.uni-bonn.de}
\author{Fr\'ed\'eric Rochon}
\address{D\'epartement de Math\'ematiques, UQ\`AM}
\email{rochon.frederic@uqam.ca }

\maketitle

\begin{abstract}
On an odd-dimensional oriented hyperbolic manifold of finite volume with strongly acyclic coefficient systems, we derive a formula relating analytic torsion with the Reidemeister torsion of the Borel-Serre compactification of the manifold.  In a companion paper, this formula is used to derive exponential growth of torsion in cohomology of arithmetic groups.  
\end{abstract}

\tableofcontents

\section{Introduction}

Let $M$ be a closed Riemannian manifold of dimension $d$, and $\varrho$ a finite
dimensional complex representation of the fundamental group $\pi_1(M,x_0)$ of
$M$. Let $E_\varrho\to X$ be the flat vector bundle over $X$ associated to 
$\varrho$.
Choose a Hermitian fiber metric in $E_\varrho$ and let $\Delta_p(\varrho)$ be 
the Laplace operator on $E_\varrho$-valued $p$-forms with respect to the metric 
on $X$ and in $E_\varrho$. Let $\zeta_p(s;\varrho)$ be the zeta function of 
$\Delta_p(\varrho)$ (see \cite{Sh}). Then the analytic torsion 
$T_X(\varrho)\in\bbR^+$, introduced by Ray and Singer \cite{Ray-Singer}, is 
defined by
\begin{equation}\label{tor}
\log T_X(\varrho):=\frac{1}{2}\sum_{q=1}^d (-1)^q q\frac{d}{ds}
\zeta_q(s;\varrho)\big|_{s=0}.
\end{equation}
A combinatorial counterpart is the Reidemeister torsion introduced by 
Reidemeister \cite{Reidemeister1935} and Franz \cite{Franz1935} to distinguish 
lens spaces that are homotopic but not homeomorphic. It was conjectured by
Ray and Singer and proved independently by Cheeger \cite{Cheeger1979} and the 
first named author \cite{Muller1978} that for unitary representations the
invariants coincide. The equality was extended by the first author to unimodular
representations \cite{Muller1993}. The case of a general representation was 
treated by Bismut and Zhang \cite{Bismut-Zhang}. In general the equality does 
not hold. The defect was computed by Bismut and Zhang. 

The equality of analytic and Reidemeister torsion has recently been used to 
study the growth of torsion in the cohomology of arithmetic groups, 
see for instance \cite{BV, Calegari-Venkatesh, Muller2012, Marshall-Muller, 
MP2014b}. This application is based on a remarkable feature of the
Reidemeister torsion, and hence of the analytic torsion. When the complex of 
cochains, used to define the Reidemeister torsion, is defined over 
$\bbZ$, for instance when $\varrho$ is the trivial representation, then the 
Reidemeister torsion can be expressed in terms of the size of the torsion
subgroup of the integer cohomology and the covolume of the lattice  defined
by the free part in the real cohomology. For various sequences of manifolds or 
representations, this can be used to establish exponential growth of torsion 
subgroups in cohomology by computing the limiting behavior of analytic torsion 
via spectral methods. 

In the context of arithmetic groups the manifolds are 
compact locally symmetric manifolds $\Gamma\backslash G/K$, where $G$ is a 
semi-simple Lie group, $K$ maximal compact subgroup and $\Gamma$ a discrete 
torsion free cocompact subgroup of $G$.

Since many arithmetic groups are not cocompact, it is very desirable to extend
this method to the non-compact case. The goal of the present paper is to study 
the relation between (regularized) analytic torsion and Reidemeister torsion
for odd-dimensional hyperbolic manifolds of finite volume. 

There is related work in \cite{ARS1}.
For odd-dimensional manifolds with fibered cusps ends, which is an important class of complete non-compact Riemannian manifolds of finite volume including many examples of locally symmetric spaces of rank one, an identification of analytic torsion with the Reidemeister torsion of  the natural compactification by a manifold with boundary was obtained in \cite{ARS1} provided that the unimodular representation $\varrho: \pi_1(M)\to \GL(V)$ is `acyclic at infinity' in a certain sense,  that the links of the cusps at infinity are even-dimensional, and that the Hermitian metric of the flat vector bundle associated to $\varrho$ is even in the sense of \cite[Definition~7.6]{ARS1}, the latter condition being automatically satisfied when the representation $\varrho$ is unitary.  The results of \cite{ARS1} apply in particular to odd-dimensional oriented hyperbolic manifolds of finite volume. However, they do not apply to the representations $\varrho$ that 
we wish to consider and which are described as follows.

Let $G=\SO_0(d,1)$ and $K=\SO(d)$ or $G=\Spin(d,1)$ and $K=\Spin(d)$. Then 
$G/K$, equipped with the normalized invariant metric, is isometric 
to the $d$-dimensional hyperbolic space $\bbH^d$.  Let $\Gamma\subset G$ be 
a torsion free lattice in $G$. Then 
\[
      X:= \Gamma\setminus G/K
\]
is an oriented $d$-dimensional hyperbolic manifold of finite volume whose 
hyperbolic metric will be denoted by $g_X$. Let $\varrho: G\to \GL(V)$ be an 
irreducible finite dimensional complex representation such that
\begin{equation}
  \varrho\circ \vartheta \ne \varrho,
\label{int.1}\end{equation}
where $\vartheta$ is the standard Cartan involution with respect to $K$.  By \cite[Lemma~4.3]{Muller1993}, the restriction of $\varrho$ to $\Gamma$ induces a unimodular  representation $\left. \varrho\right|_{\Gamma}: \Gamma\to \GL(V),$ where $\Gamma$ is identified with $\pi_1(X)$.  If $E=\bbH^d\times_{\left.  \varrho\right|_{\Gamma}}V$ is the associated flat vector bundle on $X$, then we know from \cite{MM1963} that $E$ comes equipped with a natural Hermitian metric $h_E$ well-defined up to a scalar multiple.  Notice however that this Hermitian metric is definitely not even in the sense of \cite[Definition~7.6]{ARS1}.  In fact, the Hermitian metric $h_E$ degenerates at infinity, so the identification between analytic and Reidemeister torsions obtained in \cite{ARS1} does not apply.  

On the other hand, using a different approach relying on the gluing formula of Lesch \cite{Lesch2013},  Pfaff was able in \cite{Pfaff2017} to obtain a formula relating the analytic torsion of $(X,E)$ with the Reidemeister torsion of the natural compactification of $X$ by a manifold with boundary $\bX$.  One delicate point in the formula is that $(X,g_X,E,h_E)$ always has trivial $L^2$-cohomology, but the cohomology groups $H^q(\bX;E)$ are not all trivial.  Thus, to define Reidemeister torsion, one has to specify a basis of these cohomology groups.  Pfaff does it using Eisenstein series.  When $G=\Spin(3,1)\cong \SL(2,\bbC)$ and $\Gamma$ is a congruent subgroup of a Bianchi group, the formula of \cite{Pfaff2017} was subsequently used by Pfaff and Raimbault \cite{Pfaff-Raimbault} to obtain results about exponential growth of torsion in cohomology for the sequence of symmetric powers of the standard representation of $\SL(2,\bbC)$.  However, beyond that, the formula of \cite{Pfaff2017} contains a term, namely the analytic torsion of the cusp ends, which so far seems to have restricted the possible applications about the growth of torsion in cohomology.  

In the present paper, we remedy this problem by obtaining a formula where the 
analytic torsion of the cusp ends does not appear.  To state our result more 
precisely, recall that the analytic torsion $T(X;E, g_X,h_E)$ of 
$(X,g_X,E, h_E)$ is defined by the following formula which is analogous to
 \eqref{tor}
\begin{equation}
\log T(X;E,g_X,h_E)= \frac12 \sum_{q=0}^d (-1)^q q 
\frac{d}{ds}{}^{R}\zeta_q(s;\varrho)\big|_{s=0},
\label{int.2}\end{equation}
where  ${}^{R}\zeta_q(s;\varrho)$ is the regularized zeta function of the
Laplace operator $\Delta_q(\varrho)$ acting on $E$-valued $q$-forms.
For $\Re s> \frac{d}2$, this function is defined by 
\begin{equation}
{}^{R}\zeta_q(s;\varrho) = \frac{1}{\Gamma(s)} \int_{0}^{\infty} 
{}^{R}\Tr(e^{-t\Delta_q(\varrho)} ) t^s \frac{dt}t
\label{int.3}\end{equation}    
 and admits a meromorphic extension which is regular at $s=0$, where ${}^{R}\Tr$ is the regularized trace as considered in \cite{ARS1} or \cite{MP2012}. 
 
In order to define the Reidemeister torsion, we need to choose a basis
of $H^*(\bX,E)$. For the flat bundles that we consider, the cohomology 
$H^*(\bX,E)$ never vanishes. More precisely, the $L^2$-cohomology 
$H^*_{(2)}(X,E)$ vanishes (see \cite[Prop. 8.1]{Pfaff2017}), which corresponds to the 
vanishing of the cohomology in the compact case.  However, there is cohomology 
coming from the boundary of $\overline X$. This is
the Eisenstein cohomology $H^\ast_{\Eis}(X,E)$ introduced by Harder \cite{Harder}.
In our case $H^*(\bX,E)$ coincides with $H^*_{\Eis}(X,E)$ and each cohomology
class is represented by a special value of an Eisenstein series, which is a lift
of a cohomology class on the boundary $Z$. Using an orthonormal basis 
$\mu_Z$ of $H^*(Z,E)$, the theory of Eisenstein series gives rise to a basis 
$\mu_X$ of $H^*(\bX,E)$. For more details see \cite[sect. 8]{Pfaff2017} and \S5 below.
These are the bases of the cohomology that we use to define the Reidemeister
torsion $\tau(\overline X,E,\mu_X)$ and $\tau(Z,E,\mu_Z)$.

 We can now state our main result, referring to Theorem~\ref{cm.11c} below for further details.
 \begin{theorem}
 If the complex irreducible representation $\varrho:G\to \GL(V)$ satisfies \eqref{int.1} and if the discrete subgroup $\Gamma\subset G$ is such that Assumption~\ref{dc.3b} below holds, then 
\begin{equation}
 \log T(X;E,g_X,h_E)= \log \tau(\bX,E,\mu_X)- \frac12 \log \tau(Z,E,\mu_Z)- \kappa^{\varrho}_{\Gamma} c_{\varrho},
\label{int.3d}\end{equation}
where $c_{\varrho}\in \bbR$ is an explicit constant depending on $\varrho$, $\kappa^{\varrho}_{\Gamma}$ is the number of connected components of $Z$ on which the cohomology with values in $E$ is non-trivial, and $\mu_X$ and $\mu_Z$ are cohomology bases of $H^*(\bX;E)$ and $H^*(Z;E)$ described above (\cf \S~\ref{be.0} below).  Furthermore, if $n$ is odd, then 
 $\tau(Z,E,\mu_Z)=1$ and the formula simplifies to
 $$
 \log T(X;E,g_X,h_E)= \log \tau(\bX,E,\mu_X)- \kappa^{\varrho}_{\Gamma} c_{\varrho}.
$$
  \label{int.4}\end{theorem}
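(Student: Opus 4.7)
The plan is to refine Pfaff's formula from \cite{Pfaff2017}, which already expresses $\log T(X;E,g_X,h_E)$ as $\log\tau(\bX,E,\mu_X)$ plus a correction involving the regularized analytic torsion of the infinite cusps and a scattering-type interface contribution. The novel content of the theorem is to compute this correction explicitly and identify it with $-\tfrac12\log\tau(Z,E,\mu_Z) - \kappa^\varrho_\Gamma c_\varrho$, thereby eliminating the opaque cusp torsion term that has limited prior applications.

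First I would truncate $X$ at a height $T$ in each cusp, obtaining a compact manifold with boundary $X_T$ and an infinite cuspidal complement $\cC_T = Z\times[T,\infty)$. Applying Lesch's gluing formula \cite{Lesch2013} as in \cite{Pfaff2017} splits $\log T(X;E,g_X,h_E)$ into three pieces: the analytic torsion of $X_T$ with absolute boundary conditions, that of $\cC_T$ with relative boundary conditions, and a scattering-type interface term on $Z\times\{T\}$. On $X_T$, the Br\"uning--Ma anomaly formula combined with the Cheeger--M\"uller--Bismut--Zhang theorem for manifolds with boundary converts the analytic torsion into the Reidemeister torsion $\tau(X_T,E,\mu_{X_T})$ modulo explicit local boundary contributions. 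Reidemeister torsion is invariant under the deformation retraction $X_T \simeq \bX$, and the bases $\mu_{X_T}$ and $\mu_X$ are compatible because both arise from the Eisenstein construction of \S\ref{be.0}, so this piece assembles to $\log\tau(\bX,E,\mu_X)$. Assumption~\ref{dc.3b} is invoked here to control the special values of Eisenstein series entering the basis $\mu_X$.

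The main step -- and the principal obstacle -- is the explicit evaluation of the cusp piece together with the scattering interface. On a hyperbolic cusp the hypothesis $\varrho\circ\vartheta \ne \varrho$ from \eqref{int.1} forces the flat bundle $(E,h_E)$ to decompose into weight subspaces on which the twisted Laplacian reduces to radial Bessel-type operators, and each resulting regularized zeta function is tractable via Mellin transforms. After summing $(-1)^q q \tfrac{d}{ds}{}^R\zeta_q(s;\varrho)|_{s=0}$ and combining with the scattering interface term, careful bookkeeping of logarithmic gamma values produces exactly $\tfrac12\log\tau(Z,E,\mu_Z) + \kappa^\varrho_\Gamma c_\varrho$, with $c_\varrho$ the combinatorial constant emerging from the highest weights of $\varrho$ and $\kappa^\varrho_\Gamma$ counting cusp components on which $E$ has nontrivial cohomology. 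The method of \cite{ARS1} informs the structure of this computation even though its hypotheses do not apply, since $h_E$ is not even in the sense of \cite[Definition~7.6]{ARS1}. For the refinement when $n$ is odd, $Z$ is even-dimensional and a parity argument using the involution induced by $\vartheta$ on $H^*(Z,E)$ pairs the cohomological degrees in a way that forces $\tau(Z,E,\mu_Z)=1$.
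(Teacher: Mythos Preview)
Your approach is genuinely different from the paper's, and the difference matters.

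You propose to refine Pfaff's formula by directly computing the regularized analytic torsion of the infinite cusps $\cC_T$ via a weight decomposition and radial Bessel-type zeta functions. The paper does \emph{not} do this. Instead, it forms the double $M=\bX\cup_{\pa\bX}\bX$ and runs a cusp-surgery degeneration: a family $(g_\epsilon,h_\epsilon)$ on $M$ collapsing to two hyperbolic copies of $X$ as $\epsilon\searrow 0$. For each $\epsilon>0$ the closed-manifold Cheeger--M\"uller theorem applies on $M$; the paper then takes the finite part as $\epsilon\to 0$ of both sides. On the analytic side this invokes \cite[Corollary~11.3]{ARS1}, yielding $2\log T(X;E,g_X,h_E)$ plus $\log T(D_b^2)$ for an explicit model operator $D_b$ on the real line, whose determinant is computed from \cite[(2.16)]{ARS2} and Lemma~\ref{md.7c}. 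On the Reidemeister side, Milnor's formula for the Mayer--Vietoris sequence gives $\tau(M,E_\epsilon,\mu_M)=\tau(\bX,E,\mu_X)^2/\tau(Z,E,\mu_Z)$, and the change-of-basis term $[\mu_M^q|\omega^q]$ is handled by tracking the asymptotics of harmonic forms under the degeneration. The cusp torsion $T_{\mathrm{reg}}(F_X,\pa F_X;E)$ never appears; indeed, Corollary~\ref{ff.2} recovers a formula for it only \emph{a posteriori} by comparing with Pfaff.

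The gap in your sketch is precisely the step you compress into ``careful bookkeeping of logarithmic gamma values produces exactly $\tfrac12\log\tau(Z,E,\mu_Z)+\kappa^\varrho_\Gamma c_\varrho$.'' This is the hard part, and it is the term Pfaff could not evaluate. The weight decomposition exists, but the regularized determinants on the half-line with relative boundary conditions, coupled to the non-even metric $h_E$, are not obviously reducible to standard gamma identities; you would also need to show that the scattering interface term and the Br\"uning--Ma boundary anomaly combine in exactly the right way. The paper's surgery method sidesteps all of this: the model operator $D_b$ lives on all of $\bbR$, its indicial family is controlled by Kostant's theorem, and the determinants reduce to the one-variable computations in \cite{ARS2}. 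Your approach could conceivably be made to work, but as written it asserts the outcome of the central computation rather than performing it.

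For the $n$ odd simplification, the paper's argument is not a $\vartheta$-involution parity argument on $H^*(Z;E)$ but rather self-duality of $\varrho$ (which holds precisely when $n$ is odd), combined with Poincar\'e and Milnor duality on the even-dimensional $Z$ to force $\tau(Z,E,\mu_Z)^2=1$.
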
 
\begin{remark}
In a companion paper \cite{MR2}, this formula is used to establish exponential growth of torsion in cohomology for various sequences of groups $\Gamma$ or representations $\varrho$.
\end{remark}

\begin{remark}
 When $G=\Spin(d,1)$, Assumption~\ref{dc.3b} is slightly more general then what the hypothesis \cite[(2.11)]{Pfaff2017} requires for the result of Pfaff.  This allows in particular for situations where $\kappa^{\varrho}_{\Gamma}=0$, in which case $H^*(\bX;E)$ and $H^*(Z;E)$ are trivial and our formula simplifies to   
$$
 \log T(X;E,g_X,h_E)= \log \tau(\bX,E)- \frac12 \log \tau(Z,E).
$$
\label{int.5}\end{remark} 
\begin{remark}
When $G=\Spin(3,1)\cong \SL(2,\bbC)$  and $X=\Gamma\setminus G /K$ is the complement of a hyperbolic knot, we know from \cite{MFP2014} that $\kappa^{\varrho}_{\Gamma}=0$ when $\varrho$ is an odd symmetric power of the standard representation of $\SL(2,\bbC)$.  Since $n=1$ is odd in this case, this means that the formula simplifies to
$$
 T(X;E,g_X,h_E)= \tau(\bX,E).
$$
\label{int.6}\end{remark} 
 \begin{remark}
 When $G=\Spin(3,1)\cong \SL(2,\bbC)$ and $\varrho$ is an even symmetric power of the standard representation, our formula agrees with the one obtained by Pfaff in \cite{Pfaff2014} for normalized analytic and Reidemeister torsions and yields the identity 
 \begin{equation}
 \frac{c(\ell)}{c(2)}= \frac{b(\ell)}{b(2)} \quad \mbox{for} \; \ell\ge 2,
\label{int.6b}\end{equation}
where 
\begin{equation}
\begin{gathered}
  b(\ell):= \frac{1}{2\ell+2}\prod_{k=-\ell}^{\ell-1}\left( \frac{\sqrt{(\ell+1)^2+\ell^2-k^2}-k-1}{\sqrt{(\ell+1)^2+\ell^2-(k+1^2)}-k} \right)^{\frac12}, \\
   c(\ell):= \frac{\prod_{j=1}^{\ell-1} (\sqrt{(\ell+1)^2+\ell^2-j^2}+\ell)  }{\prod_{j=1}^{\ell} (\sqrt{(\ell+1)^2+\ell^2-j^2}+\ell+1) }\left( \frac{\sqrt{(\ell+1)^2+\ell^2}+\ell }{\sqrt{(\ell+1)^2+\ell^2}+\ell+1 } \right)^{\frac12}.
\end{gathered}
\label{int.6c}\end{equation}

 \label{int.7}\end{remark}
 
 \begin{remark}
Multiplying the boundary defining function by a constant changes the bases $\mu_Z$ and $\mu_{\bX}$ as well as the right hand side of  \eqref{int.3d}, which is consistent with the fact that analytic torsion does depend on the choice of boundary defining function used to  define the regularized trace.  
 \end{remark}
 
 Combined with \cite[Theorem~1.1]{MP2012}, our results yield the following corollary, proved at the end of \S~\ref{cm.0}, about the exponential growth of the Reidemeister torsion for certain sequences of representations.
 
 \begin{corollary}
 Assume that $G=\SO_0(d,1)$, that $n$ is odd and  that $\Gamma$ satisfies Assumption~\ref{dc.3b}.  Fix natural numbers $\tau_1\ge \tau_2\ge\cdots \ge \tau_{n+1}$.  For $m\in \bbN$, let $\tau(m)$ be the finite-dimensional irreducible representation of $G$ with highest weight $(\tau_1+m,\ldots, \tau_{n+1}+m)$ and denote by $E_{\tau(m)}$ the corresponding flat vector bundle on $X=\Gamma\setminus G/K$.  Let also $\mu_{X,m}$ be the corresponding basis of $H^*(\bX;E_{\tau(m)})$.  Then there is a constant $C_n>0$ depending only on $n$ such that 
 \begin{equation}
       \tau(\bX,E_{\tau(m)},\mu_{X,m})= C_n\vol(X) m^{\frac{n(n+1)}2+1} +\mathcal{O}(m^{\frac{n(n+1)}2}\log m) \quad \mbox{as} \; m\to \infty.
 \label{int.7c}\end{equation}
 \label{int.7b}\end{corollary}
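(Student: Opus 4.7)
The strategy is to turn the Reidemeister torsion asymptotic \eqref{int.7c} into an analytic torsion asymptotic via Theorem~\ref{int.4} and then quote the M\"uller--Pfaff result \cite[Theorem~1.1]{MP2012}, absorbing the boundary correction into the error term.

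First I would apply Theorem~\ref{int.4} (whose hypotheses are in force because $n$ is odd and $\Gamma$ satisfies Assumption~\ref{dc.3b}) to rewrite, for every $m\in\bbN$,
$$\log\tau(\bX,E_{\tau(m)},\mu_{X,m})=\log T(X;E_{\tau(m)},g_X,h_{E_{\tau(m)}})+\kappa^{\tau(m)}_\Gamma\,c_{\tau(m)}.$$
Next, I would invoke \cite[Theorem~1.1]{MP2012} applied to the sequence of irreducible representations $\tau(m)$ of $G=\SO_0(d,1)$ whose highest weights run along the ray $(\tau_1,\ldots,\tau_{n+1})+m(1,\ldots,1)$; in the odd-$n$ setting, this gives an asymptotic of the form
$$\log T(X;E_{\tau(m)},g_X,h_{E_{\tau(m)}})=C_n\vol(X)\,m^{\frac{n(n+1)}2+1}+\mathcal{O}\bigl(m^{\frac{n(n+1)}2}\log m\bigr),$$
with $C_n>0$ depending only on $n$.

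Combining the two displays, it remains to verify that $\kappa^{\tau(m)}_\Gamma c_{\tau(m)}=\mathcal{O}(m^{n(n+1)/2}\log m)$, so that it is absorbed into the existing error term. The factor $\kappa^{\tau(m)}_\Gamma$ is bounded, uniformly in $m$, by the number of connected components of $Z$. For the constant $c_\varrho$ itself, one returns to the explicit formula supplied by the detailed version of Theorem~\ref{int.4} (\ie Theorem~\ref{cm.11c}): it is built out of Weyl-dimensional data, Plancherel factors and special values associated to $\varrho$, each of which is readily estimated along the ray by means of Weyl's dimension formula $\dim V_{\tau(m)}=\mathcal{O}(m^{n(n+1)/2})$, yielding at worst $c_{\tau(m)}=\mathcal{O}(m^{n(n+1)/2}\log m)$.

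The main obstacle lies in this last step: whereas the reduction via Theorem~\ref{int.4} and the quotation of \cite[Theorem~1.1]{MP2012} are essentially bookkeeping, one must trace the explicit shape of $c_\varrho$ through the body of the paper and verify that each of its ingredients grows no faster than $m^{n(n+1)/2}\log m$ as $m\to\infty$ along the fixed ray. Once this polynomial control is in hand, \eqref{int.7c} follows by straightforward combination of the three displayed asymptotics above.
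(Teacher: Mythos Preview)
Your proposal is correct and follows essentially the same route as the paper: rewrite $\log\tau$ via the main formula (the paper uses Corollary~\ref{ff.1}, the $n$-odd specialization of Theorem~\ref{cm.11c}), invoke \cite[Theorem~1.1]{MP2012} for the analytic torsion, and absorb the defect into the error via Weyl's dimension formula. The only refinement the paper adds is to make your last paragraph concrete: the defect is $-\tfrac{\kappa_\Gamma}{2}(\alpha_{\tau(m)}+\beta_{\tau(m)})$ (not ``Plancherel factors''), and each piece is bounded by combining $\dim\cH^*(\mathfrak n;\tau(m))=\mathcal O(m^{n(n+1)/2})$ and $\dim(\ker K_P)^\perp=\mathcal O(m^{n(n+1)/2})$ with the fact that $\lambda_{\tau(m),q}$ and the weights $w_i$ grow like $m$, so the logarithms in \eqref{ff.1c} and \eqref{cm.11f} contribute $\mathcal O(\log m)$.
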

 \begin{remark}
If in fact $n=1$ and $G=\SL(2,\bbC)$, a formula similar to \eqref{int.7c} was obtained by Menal-Ferrer and Porti in \cite{MFP2014} using \cite{Muller2012} and  suitable approximations of $X$ by compact hyperbolic manifolds.   
 \label{int.7d}\end{remark}
 
Our strategy to prove Theorem~\ref{int.4} is to apply the general approach of \cite{ARS1}.  Indeed, even if \cite[Theorem~1.3]{ARS1} does not apply since $h_E$ is not an even Hermitian metric, the results of \cite{ARS1} concerning the uniform constructions of the resolvent and heat kernel under a degeneration to fibered cusps are formulated quite generally and do apply.  This is because the Hermitian metric $h_E$ degenerates at infinity in a similar way that $g_X$ does, which ensures that this can be incorporated in the framework of \cite{ARS1}.  More precisely, if $M= \bX\cup_{\pa\bX}\bX$ is the double of $\bX$ on which we consider a family $g_{\epsilon}$ of Riemannian metrics degenerating to the hyperbolic metric on each copy of $X$ inside $M$ as $\epsilon \searrow 0$, then recall from \cite{ARS1} that in a tubular neighborhood $N\cong \pa \bX\times (-\delta,\delta)_x$, we can take  $g_{\epsilon}$  of the form
$$
      g_{\epsilon}= \frac{dx^2}{x^2+\epsilon^2}+ (x^2+\epsilon^2) g_{\pa \bX}, \quad x\in (-\delta,\delta),
$$ 
where $g_{\pa \bX}$ is the (flat) Riemannian metric on $\pa\bX$ such that 
$$
g_X= \frac{dx^2}{x^2}+ x^2 g_{\pa \bX}, \quad  x\in (0,\delta),
$$
outside a compact set of $X$.  For the Hermitian metric $h_{E}$, we can in a similar way introduce a family of Hermitian metrics $h_{\epsilon}$ on the double of $E$ on $M$ degenerating to the Hermitian metric $h_E$ on each copy of $X$ as $\epsilon\searrow 0$.  This can be described in a systematic way using the single surgery space of Mazzeo-Melrose \cite{mame1}.  The upshot is that we end up with a family of Dirac-type operators for which the uniform constructions of the resolvent and heat kernel of \cite[Theorem~4.5 and Theorem~7.1]{ARS1} do apply directly.  The way the Hermitian metric $h_E$ degenerates is then incorporated in the model operators $D_v$ and $D_b$ of \cite{ARS1}, \cf \cite[(2.7) and (2.8)]{ARS1} with \eqref{sm.15} and \eqref{sm.17} below.  Taking this into account, we can still show that the model operator $D_b$ is Fredholm, which ensures that the eigenspace of eigenvalues of the Hodge Laplacian going to zero as $\epsilon\searrow 0$ is finite dimensional.  Furthermore, comparing the cohomology of $M$ taking values in the double of $E$ with the $L^2$-kernel of $D_b$ allows us to conclude that there is a spectral gap: no positive eigenvalue tends to zero as $\epsilon\searrow 0$.  This greatly simplifies the computation of the asymptotic behavior of analytic torsion as $\epsilon\searrow 0$, since this amounts essentially to compute the analytic torsion of the operator $D^2_b$.  To do this, we rely on the delicate computation in \cite[\S~2.2]{ARS2} of regularized determinants of Laplace-type operators on the real line.  

For Reidemeister torsion, we can track relatively easily what happens under surgery using the formula of Milnor \cite{Milnor1966}.  What is more delicate however is that as in \cite[\S3.3]{ARS2}, we need to carefully compute what happens asymptotically to a basis of orthonormal harmonic forms as $\epsilon\searrow 0$.  One subtle point is that to understand what happens at the cohomological level, it is not enough to determine the top order behavior.  Indeed, there are lower other terms in the expansion which are negligible in terms of $L^2$-norm as $\epsilon\searrow 0$, but nevertheless contribute non-trivially cohomologically, a phenomenon intimately related with the behavior at infinity of the Eisenstein series used by Pfaff in \cite{Pfaff2017}.              
 
 The paper is organized as follows.  In \S~\ref{cb.0}, we recall basic properties of the canonical bundle $E$ associated to a choice of irreducible complex representation $\varrho$. We then describe in \S~\ref{sm.0} the cusp surgery metric and the corresponding degenerating family of Hermitian metrics and compute explicitly what are the model operators $D_v$ and $D_b$.  This is used in \S~\ref{dat.0}  to determine the asymptotic behavior of analytic torsion as $\epsilon\searrow 0$.  In \S~\ref{be.0}, we introduce the basis $\mu_X$ and $\mu_Z$ of Theorem~\ref{int.4} and give a formula relating the Reidemeister torsions of $M$ and $\bX$.  Finally, we prove our main result in \S~\ref{cm.0}, while in \S~\ref{ed3.0}, we compute more precisely the constant $c_{\varrho}$ when $G=\Spin(3,1)\cong \SL(2,\bbC)$.
 
\begin{acknowledgements}
The authors are grateful to the hospitality of the Centre International de Rencontres Mathématiques (CIRM) where this project started.  The second author was supported by NSERC and a Canada Research Chair. 
\end{acknowledgements}

\section{The canonical bundle of Matsushima and Murakami}  \label{cb.0}

Let $d=2n+1$ be odd and consider the $d$-dimensional hyperbolic space seen as the homogeneous space $\bbH^d= G/K$ with either $G=\Spin(d,1)$ and $K=\Spin(d)$ or 
$G=\SO_o(d,1)$ and $K=\SO(d)$.  The hyperbolic metric $g_{\hyp}$ on $\bbH^d$ can be described in terms of the Killing form $B$ of the Lie algebra $\mathfrak{g}$ of $G$.
Indeed, if $\mathfrak{k}$ is the Lie algebra of $K$, $\vartheta$ is the standard Cartan involution with respect to $K$ and $\mathfrak{g}= \mathfrak{k}\oplus \mathfrak{p}$ is the Cartan decomposition of $\mathfrak{g}$, then the restriction of 
\begin{equation}
   \langle X,Y \rangle_{\vartheta}:= -\frac{1}{2(d-1)} B(X,\vartheta(Y)), \quad X,Y\in \mathfrak{g}
\label{cb.1}\end{equation}
to $\mathfrak{p}$ induces a $G$-invariant metric on $\bbH^d$ which is precisely the hyperbolic metric $g_{\hyp}$.  Suppose now that $(X,g_X)$ is a complete finite volume oriented hyperbolic manifold of dimension $d$ such that    
\begin{equation}
   X= \Gamma \setminus \bbH^d = \Gamma\setminus G/K
\label{cb.2}\end{equation}
for some discrete subgroup $\Gamma$ of $G$, so that the fundamental group $\pi_1(X)$ is naturally identified with $\Gamma$ and the metric $g_X$ on $X$ lifts to give the hyperbolic metric $g_{\hyp}$ on $\bbH^d$.  Let $\varrho: G\to \GL(V)$ be an irreducible representation on a complex vector space $V$ of complex dimension $k$.  Then the restriction of $\varrho$ to $\Gamma$ induces a unimodular representation of the fundamental group of $X$, which in turn induces a flat vector bundle 
\begin{equation}
   E:= \bbH^d\times_{\left.\varrho\right|_{\Gamma}} V \quad \mbox{on} \; X.
\label{cb.3}\end{equation}
We can instead restrict $\varrho$ to the maximal compact subgroup $K$ and consider the associated homogeneous vector bundle 
\begin{equation}
  \tE:= G\times_{\left.  \varrho\right|_K}V\quad \mbox{on} \quad \bbH^d= G/K
\label{cb.4}\end{equation}
and the corresponding locally homogeneous vector bundle $\Gamma\setminus \tE$ over $X=\Gamma\setminus \bbH^d$.  The space of smooth sections $\CI(\bbH^d;\tE)$ of $\tE$ is canonically identified with 
\begin{equation}
\CI(G;\varrho):= \left\{ f\in \CI(G;V)\; | \; f(gk)= \varrho(k^{-1})f(g)\; \forall g\in G, \ \forall k\in K  \right\}.
\label{cb.5}\end{equation}
Similarly, the space of smooth sections of $\CI(X; \Gamma\setminus \tE)$ is canonically isomorphic to 
\begin{equation}
  \CI(\Gamma\setminus G;\varrho):= \left\{ f\in \CI(G;\varrho) \; | \; f(\gamma g)= f(g) \; \forall g\in G, \ \forall \gamma\in \Gamma \right\}.
\label{cb.6}\end{equation}

From \cite[Proposition~3.1]{MM1963}, we know that there is a canonical vector bundle isomorphism 
\begin{equation}
      \Phi: \Gamma\setminus \tE \to E 
\label{cb.7}\end{equation}
explicitly given by 
\begin{equation}
\begin{array}{lccc}
  \Phi: &  \Gamma\setminus G\times_{\left.\varrho\right|_{K}} V & \to & G/K \times_{\left.  \varrho\right|_{\Gamma}}V  \\
          &  [g,v]_{\Gamma \setminus \tE} & \mapsto & [g, \varrho(g)v]_E,
\end{array}
\label{cb.8}\end{equation}
where $[g,v]_{\Gamma\setminus \tE}$ and $[g,v]_E$ denote the corresponding points in $\Gamma\setminus \tE$ and $E$ after taking the quotient by the actions of $\Gamma$ and $K$.  Using this natural isomorphism, we can equip $E$ with a canonical bundle metric.  More precisely, by \cite[Lemma~3.1]{MM1963}, there exists an inner product 
$\langle \cdot, \cdot\rangle$ on $V$ such that 
\begin{gather}
\label{cb.9a} \langle \varrho(Y)u,v\rangle= -\langle u, \varrho(Y) v\rangle\;  \forall Y\in \mathfrak{k}, \;  \forall u,v\in V; \\
\label{cb.9b} \langle \varrho(Y)u,v\rangle= \langle u, \varrho(Y) v\rangle\;  \forall Y\in \mathfrak{p}, \;  \forall u,v\in V.
\end{gather}   
  Clearly, $\left. \varrho\right|_K$ is unitary with respect to this inner product, which means that $\langle \cdot, \cdot \rangle$ induces a  bundle metric $h_{\Gamma\setminus \tE}$ on $\Gamma\setminus \tE$,
\begin{equation}
  h_{\Gamma\setminus \tE}\left( [g,v]_{\Gamma\setminus \tE}, [g,w]_{\Gamma\setminus \tE} \right):= \langle v,w\rangle,
\label{cb.10}\end{equation}
and hence a corresponding bundle metric $h_E$ on $E$ via the isomorphism \eqref{cb.7}.  This bundle metric $h_E$ and the flat connection on $E$ allow to define a de Rham operator and a Hodge Laplacian,
\begin{equation}
  \eth_E= d_E+ d_E^*, \quad \Delta_E= \eth_E^2= d_E d_E^* + d_E^* d_E,
\label{cb.11}\end{equation}
where $d_E: \Omega^*(X;E)\to \Omega^{*+1}(X;E)$ is the exterior derivative acting on differential forms taking values in $E$ and $d_E^*$ is its formal adjoint with respect to the $L^2$-inner product induced by $g_X$ and $h_E$.

Let $G=NAK$ be the Iwasawa decomposition of $G$ as in \cite[\S2]{MP2012} and let $M$ be the centralizer of $A$ in $K$.  Let $\mathfrak{n}, \mathfrak{a}$ and $\mathfrak{m}$ be the Lie algebras of $N$, $A$ and $M$ respectively.  Consider the group $P_0:= NAM$.  Recall that $\dim_{\bbR}\mathfrak{a}=1$.  Equip $\mathfrak{a}$ with the norm induced by the restriction of \eqref{cb.1}.  Let $H_1$ be the unique vector of norm $1$ such that the positive restricted root, implicit in the choice of $N$, is positive on $H_1$.  Then every $a\in A$ can be written as $a=\exp(\log a)$ for a unique $\log a\in \mathfrak{a}$, where $\exp: \mathfrak{a}\to A$ is the exponential map.  For $t\in \bbR$, set $a(t):= \exp(t H_1)$.  Given $g\in G$, we define $n(g)\in N$, $H(g)\in \bbR$ and $\kappa(g)\in K$ by
$$
      g= n(g)a(H(g))\kappa(g).
$$
If $P$ is a parabolic subgroup of $G$, then there is $k_P\in K$ such that $P=N_PA_PM_P$ with $N_P= k_P Nk_P^{-1}$, $A_P= k_P A k_P^{-1}$ and $M_P= k_P M k_P^{-1}$.  For instance, for $P=P_0$, we can take $k_{P_0}=1$.  For such a choice of $k_P$, let $a_P(t)= k_Pa(t) k_P^{-1}$.  For $g\in G$, define $n_P(g)\in N_P$, $H_P(g)\in \bbR$ and $\kappa_P(g)\in K$ by
$$
    g= n_P(g) a_P(H_P(g))\kappa_P(g)
$$
and consider the group isomorphism 
\begin{equation}
\begin{array}{lccc}
  R_P: & \bbR^+ & \to & A_P \\
               & t & \mapsto & a_P(\log t).
\end{array}
\label{dc.1}\end{equation}
We set $A^0_P[Y]:= R_P(Y,\infty)$ for $Y>0$.

\begin{definition}
A parabolic subgroup $P$ of $G$ is said to be \textbf{$\Gamma$-cuspidal} if $\Gamma\cap N_P$ is a lattice in $N_P$. 
Let $\mathfrak{P}_{\Gamma}$ be a set of representatives of the 
$\Gamma$-conjugacy classes of $\Gamma$-cuspidal parabolic subgroups of $G$.
\label{dc.2}\end{definition}  
Note that $\mathfrak{P}_{\Gamma}$ is a finite set with cardinality equal to 
the number of cusps of $X$.

\begin{assumption}
When $G=\SO_o(d,1)$, we will assume that for all $P\in\mathfrak{P}_{\Gamma}$ we
have
\begin{equation} 
  \Gamma \cap P= \Gamma\cap N_P,
\label{dc.3}\end{equation}
while when $G=\Spin(d,1)$, we will be more flexible and only assume that for 
all $P\in\mathfrak{P}_{\Gamma}$ we have
\begin{equation} 
  \pi(\Gamma) \cap \pi(P)= \pi(\Gamma)\cap \pi(N_P),
\label{dc.3c}\end{equation}
where $\pi: \Spin(d,1)\to \SO_o(d,1)$ is the canonical covering map.  Furthermore, when $G=\Spin(d,1)$ and \eqref{dc.3} does not hold for all $\Gamma$-cuspidal groups, we will also assume that 
\begin{equation}
     \varrho(e_{-1}) = \pm\Id,
\label{dc.3d}\end{equation}
where $e_{-1}\in \Spin(d,1)$ denotes the element different from the identity such that $\pi(e_{-1})$ gives the identity element in $\SO_0(d,1)$.  
\label{dc.3b}\end{assumption}
For a choice of $\mathfrak{P}_{\Gamma}$, there exists $Y_0>0$ such that for each $Y\ge Y_0$, there is a compact connected subset $C(Y)$ of $G$ and a decomposition 
\begin{equation}
 G= \Gamma\cdot C(Y) \sqcup \bigsqcup_{P\in \mathfrak{P}_{\Gamma}} \Gamma\cdot N_PA^0_P[Y]K
\label{dc.4}\end{equation}
such that for each $P\in \mathfrak{P}_{\Gamma}$, one has that 
\begin{equation}
  \left(\gamma\cdot N_P A^0_P[Y]K\right) \cap N_P A^0_P[Y]K \ne \emptyset \;  \Longleftrightarrow \; \gamma\in \Gamma\cap P.
\label{dc.5}\end{equation}
Set $\Gamma_P:= \Gamma\cap N_P= \Gamma\cap P$ when $G=\SO_o(d,1)$ and $\Gamma_P:=\pi(\Gamma)\cap\pi(N_P)=\pi(\Gamma)\cap \pi(P)$ when $G=\Spin(d,1)$. In the latter case, notice that $N_P$ is canonically identified with $\pi (N_P)$ via the canonical covering map, so that we will often denote $\pi(N_P)$ by $N_P$ to lighten the notation.  With this notation understood, if we set 
\begin{equation}
F_P(Y):= A^0_P[Y] \times \left( \Gamma_P\setminus N_P  \right) \cong (Y, \infty)\times \left( \Gamma_P\setminus N_P \right),
\label{dc.6}\end{equation}
then there is a corresponding decomposition of $X$, namely, there exists a compact manifold with smooth boundary $X(Y)$ such that 
\begin{equation}
X= X(Y) \sqcup \bigsqcup_{P\in \mathfrak{P}_{\Gamma}} F_P(Y)
\label{dc.7}\end{equation}
with $X(Y)\cap \overline{F_P(Y)}= \pa X(Y)= \pa F_P(Y)$ and  $\overline{F_{P_1}(Y)}\cap\overline{F_{P_2}(Y)}=\emptyset$ for $P_1\ne P_2$ in $\mathfrak{P}_{\Gamma}$.  
If $g_{P}$ is the invariant metric on $N_P$ induced by \eqref{cb.1} and $g_{T_P}$ is the corresponding metric on the quotient $T_P= \Gamma_P\setminus N_P$, then the restriction of the hyperbolic metric on $F_P(Y)$ is given by 
\begin{equation}
 \frac{dt^2 + g_{T_P}}{t^2}, \quad t\in (Y,\infty).
\label{dc.8}\end{equation} 
Since $N_P$ is abelian, notice that $g_{P}$ and $g_{T_P}$ are flat.  Since the hyperbolic metric is $G$-invariant, notice also that this description is consistent with the adjoint action of $A_P$ on the Lie algebra $\mathfrak{n}_P$ of $N_P$,
\begin{equation}
  \Ad(a_P(r))\eta= e^r \eta, \quad \eta\in \mathfrak{n}_P,  \quad  \Ad^*(a_P(r))\mu= e^{-r} \mu, \quad \mu\in \mathfrak{n}_P^*.
\label{dc.9}\end{equation}
By condition \eqref{cb.9b}  and the fact that $\mathfrak{a}\subset \mathfrak{p}$, there exists a basis $\{v_{P,1},\ldots, v_{P,k}  \}$ of $V$ orthonormal with respect to the admissible inner product $\langle\cdot, \cdot\rangle$ which is compatible with the weight decomposition of $V$ in terms of the action of $A_P$.  Thus, we suppose that 
\begin{equation}
\varrho(a_P(r)) v_{P,i}= e^{w_i r}v_{P,i} \quad \mbox{for some} \ w_i\in \bbR.  
\label{dc.10}\end{equation}
\begin{remark}
Since $A_P= k_PA k_P^{-1}$, we can assume that the weight $w_i$ does not depend on $P$.  
\end{remark}
If \eqref{dc.3} holds or $\varrho(e_{-1})=\Id$, let $\Xi_P: N_P\to \{1\}$ be the trivial group homomorphism, and otherwise let $\Xi_P: \pi(N_P)\to \bbS^1\subset \bbC^*\subset \End(V)$ be a choice of group homomorphism such that $\Xi_P(\pi(\gamma))\Id=\varrho(\kappa_P(\gamma))$ for all $\gamma\in \Gamma\cap P$.  In particular, by \eqref{dc.3d}, it is such that 
$\Xi(\Gamma_P)\subset \{1,-1\}$ and $\varrho(\gamma)= \varrho(n_P(\gamma))\Xi_P(\pi(\gamma))$ for all $\gamma\in \Gamma\cap P$.  Notice that using the decomposition $G=N_PA_P K$, the basis $\{v_{P,1},\ldots, v_{P,k}\}$ yields an orthonormal basis of sections 
$$
g\mapsto [n_P(g)a_P(H_P(g)), \Xi_P(\pi(n_P(g)))v_{P,i}]_{\Gamma\setminus \tE}
$$
 of $\Gamma\setminus \tE$ over the cusp $F_P(Y)$, and hence under the isomorphism \eqref{cb.7}, an orthonormal basis of sections 
\begin{equation}
   \nu_{P,i}(gK):= [n_P(g)a_P(H_P(g)), \Xi_P(\pi(n_P(g)))\varrho(n_P(g)a_P(H_P(g)))v_{P,i}]_E \quad \mbox{of} \ E \; \mbox{over} \; F_P(Y).
\label{dc.11}\end{equation}

Let $\varrho_P: N_P\to \End(V)$ be the restriction of $\varrho$ to $N_P$ twisted by $\Xi_P$, so that 
$$
         \varrho_P(n)= \Xi_P(\pi(n)) \varrho(n) \quad \forall  n\in N_P.
$$
The representation $\varrho_P$ defines by restriction to $\Gamma_P$ a flat vector bundle $E_P:= N_P\times_{\left. \varrho_P\right|_{\Gamma_P}} V$ on $\pa F_P(1)=\Gamma_P\setminus N_P=T_P$ and the basis $\{v_{P,i}\}$ induces a basis of sections  
\begin{equation}
   \nu_{N_P,i}(\Gamma_P n):= [n, \varrho_P(n) v_{P,i}]_{E_P}.
\label{dc.12}\end{equation}
The admissible product $\langle\cdot, \cdot \rangle$ naturally induces a bundle metric $h_{E_P}$ on $E_P$, namely the one obtained by declaring $\{\nu_{N_P,i}\}$ to be an orthonormal basis of sections.  By our choice of basis $\{v_{P,i}\}$, notice that on $F_P(Y)$, we have the following relation,
\begin{equation}
\nu_{P,i}(\Gamma n a_P(r))= e^{w_i r} \nu_{N_P,i}(\Gamma_P n) \quad \forall n\in N_P.
\label{dc.13}\end{equation}
Let 
\begin{equation}
    \eth_{E_P}= d_{E_P}+ d_{E_P}^*\quad \mbox{and} \quad \Delta_{E_P}= \eth_{E_P}^2
\label{dc.15}\end{equation}
be the corresponding de Rham operator and Hodge Laplacian, where $d_{E_P}: \Omega^*(T_P;E_P)\to \Omega^{*+1}(T_P;E_P)$ is the exterior derivative and $d^*_{E_P}$ is its formal adjoint with respect to the $L^2$-inner product induced by $g_{T_P}$ and $h_{E_P}$.  There is a natural inclusion 
\begin{equation}
\begin{array}{lccc}
\iota_P: & \Lambda^p\mathfrak{n}^*_P\otimes V & \hookrightarrow & \Omega^p(T_P;E_P) \\ 
             & \omega\otimes v & \mapsto & \hat{\omega}\otimes \hat{v},
\end{array}
\label{dc.16}\end{equation}
where $\hat{v}(\Gamma_P n):= [n,\varrho_P(n)v]_{E_P}$ and $\left. \hat{\omega}\right|_{\Gamma_P n}= \omega$ under the natural identification 
\begin{equation}
      \Lambda^p(T^*(T_P))= T_P\times \Lambda^p\mathfrak{n}^*_P.
\label{dc.17}\end{equation}
In fact , $\Lambda^*\mathfrak{n}^*_P\otimes V$ is a $V$-valued Lie algebra complex with differential  $d_P=d_{\mathfrak{n}_P}+ d_{\Xi_P}$ given by
\begin{gather}\label{dc.18}d
_{\mathfrak{n}_P} \Phi(T_1,\ldots, T_{q+1})= \sum_{i=1}^{q+1} (-1)^{i+1}\varrho(T_i)\Phi(T_1,\ldots, \hat{T}_i, \ldots, T_{q+1}),  \quad \Phi\in \Lambda^q \mathfrak{n}^*_P\otimes V, \\
d_{\Xi_P} \Phi(T_1,\ldots, T_{q+1})= \sum_{i=1}^{q+1} (-1)^{i+1}\Xi_P(T_i)\Phi(T_1,\ldots, \hat{T}_i, \ldots, T_{q+1}),  \quad \Phi\in \Lambda^q \mathfrak{n}^*_P\otimes V,
\label{dc.18b}\end{gather}
where the ``$\ \hat{}\ $" above a variable denotes omission.  
Of course, the map \eqref{dc.18} is also a differential inducing another Lie algebra complex structure on $\Lambda^*\mathfrak{n}^*_P\otimes V$, in fact the same whenever $\Xi_P$ is the trivial homomorphism.

Now, the inner product \eqref{cb.1} and the admissible inner product $\langle\cdot, \cdot\rangle$ on $V$ induce an inner product on $\Lambda^q\mathfrak{n}^*_P\otimes V$ for each $q$.  Let $d^*_{\mathfrak{n}_P}: \Lambda^*\mathfrak{n}_P^*\otimes V\to  \Lambda^{*-1}\mathfrak{n}_P^*\otimes V$ be the adjoint of $d_{\mathfrak{n}_P}$ with respect to this inner product.  Following Kostant \cite{Kostant}, we can consider the corresponding de Rham and Hodge operators 
\begin{equation}
K_P:= d_{\mathfrak{n}_P}+ d^*_{\mathfrak{n}_P}, \quad L_P:= K_P^2= d_{\mathfrak{n}_P}d^*_{\mathfrak{n}_P}+ d_{\mathfrak{n}_P}^*d_{\mathfrak{n}_P}
\label{dc.19}\end{equation}
and identify the Lie algebra cohomology $H^*(\mathfrak{n}_P;V)$ induced by the differential $d_{\mathfrak{n}_P}$ of \eqref{dc.18} with the kernel of $L_P$,
\begin{equation}
H^q(\mathfrak{n}_P;V)\cong \cH^q(\mathfrak{n}_P;V):= \left\{ \Phi\in \Lambda^q\mathfrak{n}_P^*\otimes V \; | \; L_P \Phi=0  \right\}.
\label{dc.20}\end{equation}
On $\Lambda^q\mathfrak{n}_P^*\otimes V$, there is also a natural action of $A_P$ an its Lie algebra $\mathfrak{a}_P$ induced by 
\begin{equation}
\Lambda^q\Ad^*\otimes \varrho (H_P)\Phi(T_1,\ldots,T_q)= \varrho(H_P)\Phi(T_1,\ldots,T_q)- \sum_{i=1}^q \Phi(T_1,\ldots, [H_P,T_i],\ldots, T_q)
\label{dc.21}\end{equation}
for $\Phi \in \Lambda^q\mathfrak{n}_P^*\otimes V$ and $H_P:= k_P H_1 k_P^{-1}\in \mathfrak{a}_P$.  
\begin{proposition}
The operators $d_{\mathfrak{n}_P}$ and $d_{\mathfrak{n}_P}^*$ are equivariant with respect to the actions of $A_P$ and $\mathfrak{a}_P$.
\label{dc.22}\end{proposition}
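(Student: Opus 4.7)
The plan is to reduce to a direct computation at the Lie algebra level and then exponentiate.  Two preliminary observations simplify everything.  First, by \eqref{dc.9} we have $[H_P,T]=T$ for every $T\in\mathfrak{n}_P$, so the sum in \eqref{dc.21} collapses to $q\,\Phi(T_1,\ldots,T_q)$ and the infinitesimal action of $H_P$ on $\Lambda^q\mathfrak{n}_P^*\otimes V$ reduces to the operator $L_{H_P}:=\Id\otimes(\varrho(H_P)-q\,\Id)$, which acts purely on the $V$-factor.  Second, since $H_P=\Ad(k_P)H_1$ lies in $\mathfrak{p}$ (as $H_1\in\mathfrak{a}\subset\mathfrak{p}$ and $\Ad(K)$ preserves $\mathfrak{p}$), the operator $\varrho(H_P)$ is self-adjoint on $V$ by \eqref{cb.9b}; combined with the trivial self-adjointness of the scalar $-q\,\Id$, this makes $L_{H_P}$ self-adjoint on each graded piece of $\Lambda^*\mathfrak{n}_P^*\otimes V$.

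Commutation of $d_{\mathfrak{n}_P}$ with $L_{H_P}$ is then a short direct calculation from \eqref{dc.18}: using $\varrho(T_i)\varrho(H_P)=\varrho(H_P)\varrho(T_i)-\varrho([H_P,T_i])=\varrho(H_P)\varrho(T_i)-\varrho(T_i)$, one slides $\varrho(H_P)$ past each $\varrho(T_i)$ to obtain
\begin{equation*}
d_{\mathfrak{n}_P}(L_{H_P}\Phi)\;=\;\bigl(\varrho(H_P)-(q+1)\,\Id\bigr)\,d_{\mathfrak{n}_P}\Phi\;=\;L_{H_P}(d_{\mathfrak{n}_P}\Phi),
\end{equation*}
the shift from $-q$ to $-(q+1)$ being precisely the $-\varrho(T_i)$ contribution produced by each commutator.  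Taking the Hermitian adjoint of $d_{\mathfrak{n}_P}L_{H_P}=L_{H_P}d_{\mathfrak{n}_P}$ and using self-adjointness of $L_{H_P}$ on both $\Lambda^q\mathfrak{n}_P^*\otimes V$ and $\Lambda^{q+1}\mathfrak{n}_P^*\otimes V$ yields the corresponding relation for $d_{\mathfrak{n}_P}^*$.  Since $A_P\cong\bbR$ is connected with Lie algebra $\mathfrak{a}_P=\bbR H_P$, it is the one-parameter group generated by $H_P$, and commutation with the infinitesimal generator upgrades to commutation with every $a_P(r)$.

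The only delicate point is the self-adjointness of $L_{H_P}$ invoked in the second step.  The $A_P$-action on $\Lambda^q\mathfrak{n}_P^*\otimes V$ is \emph{not} by isometries, since the induced action on $\Lambda^q\mathfrak{n}_P^*$ is the non-unitary scalar $e^{-qr}$, so one cannot appeal to a general principle that the adjoint of an equivariant operator for a unitary representation is itself equivariant.  The argument nevertheless goes through because the non-isometric ingredient contributes only a real scalar, which is trivially self-adjoint; the genuinely operator-valued part $\varrho(H_P)$ is self-adjoint thanks to \eqref{cb.9b} and the fact that $H_P\in\mathfrak{p}$.  Everything else is routine bookkeeping in the Chevalley--Eilenberg complex.
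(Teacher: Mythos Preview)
Your proof is correct and follows essentially the same route as the paper's: verify directly that $d_{\mathfrak{n}_P}$ commutes with the infinitesimal action of $H_P$, observe that this action is self-adjoint (using \eqref{cb.9b} for the $\varrho(H_P)$ part and \eqref{dc.9} for the $\Lambda^q\Ad^*$ part), then take adjoints to handle $d_{\mathfrak{n}_P}^*$. Your version is more explicit than the paper's---in particular, your use of $[H_P,T]=T$ to collapse the action to $\varrho(H_P)-q\,\Id$ makes both the commutation check and the self-adjointness transparent---and your closing remark about why self-adjointness survives despite the non-unitarity of the $A_P$-action is a worthwhile clarification that the paper leaves implicit.
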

\begin{proof}
A direct computation shows that 
\begin{equation}
d_{\mathfrak{n}_P}\circ (\Lambda^*\Ad^*\otimes \varrho)(H_P)= (\Lambda^{*+1}\Ad^*\otimes \varrho)(H_P)\circ d_{\mathfrak{n}_P}.
\label{dc.23}\end{equation}
Now, by property \eqref{cb.9b} of the admissible product and \eqref{dc.9}, the operator $(\Lambda^q\Ad^*\otimes \varrho)(H_P)$ is self-adjoint, so taking the adjoint of \eqref{dc.23} gives
\begin{equation}
d^*_{\mathfrak{n}_P}\circ (\Lambda^*\Ad^*\otimes \varrho)(H_P)= (\Lambda^{*-1}\Ad^*\otimes \varrho)(H_P)\circ d^*_{\mathfrak{n}_P}.
\label{dc.24}\end{equation}
\end{proof}
\begin{corollary}
The operators $K_P$ and $L_P$ are equivariant with respect to the action of $A_P$ and $\mathfrak{a}_P$.
\label{dc.25}\end{corollary}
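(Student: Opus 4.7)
The plan is to derive this corollary directly from Proposition~\ref{dc.22} by exploiting the facts that $K_P$ and $L_P$ are built algebraically from $d_{\mathfrak{n}_P}$ and $d^*_{\mathfrak{n}_P}$, and that equivariance is preserved under sums and compositions of equivariant maps.

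More concretely, first I would view the action $\Lambda^*\Ad^* \otimes \varrho$ as acting on the total complex $\bigoplus_q \Lambda^q\mathfrak{n}_P^*\otimes V$ preserving the grading, while $K_P$ shifts the grading by $\pm 1$. For $\Phi \in \Lambda^q\mathfrak{n}_P^*\otimes V$, I would compute
\begin{equation*}
K_P \circ (\Lambda^q\Ad^*\otimes \varrho)(H_P)\Phi = d_{\mathfrak{n}_P}\bigl((\Lambda^q\Ad^*\otimes \varrho)(H_P)\Phi\bigr) + d^*_{\mathfrak{n}_P}\bigl((\Lambda^q\Ad^*\otimes \varrho)(H_P)\Phi\bigr)
\end{equation*}
and then invoke \eqref{dc.23} together with its adjoint \eqref{dc.24} to rewrite the right-hand side as $(\Lambda^{q+1}\Ad^*\otimes \varrho)(H_P)\, d_{\mathfrak{n}_P}\Phi + (\Lambda^{q-1}\Ad^*\otimes \varrho)(H_P)\, d^*_{\mathfrak{n}_P}\Phi$, which is precisely $(\Lambda^*\Ad^*\otimes \varrho)(H_P)\, K_P\Phi$. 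This yields $\mathfrak{a}_P$-equivariance of $K_P$, and then $\mathfrak{a}_P$-equivariance of $L_P = K_P^2$ follows because composing two maps commuting with the same operator still commutes with it.

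For the group $A_P$, since $\mathfrak{a}_P$ is one-dimensional and $A_P = \exp(\mathfrak{a}_P)$ is connected, the Lie algebra equivariance exponentiates to group equivariance; alternatively, one may repeat the same argument at the group level starting from the $A_P$-version of \eqref{dc.23} already established in Proposition~\ref{dc.22}. I do not anticipate any genuine obstacle here, as the corollary is essentially a formal consequence of the proposition; the only point worth checking carefully is that the grading shifts on the two pieces of $K_P$ are compatible, which is guaranteed by the fact that $(\Lambda^q\Ad^*\otimes \varrho)(H_P)$ acts degree-wise and the identities \eqref{dc.23}–\eqref{dc.24} hold in every degree $q$.
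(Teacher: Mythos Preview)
Your proposal is correct and follows exactly the intended route: the paper gives no separate proof because the corollary is immediate from Proposition~\ref{dc.22}, since $K_P = d_{\mathfrak{n}_P} + d_{\mathfrak{n}_P}^*$ and $L_P = K_P^2$ are built from equivariant pieces. Your write-up simply spells out this formal consequence in detail.
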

However, unless $\Xi_P$ is the trivial homomorphism, the operator $d_{\Xi_P}$ and its adjoint $d_{\Xi_P}^*$ are not equivariant with respect to the action of $A_P$ and $\mathfrak{a}_P$.  A direct computation using \eqref{dc.9} shows that we have instead 
\begin{equation}
 \begin{gathered}
            d_{\Xi_P}\circ (\Lambda^*\Ad^*\otimes \varrho)(H_P)-  (\Lambda^*\Ad^*\otimes \varrho)(H_P)\circ d_{\Xi_P}= d_{\Xi_P}, \\
            d_{\Xi_P}^*\circ (\Lambda^*\Ad^*\otimes \varrho)(H_P)-  (\Lambda^*\Ad^*\otimes \varrho)(H_P)\circ d^*_{\Xi_P}= -d^*_{\Xi_P}.
\end{gathered}
\label{dc.26}\end{equation}

\begin{lemma}[van Est's theorem]
If $\Xi_P$ is the trivial homomorphism, then the map \eqref{dc.16} is a map of complexes which induces an isomorphism in cohomology. If instead $\Xi_P$ is a non-trivial homomorphism, then $H^*(T_P;E_P)=\{0\}$.  
\label{dc.27}\end{lemma}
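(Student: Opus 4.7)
The plan is to first verify that $\iota_P$ is a chain map by direct computation, then treat the two cases separately: using van Est's theorem in the trivial setting and a direct vanishing argument via group cohomology in the non-trivial one.

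For the chain map property, I would lift the situation to the universal cover $N_P$, where the form $\iota_P(\omega\otimes v)$ corresponds to the $V$-valued form $\Omega_{\omega,v}(n) = \omega\cdot\varrho_P(n)v$, with $\omega$ viewed as a left-invariant form on $N_P$. Since $\mathfrak{n}_P$ is abelian, left-invariant forms are closed and $d\omega = 0$. For a left-invariant vector field $X$ with value $X_e \in \mathfrak{n}_P$, one has $X(\varrho_P(n)v) = \varrho_P(n)d\varrho_P(X_e)v$ where $d\varrho_P = d\varrho + d\Xi_P\cdot\Id$ on $\mathfrak{n}_P$. A computation with a left-invariant basis $\{X_i\}$ of $\mathfrak{n}_P$ and dual $1$-forms $\{X_i^*\}$ then yields
\begin{equation*}
d\Omega_{\omega,v} = \sum_i X_i^*\wedge\omega\otimes\varrho_P(n)\bigl(d\varrho(X_i)+d\Xi_P(X_i)\bigr)v,
\end{equation*}
which is exactly the pullback of $\iota_P(d_P(\omega\otimes v))$ for $d_P = d_{\mathfrak{n}_P}+d_{\Xi_P}$ as in \eqref{dc.18} and \eqref{dc.18b}.

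When $\Xi_P$ is trivial, so that $\varrho_P = \varrho|_{N_P}$, I would invoke the coefficient version of van Est's theorem for nilmanifolds. Since $N_P$ is the unipotent radical of a parabolic in the semisimple group $G$ and $\varrho$ is finite-dimensional, a standard fact from the representation theory of semisimple Lie algebras gives that $d\varrho|_{\mathfrak{n}_P}$ consists of nilpotent endomorphisms of $V$, so $\varrho|_{N_P}$ takes unipotent values. Van Est's theorem then identifies the image of $\iota_P$, interpreted as the subcomplex of left-invariant $V$-valued forms on $N_P$, as a quasi-isomorphic subcomplex of $\Omega^*(T_P;E_P)$, which yields the claimed cohomology isomorphism.

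When $\Xi_P$ is non-trivial, I would instead show directly that $H^*(T_P;E_P) = 0$. Since $T_P = \Gamma_P\setminus N_P$ is a $K(\Gamma_P,1)$, this reduces to $H^*(\Gamma_P;V) = 0$ with $\Gamma_P$ acting via $\varrho_P|_{\Gamma_P}$. By construction, non-triviality of $\Xi_P$ provides some $\gamma_0 \in \Gamma_P$ with $\Xi_P(\gamma_0)=-1$; since $\gamma_0\in N_P$, the operator $\varrho(\gamma_0) = \exp(d\varrho(\log\gamma_0))$ is unipotent, so $\varrho_P(\gamma_0) = -\varrho(\gamma_0)$ has all eigenvalues equal to $-1$, making $\Id-\varrho_P(\gamma_0)$ invertible on $V$. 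Replacing $\gamma_0$ by a primitive element of $\Gamma_P$ along its ray (which still satisfies $\Xi_P=-1$ since $\Xi_P$ takes values in $\{\pm 1\}$) and splitting the finitely generated free abelian group as $\Gamma_P = \langle\gamma_0\rangle\oplus\Gamma_P'$, the K\"unneth formula for group cohomology of a direct product gives
\begin{equation*}
H^n(\Gamma_P;V) = \bigoplus_{p+q=n}H^p(\langle\gamma_0\rangle;H^q(\Gamma_P';V)),
\end{equation*}
where $\langle\gamma_0\rangle\cong\bbZ$ acts through $\varrho_P(\gamma_0)$. Since $\Id-\varrho_P(\gamma_0)$ commutes with the $\Gamma_P'$-action and is invertible on $V$, it remains invertible on $H^q(\Gamma_P';V)$, forcing both $H^0(\bbZ;-)$ and $H^1(\bbZ;-)$ to vanish; thus $H^*(\Gamma_P;V)=0$. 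The main delicate point I expect is matching the twist term $d_{\Xi_P}$ in the chain map verification with the derivative of the section factor $\varrho_P(n)v$, which is where the somewhat unusual formula \eqref{dc.18b} enters; the rest of the argument then reduces to well-known facts.
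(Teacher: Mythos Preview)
Your argument is correct, but it follows a different path from the paper's proof. The paper decomposes $d_{E_P}=d_f+d_{\mathfrak{n}_P}$, where $d_f$ is the de Rham differential of the flat bundle $N_P\times_{\Xi_P|_{\Gamma_P}}V$, and organizes this as a double complex using the $A_P$-weight grading; the associated spectral sequence then degenerates at $E_2=H^*(\mathfrak{n}_P;V)$ when $\Xi_P$ is trivial, and at $E_1=\{0\}$ when $\Xi_P$ is non-trivial, the latter because the flat line bundle $N_P\times_{\Xi_P|_{\Gamma_P}}\bbC$ on the torus $T_P$ is acyclic by the K\"unneth formula for $T_P\cong (\bbS^1)^{2n}$. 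Your approach instead cites the nilmanifold van Est theorem as a black box in the trivial case, and in the non-trivial case works purely in group cohomology: you locate a primitive $\gamma_0\in\Gamma_P$ with $\varrho_P(\gamma_0)$ unipotent times $-1$, hence with $\Id-\varrho_P(\gamma_0)$ invertible, and split off the cyclic factor. Both methods are valid; the paper's is more self-contained (it reproves van Est via the weight filtration rather than quoting it), while yours is slightly more conceptual in the non-trivial case. One small point: what you call the ``K\"unneth formula'' for $H^*(\langle\gamma_0\rangle\oplus\Gamma_P';V)$ is really the Lyndon--Hochschild--Serre spectral sequence, which for a $\bbZ$-quotient degenerates at $E_2$ and gives a two-step filtration rather than a direct sum; this does not affect your vanishing conclusion.
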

\begin{proof} 
This is one of the many manifestations of van Est's theorem \cite{vE}. First, fixing a basis $\{w^1_q,\ldots, w_q^{k_q}\}$ of $\Lambda^q\mathfrak{n}^*_P\otimes V$, we can write a general element $\lambda\in \Omega^q(T_P;E_P)$ as
\begin{equation}
   \lambda= \sum_j h^j w_q^j, \quad h^j\in \CI(T_P).
\label{vE.2}\end{equation}
We can in this way extend the definition of $d_{\mathfrak{n}_P}$ to a differential on all of  $\Omega^*(T_P;E_P)$ by
$$
    d_{\mathfrak{n}_P}\lambda= \sum_j h^j d_{\mathfrak{n}_P}w_q^j.
$$
We can also introduce  another differential $d_f$ on the complex $\Omega^*(T_P;E_P)$ defined by
$$
        d_f \lambda:= \sum_j (dh^j\wedge w^j_q+ h^jd_{\Xi_P}w^j_q).  
$$
In other words, the differential $d_f$ corresponds to the differential of the flat vector bundle  
\begin{equation}
N_P\times_{\left.\Xi_P\right|_{\Gamma_P}} V\cong (N_P\times_{\left.\Xi_P\right|_{\Gamma_P}} \bbC)^k.
\label{kf.1}\end{equation}
Then a simple computation shows that $d_{\mathfrak{n}_P}$ and $d_f$ anti-commute.  Moreover, in terms of these differentials, we have that
$$
     d_{E_P}= d_f+d_{\mathfrak{n}_P}.  
$$
Using the action \eqref{dc.21} as well as Proposition~\ref{dc.22} and \eqref{dc.26}, the complex of $d_{E_P}=d_f+d_{\mathfrak{n}_P}$ can be seen as a double complex with bigrading given by declaring an element $\iota_P(\omega\otimes v_{P,i})$ of bidegree  $(-w_i+q,w_i)$ whenever $\omega\in \Lambda^q\mathfrak{n}^*_P$, where we recall that the weight $w_i$ was introduced in equation \eqref{dc.10}.  If $\Xi_P$ is the trivial homomorphism, then  the first page of the associated spectral sequence is $E_1=\Lambda^*\mathfrak{n}_P\otimes V$ with differential $d_1=d_{\mathfrak{n}_P}$, so that the spectral sequence degenerates at the second page $E_2=H^*(\mathfrak{n}_P;V)$, yielding the result.  If instead $\Xi_P$ is a non-trivial homomorphism, then the complex of the differential $d_f$ is acyclic.  Indeed, by \eqref{kf.1}, it suffices to show that the flat line bundle $L:=N_P\times_{\left.\Xi_P\right|_{\Gamma_P}} \bbC$ on $T_P$ has trivial cohomology.  Now, if $\{\gamma_1,\ldots,\gamma_{2n}\}$ is a basis of $\Gamma_P$, let $L_i\to \bbS^1$ be the flat line bundle with holonomy given by $\Xi_P(\gamma_i)$. By the Künneth theorem, we have that 
$$
        H^*(T_P;L)\cong \bigotimes_{i=0}^{2n} H^*(\bbS^1;L_i).
$$ 
Since $\Xi_{P}$ is non-trivial, at least one of the $L_i$ must have non-trivial holonomy, that is, trivial cohomology, and therefore $H^*(T_P;L)$ must vanish as claimed.  Thus, coming back to the spectral sequence, this means in this case that it degenerates at the first page $E_1=\{0\}$, from which the second statement follows.         
\end{proof}

\section{The cusp surgery metric and and the cusp surgery bundle}\label{sm.0}

The hyperbolic manifold $(X,g_X)$ has a natural compactification by a manifold with boundary $\bX$ obtained from the decomposition \eqref{dc.7} by replacing $F_P(Y)\cong (Y,\infty)\times T_P$ by 
$\hat{F}_P(Y)\cong (Y,\infty]\times T_P$,
\begin{equation}
  \bX= X(Y) \bigsqcup_{P\in \mathfrak{B}_{\Gamma}} \hat{F}_P(Y), \quad \pa \bX= \bigsqcup_{P\in \mathfrak{P}_{\Gamma}} \{\infty\}\times T_P.
\label{sm.1}\end{equation}
On $\bX$, we can choose a boundary defining function $\bx$ such that for each $P\in \mathfrak{P}_{\Gamma}$, $\bx= t^{-1}$ on $\hat{F}_P(Y)= (Y,\infty]\times T_P$ with $t$ the coordinate on the first factor.   Hence, in terms of $\bx$, the hyperbolic metric on $F_P(Y)$ is given by
\begin{equation}
   \frac{d\bx^2}{\bx^2}+ \bx^2 g_{T_P}
\label{sm.2}\end{equation}
and $\{\bx^{-w_i}\nu_{N_P,i}\}$ is an orthonormal basis of sections of $E$.  Let 
\begin{equation}
M= \bX\bigcup_{\pa\bX} \bX
\label{sm.3}\end{equation}
be the double of $\bX$ obtained by gluing two copies of $\bX$ along their boundary.  Let us denote by $\bX_1$ and $\bX_2$ the copies of $\bX$ in $M$ intersecting on their boundary and let $\bx_1$ and $\bx_2$ be there corresponding boundary defining functions.  We can equip $M$ with an orientation by declaring that $\bX_1$ has the same orientation as $\bX$ and $\bX_2$ has the opposite orientation.  The closed manifold $M$ has a distinguished hypersurface $Z\subset M$ corresponding to the intersection of $\bX_1$ and $\bX_2$,
\begin{equation}
    Z := \bX_1\cap \bX_2= \pa \bX_1=\pa\bX_2\cong \pa \bX= \bigsqcup_{P\in \mathfrak{P}_{\Gamma}} T_P.
\label{sm.4}\end{equation}  
The hypersurface $Z$ has a tubular neighborhood $\nu_Z: (-Y^{-1}, Y^{-1})\times Z\hookrightarrow M$ defined by
$$
    \nu_Z(s,\tau)= (s^{-1},\tau)\in \hat{F}_P(Y)\subset \bX_1  \quad \mbox{for}\; \tau\in T_P,  s\ge 0
$$
  and by
$$
    \nu_Z(s,\tau)= (-s^{-1},\tau)\in \hat{F}_P(Y)\subset \bX_2  \quad \mbox{for}\; \tau\in T_P,  s\le 0.
$$  
Let $x\in \CI(M)$ be the function which restricts to $\bx_1$ on $\bX_1$ and to $-\bx_2$ on $\bX_2$, so that $\nu_Z^*x$ is just the projection $(-Y^{-1},Y^{-1})\times Z\to (-Y^{-1},Y^{-1})$ on the first factor.     
Taking $Y$ bigger if needed, consider then on $M$ a smooth family of metrics $g_{\epsilon}$ parametrized by $\epsilon>0$ such that $\nu_Z^*g_{\epsilon}$ is given by 
\begin{equation}
  \frac{dx^2}{x^2+\epsilon^2} + (x^2+\epsilon^2)g_{T_P} \quad \mbox{on} \; (-e^{-Y},e^{-Y})\times T_P \quad \mbox{for} \; P\in \mathfrak{P}_{\Gamma},
\label{sm.5}\end{equation}
and which away from $Z$ converges smoothly to the hyperbolic metric $g_{X_i}$ on each copy $X_i$ of $X$ inside $M$.  To see that such families of metrics exist, let $\chi\in\CI(M)$ be a function taking values in $[0,1]$, of compact support in the image of $\nu_Z$ and identically equal to $1$ in a neighborhood of $Z$.  Then we can take
\begin{equation}
     g_{\epsilon}= (1-\chi)g_0+ \chi\left( (\nu_{Z})_*\left( \frac{dx^2}{x^2+\epsilon^2} + (x^2+\epsilon^2)g_{T_P}\right)\right)
\label{sm.5b}\end{equation}
where $g_0$ is the hyperbolic metric on each copy of $X$ in $M$.   For such a family of metrics, it is useful to consider the single surgery space of Mazzeo and Melrose \cite{mame1}
\begin{equation}
    X_s:= [M\times [0,1]_{\epsilon}, Z\times \{0\}]
\label{sm.6}\end{equation}
obtained by blowing up $Z\times \{0\}$ inside $M\times [0,1]_{\epsilon}$ in the sense of Melrose \cite{MelroseAPS}.
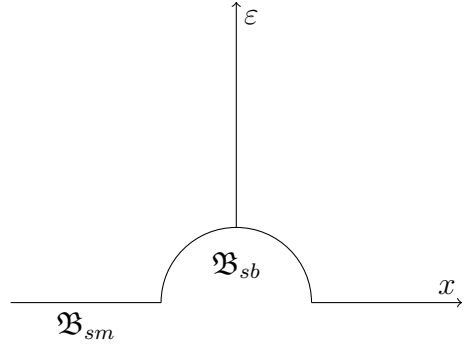
\begin{figure}[h]
\begin{tikzpicture}
surgery space
\draw(4,3) arc[radius=1, start angle=0, end angle=180];
\draw (0,3)--(2,3);
\draw[->] (4,3)--(6,3);
\draw[->] (3,4)--(3,7);

labeling
\node at (3,3.5) {$\bhs{sb}$};
\node at (1,2.7) {$\bhs{sm}$};
\node at (3.2,6.8) {$\epsilon$};
\node at (5.8,3.2) {$x$};

\end{tikzpicture}
\caption{The single surgery space $X_s$}
\label{fig.4}\end{figure}

  It is a manifold with corners with natural blow-down map $\beta_s: X_s\to M\times [0,1]_{\epsilon}$.  We denote by $\bhs{sb}:= \beta_s^{-1}(Z\times \{0\})$ the new boundary hypersurface introduced by the blow-up and by $\bhs{sm}:=\overline{\beta_s^{-1}(M\setminus Z)\times \{0\}}$ the lift of the old boundary hypersurface at $\epsilon=0$.

  There is also a boundary hypersurface at $\epsilon=1$, but it will not play any role in what follows.  Notice then that the function
\begin{equation}
     \rho:= \sqrt{x^2+\epsilon^2}
\label{sm.7}\end{equation}
is a boundary defining function for $\bhs{sb}$, so that $\frac{\epsilon}{\rho}$ is a boundary defining function for $\bhs{sm}$.

Let $E_i\to X_i$ be the flat vector bundle $E\to X$ on the copy $X_i$ of $X$ in $M$.  On $X_s$, we can then consider the vector bundle $E_s\to X_s$ which away from $\beta_s^{-1}(Z\times [0,1]_{\epsilon})$ is just the pull-back of $E_i\to X_i$ on $X_i\times [0,1]_\epsilon\subset X_s$, and near $\beta_s^{-1}(T_P\times [0,1]_{\epsilon})$, is spanned by the sections 
\begin{equation}
  \rho^{-w_i} \nu_{N_P,i}, \quad i\in\{1,\ldots,k\},
\label{sm.8}\end{equation}
which we declare to be linearly independent in each fiber of $E_s$ where they take values, as well as  smooth and bounded near $\bhs{sb}$.  Consider on $E_s$ a smooth bundle metric $h_s$ such that the sections \eqref{sm.8} form an orthonormal basis of sections of $E_s$ near $\beta_s^{-1}(T_P\times [0,1]_{\epsilon})$ for each $P\in \mathfrak{P}_{\Gamma}$ and such that away from $\bhs{sb}$, $h_s$ converges smoothly to the bundle metric $h_{E_i}$ on each copy $X_i$ of $X$ in $M$ as $\epsilon\searrow 0$.  As in \eqref{sm.5b}, such a bundle metric can be constructed using cut-off functions.  Notice that the flat connections on $E$ and $E_P$ for $P\in \mathfrak{P}_{\Gamma}$ induce a flat connection $d_{\epsilon}$ on $E_s$ on level sets of $\epsilon$ for $\epsilon>0$.  For instance, in terms of the local basis of sections \eqref{sm.8}, 
\begin{equation}
 d_{\epsilon}(\rho^{-w_i}\nu_{N_P,i})= -\frac{xw_i}{\rho}\left( \frac{dx}{\rho}\otimes \rho^{-w_i}\nu_{N_P,i}  \right)+ \rho^{-w_i}d_{E_P}\nu_{N_P,i},
\label{sm.9}\end{equation}
while away from $\beta_s^{-1}(Z\times [0,1]_{\epsilon})$ it converges smoothly to the flat connection of $E_i$ on each copy $X_i$ of $X$ inside $M$.  In particular, in terms of the usual cotangent bundle on $X_s$, this flat connection develops singularities at $\bhs{sb}$.  However, it is more useful to describe it in terms of the $\ed$-cotangent bundle $\Ed T^*X_s$ of \cite{ARS1},
which in $\nu_Z((-e^{-Y},e^{-Y})T_P)$ is spanned by  the sections 
\begin{equation}
         \frac{dx}{\rho}, \rho d\tau_1,\ldots, \rho d\tau_{2n}, 
\label{sm.9b}\end{equation}
considered as smooth, bounded and non-vanishing all the way to $\bhs{sb}$ as sections of $\Ed T^*X_s$, where the $\tau_i$ are a choice of coordinates on $T_P$.  
\begin{proposition}
The family of flat connections $d_{\epsilon}$ on $E_s$ induces an operator
$$
        d_{\epsilon}: \CI(X_s; \Lambda^*(\Ed T^*X_s)\otimes E_s)\to \rho^{-1} \CI(X_s; \Lambda^{*+1}(\Ed T^*X_s)\otimes E_s)
$$
which is an $\ed$-differential operator of order 1 in the sense of \cite{ARS1}, that is,
$$
d_{\epsilon}\in \Diff^1_{\ed}(X_s; \Lambda^*(\Ed T^* X_s)\otimes E_s)=\rho^{-1}\Diff^1_{\ephi}(X_s; \Lambda^*(\Ed T^* X_s)\otimes E_s),
$$
where $\phi:Z\to \mathfrak{P}_{\Gamma}$ is the projection which sends the connected component $T_P\subset Z$ onto $P\in \mathfrak{P}_{\Gamma}$. 
\label{sm.10}\end{proposition}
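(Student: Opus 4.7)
The plan is to verify the claim locally and use the fact that away from $\bhs{sb}$ the statement is immediate. Indeed, on $X_s\setminus\bhs{sb}$, the family $d_\epsilon$ restricts to the flat connections $d_{E_i}$ on each copy $X_i\times[0,1]_\epsilon$, which are smooth first-order differential operators in the usual sense, and therefore also in the $\ed$-sense. So the whole analysis reduces to a neighborhood of a component of $Z\times\{0\}\subset X_s$, where one can use the adapted local frames: the basis $\{\rho^{-w_i}\nu_{N_P,i}\}_{i=1}^k$ of $E_s$ from \eqref{sm.8}, and the basis $\{dx/\rho,\;\rho\,d\tau_1,\ldots,\rho\,d\tau_{2n}\}$ of $\Ed T^*X_s$ from \eqref{sm.9b}.

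An arbitrary section of $E_s$ near $\bhs{sb}$ is written $s=\sum_i f^i\,\rho^{-w_i}\nu_{N_P,i}$ with $f^i\in\CI(X_s)$, and by the Leibniz rule
\[
d_\epsilon s \;=\; \sum_i \left( df^i\otimes \rho^{-w_i}\nu_{N_P,i} \;+\; f^i\, d_\epsilon(\rho^{-w_i}\nu_{N_P,i})\right).
\]
For the second piece I would substitute \eqref{sm.9}. Its first summand $-\frac{xw_i}{\rho}(dx/\rho)\otimes\rho^{-w_i}\nu_{N_P,i}$ is manifestly a smooth section of $\Lambda^1(\Ed T^*X_s)\otimes E_s$, because $x/\rho\in\CI(X_s)$ is bounded near $\bhs{sb}$ and $dx/\rho$ is a smooth frame section of $\Ed T^*X_s$; in particular it sits in $\rho^{-1}\CI$. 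For the second summand $\rho^{-w_i}d_{E_P}\nu_{N_P,i}$, expand
\[
d_{E_P}\nu_{N_P,i} \;=\; \sum_{j,k} B_{ijk}(\tau)\,d\tau_k\otimes \nu_{N_P,j},
\]
with $B_{ijk}\in\CI(T_P)$. The key observation is that, on the infinitesimal level, the action of $\mathfrak{n}_P\subset\mathfrak{g}$ on $V$ only raises weights under $\Ad(a_P(r))$ (by \eqref{dc.9} and the definition of a positive root system), and the twist by $\Xi_P$ is weight-preserving on each $v_{P,i}$, so $B_{ijk}\ne 0$ forces $w_j\ge w_i$. Converting bases,
\[
\rho^{-w_i} d_{E_P}\nu_{N_P,i} \;=\; \rho^{-1}\sum_{j,k}\rho^{\,w_j-w_i}B_{ijk}(\tau)\,(\rho\,d\tau_k)\otimes(\rho^{-w_j}\nu_{N_P,j}),
\]
and since $w_j-w_i\ge 0$ every coefficient is smooth up to $\bhs{sb}$. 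Thus $d_\epsilon(\rho^{-w_i}\nu_{N_P,i})\in\rho^{-1}\CI(X_s;\Lambda^1(\Ed T^*X_s)\otimes E_s)$, so the zeroth-order (connection) part of $d_\epsilon$ lies in $\rho^{-1}\Diff^0_{\ephi}$.

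For the derivative piece $df^i\otimes\rho^{-w_i}\nu_{N_P,i}$, I would rewrite $df^i$ in the $\ed$-frame using surgery coordinates near $\bhs{sb}$. Choosing the polar-type coordinates adapted to the blow-up --- $\rho=\sqrt{x^2+\epsilon^2}$, together with an angular coordinate and the tangential coordinates $\tau_j$ on $T_P$ --- the $\ed$-vector fields are spanned by $\rho\,\partial_x$, $\rho^{-1}\partial_{\tau_j}$ and the vector fields tangential to $\bhs{sb}$ obtained from $\partial_\epsilon$, and one sees directly that $df^i$ expands as a smooth (over $\bhs{sb}$) linear combination of $dx/\rho$ and $\rho\,d\tau_k$ with coefficients given by $\ed$-vector fields applied to $f^i$. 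This exhibits the derivative piece as a first-order $\ed$-differential operator, hence as $\rho^{-1}$ times a first-order $\ephi$-operator, and completes the verification of the mapping property and of membership in $\Diff^1_{\ed}=\rho^{-1}\Diff^1_{\ephi}$.

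The main obstacle in this argument, and the step I would write out most carefully, is the weight-monotonicity statement $w_j\ge w_i$ whenever $B_{ijk}\ne 0$: without it the term $\rho^{-w_i}d_{E_P}\nu_{N_P,i}$ would in general produce spurious negative powers of $\rho$ incompatible with $\rho^{-1}\CI$. This is where the structure of $\varrho$ as a representation of $G$, the compatibility of the admissible basis $\{v_{P,i}\}$ with the $A_P$-weight decomposition \eqref{dc.10}, and the fact that $\mathfrak{n}_P$ has positive restricted weights under $\Ad(a_P)$ all enter in an essential way.
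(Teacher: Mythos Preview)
Your argument is correct and follows essentially the same route as the paper. Both proofs reduce to the second term $\rho^{-w_i}d_{E_P}\nu_{N_P,i}$ in \eqref{sm.9} and control it by a weight argument; the paper invokes Proposition~\ref{dc.22} and \eqref{dc.26} to say that $d_{E_P}\nu_{N_P,i}$ has weight $w_i$ (from $d_{\mathfrak{n}_P}$) or $w_i-1$ (from $d_{\Xi_P}$) with respect to the operator $W$ on $\Lambda^*\mathfrak{n}_P^*\otimes V$, whereas you reach the equivalent conclusion $w_j\ge w_i$ directly from the fact that $\mathfrak{n}_P$ is the positive root space (so $\varrho(\mathfrak{n}_P)$ raises $V$-weights by $1$) and that $\Xi_P$ is scalar on $V$. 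Two small cleanups: the reference to vector fields coming from $\partial_\epsilon$ in your last paragraph is spurious (the operator $d_\epsilon$ is fiberwise in $\epsilon$, so only $\rho\partial_x$ and $\rho^{-1}\partial_{\tau_j}$ are relevant), and you should note that the extension from degree-$0$ sections to all of $\Lambda^*(\Ed T^*X_s)\otimes E_s$ follows by Leibniz once you have the connection one-forms under control---the paper handles this by citing \cite[\S2.2]{ARS1}.
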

\begin{proof}
When the exterior derivative hits the `form part', this can be treated as in \cite[\S~2.2]{ARS1}.  When we differentiate sections, the first term on the right hand side of \eqref{sm.9} can also be treated as in \cite[\S~2.2]{ARS1}, so the only delicate point is the second term in \eqref{sm.9}.  However, since $\nu_{N_P,i}$ is in the image of \eqref{dc.16}, we see by Proposition~\ref{dc.22} and \eqref{dc.26}, that $d_{E_P}\nu_{N_P,i}$ has a part of weight $w_i$ (coming from $d_{\mathfrak{n}_P}$) and a part of weight $w_i-1$ (coming from $d_{\Xi_P}$) with respect to the action of $A_P$ and $\mathfrak{a}_P$, which means that the pointwise norm of $\rho^{-w_i+1}d_{E_P}\nu_{N_P,i}$ with respect to $g_{\epsilon}$ and $h_s$ is bounded and that  $\rho^{-w_i+1}d_{E_P}\nu_{N_P,i}$ is a smooth bounded section of $E_s$.  In fact, for the part of weight $w_i$, it is even better, namely $\rho^{-w_i}d_{\mathfrak{n}_P}\nu_{N_P,i}$ is a smooth bounded section of $E_s$.    
\end{proof}

Using the family of metrics $g_{\epsilon}$ and the bundle metric $h_s$, we can define an $L^2$-inner product for each $\epsilon>0$ and consider the formal adjoint $d^*_{\epsilon}$, as well as the corresponding de Rham operators and Hodge Laplacians,
\begin{equation}
  \eth_{\epsilon}= d_{\epsilon}+ d^*_{\epsilon}, \quad \Delta_{\epsilon}= \eth_{\epsilon}^2.
\label{sm.11}\end{equation}
We deduce the following from Proposition~\ref{sm.10}.
\begin{corollary}
The family of de Rham operator $\eth_\epsilon$ induces an operator
$$
        \eth_{\epsilon}: \CI(X_s; \Lambda^*(\Ed T^*X_s)\otimes E_s)\to \rho^{-1} \CI(X_s; \Lambda^{*+1}(\Ed T^*X_s)\otimes E_s).
$$
which is an $\ed$-differential operator of order 1.  Similarly, $\Delta_{\epsilon}$ induces an operator
$$
        \Delta_{\epsilon}: \CI(X_s; \Lambda^*(\Ed T^*X_s)\otimes E_s)\to \rho^{-2} \CI(X_s; \Lambda^{*+1}(\Ed T^*X_s)\otimes E_s)
$$
which is an $\ed$-differential operator of order 2, that is, 
$$
\Delta_{\epsilon}\in \Diff^2_{\ed}(X_s; \Lambda^*(\Ed T^* X_s)\otimes E_s)=\rho^{-2}\Diff^1_{\ephi}(X_s; \Lambda^*(\Ed T^* X_s)\otimes E_s).
$$

\label{sm.11b}\end{corollary}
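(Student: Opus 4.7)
The plan is to deduce both assertions from Proposition~\ref{sm.10} via the adjoint and composition properties of the $\ed$-calculus. Since $\eth_\epsilon = d_\epsilon + d_\epsilon^*$ and $\Delta_\epsilon = \eth_\epsilon^2$, everything reduces to two points: first, that the formal adjoint $d_\epsilon^*$ of $d_\epsilon$ with respect to the $L^2$-inner product on $\Lambda^*(\Ed T^*X_s)\otimes E_s$ induced by $g_\epsilon$ and $h_s$ belongs to $\Diff^1_\ed$; and second, that $\Diff^*_\ed$ is closed under composition with the expected weight, so that $\eth_\epsilon^2 \in \Diff^2_\ed$.

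For the adjoint, the crucial observation is that $h_s$ and $g_\epsilon$ were arranged precisely so that the sections \eqref{sm.8} form an orthonormal frame for $E_s$ near $\bhs{sb}$ and the sections \eqref{sm.9b} form an orthonormal frame for $\Ed T^*X_s$ near $\bhs{sb}$. Consequently, the induced fiber metric on $\Lambda^*(\Ed T^*X_s)\otimes E_s$ is a smooth non-degenerate bundle metric on all of $X_s$, including $\bhs{sb}$, and the Hodge star $\star_\epsilon$ built from $g_\epsilon$, combined with the dualizing isomorphism supplied by $h_s$, extends to a smooth bundle automorphism on $X_s$. Expressing $d_\epsilon^*$ as $\pm \star_\epsilon d_\epsilon \star_\epsilon$ up to this smooth bundle conjugation, and using that conjugation by a smooth bundle automorphism preserves $\Diff^1_\ed$, Proposition~\ref{sm.10} immediately yields $d_\epsilon^* \in \Diff^1_\ed$, hence $\eth_\epsilon \in \Diff^1_\ed$ and in particular $\eth_\epsilon$ maps $\CI(X_s;\Lambda^*(\Ed T^*X_s)\otimes E_s)$ into $\rho^{-1}\CI(X_s;\Lambda^{*+1}(\Ed T^*X_s)\otimes E_s)$.

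For the second statement, note that $\rho\,\eth_\epsilon \in \Diff^1_\ephi$ by the first statement together with the identity $\Diff^1_\ed = \rho^{-1}\Diff^1_\ephi$ of \cite{ARS1}. Since $\Diff^*_\ephi$ is closed under composition (it is generated as a filtered algebra by the Lie algebra of $\ephi$-vector fields) and the commutator $[\eth_\epsilon,\rho]$ is a zeroth-order multiplication operator (being $\eth_\epsilon$ applied to the smooth function $\rho$ on $X_s$), a short commutator manipulation shows that $\rho^2\eth_\epsilon^2 \in \Diff^2_\ephi$, that is, $\Delta_\epsilon \in \Diff^2_\ed$, with the claimed mapping property into $\rho^{-2}\CI$. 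The only point requiring verification, namely the simultaneous compatibility at $\bhs{sb}$ of the orthonormal frames for $E_s$ and for $\Ed T^*X_s$, is already built into the choice of $h_s$ and $g_\epsilon$ made in \S\ref{sm.0}, so no further work is needed beyond invoking those choices.
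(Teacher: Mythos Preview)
Your argument is correct and follows the same route the paper implicitly takes: the paper simply states the corollary as an immediate consequence of Proposition~\ref{sm.10}, and you have supplied the standard details (closure of $\Diff^*_{\ed}$ under adjoints with respect to smooth $\ed$-metrics and under composition). Two small imprecisions worth tightening: for bundle-valued forms the formula is $d_\epsilon^* = \pm\star_\epsilon \tilde d_\epsilon \star_\epsilon$ with $\tilde d_\epsilon$ the connection adjoint to $d_\epsilon$ with respect to $h_s$ (not $d_\epsilon$ itself), though since the frame \eqref{sm.8} is $h_s$-orthonormal the connection form of $\tilde d_\epsilon$ is just minus the conjugate transpose of that of $d_\epsilon$ and hence has the same $\ed$-regularity; and $[\eth_\epsilon,\rho]$ is Clifford multiplication by $d\rho = x\,\tfrac{dx}{\rho}$, a smooth section of $\Ed T^*X_s$, rather than literally ``$\eth_\epsilon$ applied to $\rho$'', but this does not affect your conclusion.
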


To check that the uniform construction of the resolvent of \cite[Theorem~4.5]{ARS1} does apply to these operators, we need however to analyze more carefully the limiting behavior of $\eth_{\epsilon}$ as $\epsilon\searrow 0$.  First, to work with $b$-densities, we proceed as in \cite{ARS1} and consider instead the conjugated family of operators
\begin{equation}
   D_{\epsilon}:= \rho^n \eth_{\epsilon} \rho^{-n},
\label{sm.12}\end{equation}
where we recall that $\dim X= d= 2n+1$.  On $\CI(T_P;\Lambda^*(T^*T_P)\otimes E_P)$, let $W$ be the weight operator with respect to the action $\Lambda^*\Ad^* \otimes \varrho$ of $A_P$, so that 
\begin{equation}
     W(\omega\otimes \nu_{N_P,i})= (w_i-q)(\omega\otimes \nu_{N_P,i}) \quad \mbox{for}\; \omega\in \Omega^q(T_P).
\label{sm.12b}\end{equation}
Now, the section $\nu_{N_P,i}$ is not necessarily flat, but we see from Proposition~\ref{dc.22} and \eqref{dc.26} that 
$$
      W(d_{\mathfrak{n}_P}\nu_{N_P,i})=w_i d_{\mathfrak{n}_P}\nu_{N_P,i}, \quad       W(d_{\Xi_P}\nu_{N_P,i})=(w_i-1) d_{\Xi_P}\nu_{N_P,i}.
$$  
 Hence, in terms of the decomposition
\begin{equation}
\Lambda^q(\Ed T^*X_s)\otimes E_s= \rho^q\Lambda^q(T^*T_P)\otimes E_s\oplus \frac{dx}{\rho}\wedge \rho^{q-1}\Lambda^{q-1}(T^*T_P)\otimes E_s 
\label{sm.13}\end{equation}
in a tubular neighborhood of $T_P$ in $X_s$ and with respect to the basis of sections \eqref{sm.8} and the basis of forms \eqref{sm.9b}, one computes using \eqref{sm.9} that the operator $D_{\epsilon}$ takes the form
\begin{equation}
D_{\epsilon}= \left( \begin{array}{cc}  \frac{1}{\rho} \eth_{T_P} + K_P & -\rho\pa_x -(W+n)\frac{x}{\rho}  \\
\rho \pa_x-(W+n)\frac{x}{\rho}  & -\frac{1}{\rho} \eth_{T_P} -K_P   \end{array}  \right)
\label{sm.14}\end{equation}
near $\beta_s^{-1}(T_P\times [0,1])$, where $\eth_{T_P}$ is the de Rham operator on $(T_P,g_{T_P})$ acting on the vector bundle $E_P$ with flat connection obtained by declaring the sections $\nu_{N_P,i}$ to have differential 
\begin{equation}
     d \nu_{N_P,i}:= \frac{d\Xi_P}{\Xi_P}\otimes \nu_{N_P,i},
\label{sm.14b}\end{equation}
 while the de Rham operator of Kostant $K_P$ acts pointwise via the identification 
 $$
 \rho^{-W}\Omega^q(T_P;E_P)=\rho^{-W}\CI(T_P;\Lambda^q\mathfrak{n}^*_P\otimes V),
 $$ 
 keeping in mind that $K_P$ commutes with $W$ by Corollary~\ref{dc.25}.  Notice that $E_P$, equipped with the flat connection given by $\eqref{sm.14b}$, corresponds to the flat vector bundle 
 $$
 N_P\times_{\left.\Xi_P\right|_{\Gamma_P}} V
 $$  
 with holonomy representation given by the restriction of $\Xi_P$ to $\Gamma_P\cong \pi_1(T_P)$.  In particular, it is a trivial flat vector bundle when $\Xi_P$ is the trivial group homomorphism, but otherwise has no flat section and gives rise to an acyclic complex of differential forms.    

 The vertical operator of \cite[Definition~4.1]{ARS1} is then obtained by restricting the action of $\rho D_{\epsilon}$ to $\bhs{sb}$ in $X_s$.  Now, when we restrict the action of
$$
\rho D_{\epsilon}=\left( \begin{array}{cc}  \eth_{T_P}+ \rho K_P & -\rho^2\pa_x -(W+n)x  \\
\rho^2 \pa_x-(W+n)x  & - \eth_{T_P}- \rho K_P   \end{array}  \right)
$$ 
to the connected component  $\bhs{sb,P} := \beta_s^{-1}(T_P)$ of  $\bhs{sb}$, we get the constant family
\begin{equation}
 D_{v,P}= \left( \begin{array}{cc} \eth_{T_P}  & 0 \\ 0 & -\eth_{T_P} \end{array} \right).
\label{sm.15}\end{equation}
  More precisely, using the angle $\theta=\arctan \frac{x}{\epsilon}$ gives a natural identification $\bhs{sb,P}= [-\frac{\pi}2, \frac{\pi}2]\times T_P$, and $D_{v,P}$ is seen as a family of operators on $T_P$ parametrized by $\theta\in [-\frac{\pi}2,\frac{\pi}2]$.  In particular, by Lemma~\ref{dc.27}, the kernel of $D_{v,P}$ is trivial if the homomorphism $\Xi_P$ is non-trivial, and otherwise is a vector bundle over $[-\frac{\pi}2,\frac{\pi}2]$ with space of smooth sections canonically identified with 
$\CI([-\frac{\pi}2,\frac{\pi}2];\Lambda^*\mathfrak{n}_P^*\otimes V\oplus\Lambda^*\mathfrak{n}_P^*\otimes V)$ via the isomorphism
\begin{equation}
      \Psi_P: \CI([-\frac{\pi}2,\frac{\pi}2];\Lambda^*\mathfrak{n}_P^*\otimes V\oplus\Lambda^*\mathfrak{n}_P^*\otimes V)\to \CI([-\frac{\pi}2,\frac{\pi}2]; \ker D_v)
\label{sm.16}\end{equation}
defined by
$$
  \Psi_P((\omega,v_i),(\eta,v_j))= (\rho^{p-w_i}\omega\otimes \nu_{N_P,i}, \rho^{q-w_j} \frac{dx}{\rho}\wedge \eta \wedge \nu_{N_P,j})
$$
for $\omega\in\CI([-\frac{\pi}2,\frac{\pi}2]; \Lambda^p\mathfrak{n}_P^*)$ and $\eta\in\CI([-\frac{\pi}2,\frac{\pi}2]; \Lambda^q\mathfrak{n}_P^*)$.

Let $\Pi_h$ be the fiberwise $L^2$-orthogonal projection onto the kernel of $D_v$ with respect to the fiber bundle 
$$
    \bhs{sb,P} = [-\frac{\pi}2,\frac{\pi}2]\times T_P\to  [-\frac{\pi}2,\frac{\pi}2].
$$ 
Then the model operator $D_b$ of \cite[Definition~4.2]{ARS1} is obtained by looking at the action of $D_{\epsilon}$ on the sections of $\ker D_v$,
$$
     D_b u:= \Pi_h\left.(D_{\epsilon}\widetilde{u})\right|_{\bhs{sb}} \quad \mbox{for} \; u\in \CI(\bhs{sb}; \ker D_v),
 $$
 where $\widetilde{u} \in \CI(X_s;\Lambda^*(\Ed T^*X_s)\otimes E_s)$ is any smooth extension which restricts to give $u$ on $\bhs{sb}$.  Hence, using the identification \eqref{sm.16} and the variable $X= \frac{x}{\epsilon}=\tan \theta$ on the interior of $\bhs{sb,P}$, we see that on $\bhs{sb,P}$, the model operator $D_b$ is trivial when the homomorphism $\Xi_P$ is non-trivial, and otherwise is given by
\begin{equation}
D_{b,P}= \left(  \begin{array}{cc} K_P & -\ang X\pa_X- (W+n)\frac{X}{\ang X} \\
\ang X \pa X -(W+n)\frac{X}{\ang X} & -K_P\end{array}\right),
\label{sm.17}\end{equation}
where $K_P$ is the de Rham operator of Kostant given in \eqref{dc.19} and $\ang X= \sqrt{1+X^2}$.  Using the notation 
\begin{equation}
  D(a):= \ang X^{-a} \ang X \pa_X \ang X^{a}= \ang X \pa_X + \frac{aX}{\ang X} \quad \mbox{for} \ a\in \bbR,
\label{sm.18}\end{equation}
this can be rewritten as
\begin{equation}
  D_{b,P}=  \left(  \begin{array}{cc} K_P & -D(W+n) \\
 D(-W-n) & -K_P\end{array}\right).
 \label{sm.19}\end{equation}
By Corollary~\ref{dc.25}, the operator $K_P$ commutes with the weight operator $W$, so that 
\begin{equation}
  D_{b,P}^2=  \left(  \begin{array}{cc} K_P^2 -D(W+n)D(-W-n) & 0 \\
 0 & K_P^2 -D(-W-n)D(W+n)\end{array}\right).
 \label{sm.20}\end{equation}
 The behavior is clearly different whether the homomorphism $\Xi_P$ is trivial or not.  For this reason, we will denote by $\mathfrak{P}_{\Gamma}^{\varrho}$ the subset of $\mathfrak{P}_{\Gamma}$ consisting of parabolic subgroups $P$ such that $\Xi_P$ is the trivial homomorphism.   

\begin{lemma}
The operators $D_{b}$ and $D_b^2$ are Fredholm as $b$-operators for the $b$-density $\frac{dX}{\ang X}$ for $X\in \bbR$ provided $\varrho\circ \vartheta\ne \varrho$, where $\vartheta$ is the Cartan involution.  
\label{sm.21}\end{lemma}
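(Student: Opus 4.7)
The plan is to verify the standard $b$-Fredholm criterion, i.e.\ that the indicial families of $D_b$ and $D_b^2$ at the two boundary faces of $\bhs{sb}$ are invertible on the critical line $\{\Re\zeta=0\}$ corresponding to the $L^2$-weight induced by the $b$-density $\frac{dX}{\ang{X}}$. Since $\bhs{sb}=\bigsqcup_{P\in\mathfrak{P}_{\Gamma}}\bhs{sb,P}$, I would treat each $P$ separately. For $P\in\mathfrak{P}_{\Gamma}\setminus\mathfrak{P}_{\Gamma}^{\varrho}$ Lemma~\ref{dc.27} gives $\ker D_v=0$ on $\bhs{sb,P}$, so there is nothing to check. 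For $P\in\mathfrak{P}_{\Gamma}^{\varrho}$ one must study $D_{b,P}$ as given by \eqref{sm.19}, acting on sections of $\Lambda^*\mathfrak{n}^*_P\otimes V\oplus \Lambda^*\mathfrak{n}^*_P\otimes V$ over $\bbR_X$.

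First, I would pass to the angular variable $\theta=\arctan X$ to identify $\bhs{sb,P}$ with $[-\pi/2,\pi/2]$ and introduce boundary defining functions $s_{\pm}=\pi/2\mp\theta$ near $\theta=\pm\pi/2$. A direct computation gives $\ang{X}\pa_X=\mp s_{\pm}\pa_{s_{\pm}}+O(s_{\pm})$ and $X/\ang{X}=\pm 1+O(s_{\pm}^2)$, so the operator $D(a)$ of \eqref{sm.18} is indeed a $b$-differential operator on $[-\pi/2,\pi/2]$, with indicial families
\begin{equation*}
I_{\pm}\bigl(D(a)\bigr)(\sigma)=a\mp i\sigma,\qquad\sigma\in\bbR.
\end{equation*}
Substituting in \eqref{sm.19} and using Corollary~\ref{dc.25} to commute $K_P$ with $W$, a short calculation shows that the squares of the two indicial matrices of $D_{b,P}$ both collapse to the scalar operator
\begin{equation*}
I_{\pm}\bigl(D_{b,P}\bigr)(\sigma)^2=\bigl(K_P^2+(W+n)^2+\sigma^2\bigr)\Id
\end{equation*}
on $\Lambda^*\mathfrak{n}^*_P\otimes V\oplus\Lambda^*\mathfrak{n}^*_P\otimes V$. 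Invertibility of the indicial families of $D_{b,P}$ and $D_{b,P}^2$ for all $\sigma\in\bbR$ therefore reduces to the strict positivity of $K_P^2+(W+n)^2$, i.e.\ to the triviality of $\cH^*(\mathfrak{n}_P;V)\cap\ker(W+n)$.

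The main obstacle is this last step: ruling out $K_P$-harmonic elements of $H_P$-weight $-n$ under the hypothesis $\varrho\circ\vartheta\ne\varrho$. I would invoke Kostant's theorem, which for $w\in W^1$ of length $q$ identifies the corresponding $M_P$-isotypic summand of $\cH^q(\mathfrak{n}_P;V)$ with an irreducible $M_P$-module on which $H_P$ acts by the scalar $(w\cdot\lambda)(H_P)$, where $\lambda$ is the highest weight of $\varrho$, $w\cdot\lambda:=w(\lambda+\rho_G)-\rho_G$, and $W^1$ is the set of minimal-length representatives of $W_{M_P}\backslash W_G$. Since $\rho_G(H_P)=n$ in our real-rank-one setting, the condition $(w\cdot\lambda)(H_P)=-n$ reduces to $w(\lambda+\rho_G)(H_P)=0$. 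Running through the explicit description of $W^1$ for $G=\SO_o(2n+1,1)$ or $\Spin(2n+1,1)$, this equality would force $\lambda$ to lie in the $\vartheta$-fixed subspace of weights, equivalently $\varrho\circ\vartheta\cong\varrho$, contradicting the hypothesis. Granting this, $K_P^2+(W+n)^2>0$, the indicial families of $D_b$ and $D_b^2$ are invertible on the critical line, and both operators are Fredholm as $b$-operators.
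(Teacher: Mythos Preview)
Your proof follows the same approach as the paper: reduce Fredholmness to invertibility of the indicial family via \cite{MelroseAPS}, compute $I(D_{b,P}^2,\lambda)=K_P^2+(W+n)^2+\lambda^2$ at both ends, and then check that $(W+n)\ne 0$ on $\ker K_P=\cH^*(\mathfrak{n}_P;V)$. The only difference is in this last step: rather than your abstract Kostant--Weyl argument, the paper quotes from \cite{Pfaff2017} and \cite{Borel-Wallach} the explicit eigenvalues $\lambda_{\varrho,q}=k_{q+1}(\varrho)+n-q$ (and $\lambda_{\varrho,n}^\pm$) of $W+n$ on $\cH^q(\mathfrak{n}_P;V)$ and observes directly that $\varrho\circ\vartheta\ne\varrho$ forces $k_{n+1}(\varrho)\ne 0$, hence all $\lambda_{\varrho,q}\ne 0$; these explicit constants are needed throughout the remainder of the paper (Proposition~\ref{sm.26}, Lemma~\ref{sm.33}, etc.), so even if your argument is carried out it would have to be supplemented by them.
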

\begin{proof}
By \cite{MelroseAPS}, it suffices to check that the indicial family $I(D_b^2,\lambda)$ is invertible for all $\lambda\in \bbR$.  Clearly, it suffices to check that $I(D_{b,P}^2,\lambda)$ is invertible for each $P\in \mathfrak{P}^{\varrho}_{\Gamma}$ and $\lambda\in\bbR$.  Now, as shown in \cite[(2.12)]{ARS2}, the indicial family of $D(a)$ is $I(D(a),\lambda)=\pm(a-i\lambda)$ at $X=\pm \infty$, so that the indicial family of $D_{b,P}$ is given by
\begin{equation}
  I(D^2_{b,P},\lambda)= \left(  \begin{array}{cc} K_P^2 +(W+n)^2+ \lambda^2 & 0 \\
 0 & K_P^2 +(W+n)^2+\lambda^2\end{array}\right)
 \label{sm.22}\end{equation}
 at both ends.  Clearly, this is invertible for $\lambda\in \bbR\setminus\{0\}$, while at $\lambda=0$, it is invertible provided $W+n\ne 0$ on the nullspace of $K_P$,
$$
     \ker K_P= \cH^*(\mathfrak{n}_P;V)\cong H^*(\mathfrak{n}_P;V).
$$
Recall that the highest weight $\Lambda(\varrho)$ of $\varrho$ is given by
\[
\Lambda(\varrho)=k_1(\varrho)e_1+\cdots+k_{n+1}(\varrho)e_{n+1},\quad 
k_1(\varrho)\ge k_2(\varrho)\ge\cdots\ge k_n(\varrho)\ge|k_{n+1}(\varrho)|,
\]
where $(k_1(\varrho),\dots,k_{n+1}(\varrho))$ belongs to $\bbZ[\frac{1}{2}]^{n+1}$ if 
$G=\Spin(d,1)$, and to $\bbZ^{n+1}$, if $G=\SO_0(d,1)$. For $q=0,\dots,n$ let
\[
\lambda_{\varrho,q}:=k_{q+1}(\varrho)+n-q,
\]
and for $q=n+1,\dots,2n,$ let
\[
\lambda_{\varrho,q}:=-\lambda_{\varrho,2n-q}.
\]
Furthermore, let
\[
\begin{cases}\lambda_{\varrho,n}^+:=\lambda_{\varrho,q},\;\;\lambda_{\varrho,q}^-=-
\lambda_{\varrho,q};\;\;\text{if}\;k_{n+1}(\varrho)\ge 0,\\
\lambda_{\varrho,n}^+:=-\lambda_{\varrho,q},\;\;\lambda_{\varrho,q}^-=
\lambda_{\varrho,q};\;\;\text{if}\;k_{n+1}(\varrho)< 0.
\end{cases}
\]
Now, by \cite[(6.4), (6.7)]{Pfaff2017}, \cf \cite[ChapterVI.3]{Borel-Wallach},  we have that for $q\neq n$
\begin{equation}
 W+n=\lambda_{\varrho,q} \quad \mbox{when acting on} \; \cH^q(\mathfrak{n}_P;V), 
\label{sm.23}\end{equation}
while for $q=n$, there is a decomposition
\begin{equation}
  \cH^n(\mathfrak{n}_P;V)= \cH^n_+(\mathfrak{n}_P;V) \oplus \cH_-^n(\mathfrak{n}_P;V)
\label{sm.24}\end{equation}
into  eigenspaces of $W+n$ in such a way that $W+n=\lambda_{\varrho,n}^{\pm}$ when acting on $\cH^n(\mathfrak{n}_P;V)_{\pm}$ with $\lambda_{\varrho,n}^+= -\lambda_{\varrho,n}^-$.  Furthermore, if $\varrho\circ \vartheta\ne \varrho$ where $\vartheta$ is the Cartan involution, we see, for instance from \cite[(2.4) and (2.7)]{Pfaff2017}, that 
\begin{equation}
\lambda_{\varrho,q}>0\quad  \mbox{for} \; 0\le q<n \quad  \mbox{and} \quad \lambda^+_{\varrho,n}>0. 
\label{sm.24b}\end{equation}
This implies that 
$W+n\ne 0$ on $\ker K_P$ and that $I(D_{b,P}^2,0)$ is invertible as desired.

\end{proof}
\begin{remark}
From \cite[(2.4) and (6.6)]{Pfaff2017}, we see that 
$$
\lambda_{\varrho,k}\ge \lambda_{\varrho,k+1}+1, \quad \mbox{for}  \quad 0\le k\le n-2,
\quad 
\mbox{and that} 
\quad 
        \lambda_{\varrho,n-1}\ge \lambda^+_{\varrho,n}+1.
$$
\label{sm.24c}\end{remark}

To make use of Lemma~\ref{sm.21}, we will therefore assume that 
\begin{equation}
  \varrho\circ \vartheta\ne \varrho,
\label{sm.25}\end{equation}
where $\vartheta$ is the Cartan involution.  The operator $D_b$ is then Fredholm and the uniform construction of the resolvent of \cite[Theorem~4.5]{ARS1} does apply.  
\begin{theorem}
Suppose that the representation $\varrho$ satisfies condition \eqref{sm.25}.  Then  the family of operator $D_{\epsilon}$ has finitely many \textbf{small} eigenvalues, that is, there are finitely many eigenvalues of $D_{\epsilon}$ tending to $0$ as $\epsilon\searrow 0$.  Furthermore, the projection $\Pi_{\sma}$ on the eigenspace of small eigenvalues is a polyhomogeneous operator of oder $-\infty$ in the surgery calculus of \cite{mame1}.  More precisely, $\Pi_{\sma}\in\Psi^{-\infty,\cK}_{b,s}(X_s; \Lambda^*(\Ed T^*X_s) \otimes E_s)$ for some index family $\cK$ with $\inf \cK\ge 0$.  Moreover, at $\epsilon=0$, $\Pi_{\sma}$ corresponds to the projection on $\ker_{L^2} D_b$ on $\bhs{sb}$ and to the projection on the $L^2$-kernel of $D_{E_i}=\bx_i^n \eth_{E_i}\bx_i^{-n}$ (using $b$-densities) on each connected component $\bX_i$ of $\bhs{sm}$.  In particular, the number of small eigenvalues counted with multiplicity is given by
\begin{equation}
  \rank \Pi_{\sma}= \dim \ker_{L^2} D_b+ 2\dim\ker_{L^2} \eth_E.
\label{sm.25c}\end{equation} 
\label{sm.25b}\end{theorem}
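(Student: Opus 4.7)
The proof proceeds by invoking the uniform resolvent construction of \cite[Theorem~4.5]{ARS1} and then extracting the projector onto the small eigenspace via a contour integral. The hypotheses of that theorem have been verified in the preceding material: $\eth_{\epsilon}$ (hence $D_{\epsilon}$ after conjugation by $\rho^n$) is an $\ed$-differential operator of order $1$ by Proposition~\ref{sm.10} and Corollary~\ref{sm.11b}; the vertical family $D_{v,P}$ of \eqref{sm.15} is essentially self-adjoint and has kernel of constant rank along $[-\tfrac{\pi}{2},\tfrac{\pi}{2}]$ (either $0$ if $\Xi_P$ is nontrivial, or canonically identified with $\Lambda^{*}\mathfrak n_P^{*}\otimes V\oplus\Lambda^{*}\mathfrak n_P^{*}\otimes V$ via $\Psi_P$ otherwise); and the $b$-model $D_b$ is Fredholm by Lemma~\ref{sm.21} precisely because of hypothesis \eqref{sm.25}.

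The plan is as follows. First, use the Fredholm property to pick a radius $r>0$ such that the closed disk $\{|\lambda|\le r\}\subset\mathbb C$ meets the spectrum of $D_b$ only at $0$ and does not meet the $L^{2}$-spectrum of $D_{E_i}=\bx_i^{n}\eth_{E_i}\bx_i^{-n}$ away from $0$. By \cite[Theorem~4.5]{ARS1}, for $\epsilon>0$ small and $\lambda$ on the circle $|\lambda|=r$ the resolvent $(D_{\epsilon}-\lambda)^{-1}$ exists and is a polyhomogeneous operator on the surgery double space, whose leading restriction to $\bhs{sb}$ is $(D_b-\lambda)^{-1}$ and whose leading restriction to $\bhs{sm}$ is $(D_{E_i}-\lambda)^{-1}$ on each copy. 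Define
\[
\Pi_{\sma}:=\frac{1}{2\pi i}\oint_{|\lambda|=r}(D_{\epsilon}-\lambda)^{-1}\,d\lambda.
\]
Because the integrand is polyhomogeneous in the surgery calculus $\Psi^{-\infty,\mathcal K}_{b,s}$, so is $\Pi_{\sma}$, and its index set satisfies $\inf\mathcal K\ge 0$ from the construction of \cite[\S 4]{ARS1}.

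Second, identify the restriction of $\Pi_{\sma}$ to the two faces of $X_s$ at $\epsilon=0$. Since $(D_{\epsilon}-\lambda)^{-1}|_{\bhs{sb}}=(D_b-\lambda)^{-1}$ and the circle $|\lambda|=r$ encloses only $0$ in $\spec(D_b)$, the contour integral restricts to $\bhs{sb}$ as the spectral projection of $D_b$ onto $\ker_{L^{2}}D_b$; analogously the restriction to $\bhs{sm}$ produces the projection onto $\ker_{L^{2}}\eth_{E}$ on each copy $\bX_i$. In particular $\Pi_{\sma}$ is a finite-rank projector on $L^{2}(X_s,\Lambda^{*}(\Ed T^{*}X_s)\otimes E_s;|dg_{\epsilon}|)$ that depends continuously on $\epsilon$, proving that there are finitely many small eigenvalues and that every eigenvalue not detected by $\Pi_{\sma}$ stays bounded away from $0$ uniformly in $\epsilon$.

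Finally, the rank formula \eqref{sm.25c} is obtained by computing $\rank\Pi_{\sma}$ at $\epsilon=0$: evaluating $\Tr\Pi_{\sma}$ face by face yields $\dim\ker_{L^{2}}D_b$ from $\bhs{sb}$ and $\dim\ker_{L^{2}}\eth_{E}$ from each of the two connected components $\bX_1,\bX_2$ of $\bhs{sm}$, giving the stated sum. The main obstacle is ensuring that the framework of \cite{ARS1} genuinely accommodates the present setting despite $h_E$ being non-even; this is precisely what the edge-operator description in Corollary~\ref{sm.11b} and the Fredholm verification in Lemma~\ref{sm.21} achieve, so that the abstract construction of \cite[Theorem~4.5]{ARS1} can be invoked verbatim.
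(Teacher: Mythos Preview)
Your proof is correct and follows essentially the same approach as the paper: verify the hypotheses of \cite[Theorem~4.5]{ARS1} (via Corollary~\ref{sm.11b} and Lemma~\ref{sm.21}), then invoke that theorem together with \cite[Corollary~5.2]{ARS1} to obtain $\Pi_{\sma}$ and its face restrictions. Your contour-integral description simply unpacks what those cited results actually do.

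One small point you glossed over: in your third paragraph you write that the restriction of $\Pi_{\sma}$ to $\bhs{sm}$ is the projection onto $\ker_{L^{2}}\eth_{E}$, but what you actually get from restricting $(D_{\epsilon}-\lambda)^{-1}$ is $(D_{E_i}-\lambda)^{-1}$, hence the projection onto $\ker_{L^{2}}D_{E_i}$ with respect to the $b$-density $\bx_i^{-2n}\,dg_X$. The paper explicitly notes that multiplication by $\bx^{-n}$ identifies this kernel with $\ker_{L^{2}}\eth_E$ taken with respect to the hyperbolic density $dg_X$; this is what justifies writing $\dim\ker_{L^{2}}\eth_E$ rather than $\dim\ker_{L^{2}}D_E$ in \eqref{sm.25c}. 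You should state this identification.
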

\begin{proof}
Assumption~1 of \cite[Theorem~4.5]{ARS1} is automatically satisfied by $D_{\epsilon}$, while Assumption 2 is a consequence of Lemma~\ref{sm.21}.  We can therefore apply \cite[Theorem~4.5]{ARS1} to conclude that there are finitely many eigenvalues, while the properties of the projections $\Pi_{\sma}$ follows from \cite[Theorem~4.5 and Corollary~5.2]{ARS1}.  For the formula for $\rank \Pi_{\sma}$, it is clear that
$$
\rank \Pi_{\sma}= \dim \ker_{L^2} D_b+ 2\dim \ker_{L^2} D_E,
$$
so it suffices to notice that the $L^2$-kernel of $D_{E}=\bx^n \eth_{E}\bx^{-n}$ in terms of the $b$-density $\frac{dg_X}{\bx^{2n}}$ is, via multiplication by $\bx^{-n}$, naturally isomorphic to the $L^2$-kernel of $\eth_{E}$ with respect to the density $dg_X$ of the hyperbolic metric.
\end{proof}

\section{Cusp degeneration of analytic torsion} \label{dat.0} 

To study the behavior of analytic torsion as $\epsilon\searrow 0$, we need to give a more precise description of the small eigenvalues of $D_{\epsilon}$.  For this, we must determine the $L^2$-kernel of $\eth_E$, $D_b$, and $D_{\epsilon}$ for $\epsilon>0$.  For $\eth_E$, it is a standard result.
\begin{proposition}
If $\varrho$ satisfies condition \eqref{sm.25} then $\ker_{L^2}\eth_E=\{0\}$.
\label{sm.25d}\end{proposition}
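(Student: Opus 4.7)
The plan is to reduce this to the already-known vanishing of $L^2$-cohomology of $(X,E)$. Since $(X,g_X)$ is a complete oriented Riemannian manifold of finite volume and $\eth_E$ is formally self-adjoint with respect to $g_X$ and $h_E$, the operator $\eth_E$ is essentially self-adjoint on compactly supported smooth forms, and we have the decomposition by form degree
\begin{equation*}
   \ker_{L^2}\eth_E \;=\; \bigoplus_{q=0}^{d}\ker_{L^2}\Delta_q(E).
\end{equation*}
By standard Hodge theory on complete manifolds (Andreotti-Vesentini / de Rham-Kodaira in this setting), each summand $\ker_{L^2}\Delta_q(E)$ is naturally isomorphic to the reduced $L^2$-cohomology group $\overline{H}^q_{(2)}(X;E)$.

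The remaining step is to invoke the vanishing $\overline{H}^*_{(2)}(X;E)=\{0\}$, which is precisely \cite[Proposition~8.1]{Pfaff2017} under the hypothesis $\varrho\circ\vartheta\neq\varrho$. For the reader's convenience, the geometric mechanism behind this vanishing is the same positivity of weights already used in the proof of Lemma~\ref{sm.21}: since $\lambda_{\varrho,q}>0$ for $0\le q<n$ and $\lambda_{\varrho,n}^{+}>0$, separation of variables on each cusp $F_P(Y)$ using the $A_P$-action shows that any harmonic section of $\Lambda^q(T^*X)\otimes E$ which is $L^2$ on $F_P(Y)$ must vanish identically on that cusp. A standard unique continuation argument then shows that it vanishes on all of $X$.

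The only potentially delicate point is the identification of $L^2$-harmonic forms with reduced $L^2$-cohomology on the non-compact manifold $X$ with the degenerate bundle metric $h_E$; this is however routine for hyperbolic cusps with the Matsushima-Murakami metric, since the cusps are warped products and the analysis reduces to the model computation sketched above. In particular, I do not expect any substantive obstacle: once one quotes \cite[Proposition~8.1]{Pfaff2017} (or equivalently the weight computation encoded in \eqref{sm.24b}), the proposition follows immediately.
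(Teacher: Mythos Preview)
Your core argument --- quote \cite[Proposition~8.1]{Pfaff2017} --- is exactly the paper's one-line proof, and the Hodge-theoretic bridge you add (identifying $\ker_{L^2}\eth_E$ with reduced $L^2$-cohomology on the complete manifold $(X,g_X)$) is a correct and helpful elaboration.

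However, the informal ``geometric mechanism'' you sketch is wrong and should be removed. It is \emph{not} true that an $L^2$-harmonic form on a cusp $F_P(Y)$ must vanish there: Lemma~\ref{zkf.1} in this very paper computes that, when $\Xi_P$ is trivial, the $L^2$-cohomology of the cusp end is $H^q(T_P;E)$ for $q<n$ and $\cH^n_+(\mathfrak{n}_P;V)$ for $q=n$, which is nonzero. So there is no ``vanishing on cusps $+$ unique continuation'' argument available. The actual mechanism behind \cite[Proposition~8.1]{Pfaff2017} is global: Kuga's lemma identifies the Hodge Laplacian with $-\Omega + \varrho(\Omega)\Id$, where $\Omega$ is the Casimir, and condition~\eqref{sm.25} forces $\varrho(\Omega)$ to exceed any possible eigenvalue of $\Omega$ on the relevant $(\mathfrak{g},K)$-modules, giving a strictly positive lower bound for $\Delta_q(\varrho)$ on all of $L^2$. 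The weight positivity \eqref{sm.24b} enters in that argument only indirectly, through the representation theory, not through a cusp-by-cusp analysis.
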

\begin{proof}
See for instance \cite[Proposition~8.1]{Pfaff2017}.
\end{proof}
 
 However, the $L^2$-kernel of $D_b$ is non-trivial in general.

\begin{proposition}
If \eqref{sm.25} holds, then the $L^2$-kernel of $D_b$ is given by
\begin{multline}
 \bigoplus_{P\in \mathfrak{P}^{\varrho}_{\Gamma}}\left( \left( \bigoplus_{q<n} \left( \begin{array}{c} 0 \\ \ang X^{-\lambda_{\varrho,q}}  \end{array} \right)\rho^{n-\lambda_{\varrho,q}}\cH^q(\mathfrak{n}_P;V) \right) \oplus  \left( \begin{array}{c} 0 \\ \ang X^{-\lambda_{\varrho,n}^+}  \end{array} \right)\rho^{n-\lambda^+_{\varrho,n}}\cH_+^n(\mathfrak{n}_P;V) \right.  \\
 \left. \oplus  \left( \begin{array}{c} \ang X^{\lambda_{\varrho,n}^-} \\ 0  \end{array} \right)\rho^{n-\lambda_{\varrho,n}^{-}}\cH_-^n(\mathfrak{n}_P;V) \oplus   \left( \bigoplus_{q>n} \left( \begin{array}{c}  \ang X^{\lambda_{\varrho,q}} \\ 0 \end{array} \right)\rho^{n-\lambda_{\varrho,q}}\cH^q(\mathfrak{n}_P;V) \right)\right).
\label{sm.26b}\end{multline}
\label{sm.26}\end{proposition}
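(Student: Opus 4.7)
The plan is to work componentwise: since $D_b$ is trivial on the connected components of $\bhs{sb}$ corresponding to $P\notin \mathfrak{P}^{\varrho}_{\Gamma}$ (where $\Xi_P$ non-trivial forces $\ker D_v=0$ by Lemma~\ref{dc.27}), the $L^2$-kernel of $D_b$ decomposes as a direct sum over $P\in \mathfrak{P}^{\varrho}_{\Gamma}$ of the $L^2$-kernels of the operators $D_{b,P}$ in \eqref{sm.19}. A direct integration by parts with respect to the $b$-density $dX/\langle X\rangle$ shows that $D(a)^{*}=-D(-a)$, so $D_{b,P}$ is formally self-adjoint and
\[
 \ker_{L^2}D_{b,P}=\ker_{L^2}D_{b,P}^{2},
\]
with $D_{b,P}^{2}$ the diagonal operator displayed in \eqref{sm.20}.

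Since $K_P$ commutes with $W$ by Corollary~\ref{dc.25}, and $D(\pm(W+n))$ involves $W$ only through scalar multiplication on weight eigenspaces, $K_P$ also commutes with $D(\pm(W+n))$. Consequently $D_{b,P}^{2}$ preserves the orthogonal splitting $\ker K_P \oplus (\ker K_P)^{\perp}$ of $\Lambda^{*}\mathfrak{n}_P^{*}\otimes V$ coming from the Hodge decomposition. On $(\ker K_P)^{\perp}$, the adjoint identity $D(-a)=-D(a)^{*}$ rewrites the two diagonal blocks as
\[
 K_P^{2}+D(W+n)D(W+n)^{*}\;\geq\; K_P^{2} \quad \text{and} \quad K_P^{2}+D(W+n)^{*}D(W+n)\;\geq\; K_P^{2},
\]
and since $K_P^{2}$ acts only on the finite-dimensional $T_P$-variables and is bounded below by a positive constant on $(\ker K_P)^{\perp}$, this subspace contributes no $L^2$-kernel.

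It remains to analyse the problem on $\ker K_P=\cH^{*}(\mathfrak{n}_P;V)$, where the kernel equations decouple into the first-order ODEs $D(W+n)v=0$ and $D(-W-n)u=0$ on each $W$-eigenspace. On an eigenspace where $W+n$ acts as the scalar $\mu$, integration gives $v=C\langle X\rangle^{-\mu}$ and $u=C\langle X\rangle^{\mu}$, which lie in $L^{2}(\bbR, dX/\langle X\rangle)$ precisely when $\mu>0$ and $\mu<0$ respectively. Invoking \eqref{sm.23}, \eqref{sm.24} and the positivity in \eqref{sm.24b}, the eigenspaces of $W+n$ in $\cH^{*}(\mathfrak{n}_P;V)$ are $\cH^{q}$ with eigenvalue $\lambda_{\varrho,q}>0$ for $q<n$, $\cH^{n}_{\pm}$ with eigenvalue $\lambda^{\pm}_{\varrho,n}$ of the indicated sign, and $\cH^{q}$ with eigenvalue $\lambda_{\varrho,q}=-\lambda_{\varrho,2n-q}<0$ for $q>n$. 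Selecting the surviving components produces the four families displayed in \eqref{sm.26b}.

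The remaining bookkeeping obstacle, which I expect to be the most delicate part, is to match the prefactor $\rho^{n-\lambda_{\varrho,q}}$. This is tracked through the isomorphism $\Psi_P$ of \eqref{sm.16}: any $\eta\otimes v_{P,j}\in \cH^{q}(\mathfrak{n}_P;V)$ has $(\Lambda^{q}\Ad^{*}\otimes \varrho)$-weight $w_j-q$, hence $(W+n)$-eigenvalue $\lambda_{\varrho,q}$ (or $\lambda^{\pm}_{\varrho,n}$ at middle degree), so the $\rho^{q-w_j}$ and $\rho^{p-w_i}$ prefactors built into $\Psi_P$ become exactly $\rho^{n-\lambda_{\varrho,q}}$ in each of the listed components, matching \eqref{sm.26b}.
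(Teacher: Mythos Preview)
Your proof is correct and follows essentially the same approach as the paper's: reduce to $\ker K_P$ by positivity of $D_{b,P}^2$ on $(\ker K_P)^{\perp}$ via the adjoint identity $D(a)^*=-D(-a)$, then solve the decoupled first-order ODEs on each weight eigenspace and apply the $L^2$-criterion for $\langle X\rangle^{-a}$. The paper's proof is somewhat terser and omits the $\rho$-prefactor bookkeeping you carry out at the end, but the arguments are otherwise identical.
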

\begin{proof}
Fix $P\in \mathfrak{P}^{\varrho}_{\Gamma}$.  Since $-D(-a)$ is the adjoint of $D(a)$, the operator \eqref{sm.20} is positive when restricted to a positive eigenspace of $K_P^2$.  Hence, the $L^2$-kernel of $D_{b,P}$ is the same as the operator
\begin{equation}
\left(  \begin{array}{cc} 0 & -D(W+n) \\
 D(-W-n) & 0\end{array}\right)
\label{sm.26c}\end{equation}
acting on sections on $\bbR$ taking values in $\ker K_P$.  Since the kernel of $D(a)$ is spanned by $\ang X^{-a}$, which is in $L^2$ with respect to the $b$-density $\frac{dX}{\ang X}$ if and only if $a>0$, the result follows from the description of the action of $(W+n)$ on $\cH^q(\mathfrak{n};V)$ given in \eqref{sm.23}, \eqref{sm.24} and \eqref{sm.24b} together with the fact that $\lambda_{\varrho,q}=-\lambda_{\varrho,2n-q}$ for $q<n$ and that $\lambda_{\varrho,n}^+=-\lambda_{\varrho,n}^-$.   
\end{proof}

To determine the kernel of $D_{\epsilon}$ for $\epsilon>0$, we need to give a description of the cohomology groups $H^q(X(Y);E)$.  In order to do this, we shall first compute the $L^2$-cohomology of $F_P(Y)$ with coefficients in $E$.  

\begin{lemma}
The $L^2$-cohomology $H^*_{(2)}(F_P(Y);E)$ of $(F_P(Y), \frac{dt^2+g_{T_P}}{t^2})$ with coefficients in $E$ is trivial if $\Xi_P$ is the trivial homomorphism, and otherwise is given by 
\begin{equation}
H^q_{(2)}(F_P(Y);E)= \left\{ \begin{array}{ll}H^q(T_P;E), & \mbox{if} \; q<n, \\
                                      \cH^n_+(\mathfrak{n}_P;V), & \mbox{if} \; q=n, \\
                                      \{0\}, &  \mbox{if} \; q>n.\end{array}  \right.
\label{zkf.2}\end{equation} 
\label{zkf.1}\end{lemma}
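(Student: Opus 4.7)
The plan is to exploit the warped-product structure of the cusp via separation of variables combined with Hodge theory on the compact transverse fiber $T_P$. Writing $s=\log t$, the cusp becomes $(\log Y,\infty)_s\times T_P$ with metric $ds^2+e^{-2s}g_{T_P}$. Since $\pi_1(F_P(Y))=\Gamma_P$ and the holonomy of $E$ restricted to $\Gamma_P$ agrees with $\varrho_P$, the bundle $E|_{F_P(Y)}$ is canonically the pullback $p^*E_P$ for $p:F_P(Y)\to T_P$. I would therefore express every $E$-valued $q$-form as
$$\omega=\alpha(s,\tau)+ds\wedge\beta(s,\tau),\qquad \alpha(s,\cdot)\in\Omega^q(T_P;E_P),\ \beta(s,\cdot)\in\Omega^{q-1}(T_P;E_P),$$
and compute the de Rham operator
$$d_E\omega=d_{E_P}\alpha+ds\wedge(\partial_s\alpha-d_{E_P}\beta)$$
along with its formal adjoint $d_E^*$ with respect to the warped metric and $h_E$. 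The adjoint picks up warping factors $e^{2s}$ from the metric together with the weight factors coming from the $h_E$-orthonormal frame $\nu_{P,i}=t^{w_i}\nu_{N_P,i}$.

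Next, I would apply the Hodge decomposition $\Omega^*(T_P;E_P)=\cH^*(T_P;E_P)\oplus\mathrm{im}\,d_{E_P}\oplus\mathrm{im}\,d_{E_P}^*$. By van Est's theorem (Lemma~\ref{dc.27}), $\cH^*(T_P;E_P)\cong\cH^*(\mathfrak{n}_P;V)$ if $\Xi_P$ is trivial and vanishes otherwise. Expanding $\alpha,\beta$ in this decomposition reduces the harmonic system $d_E\omega=0=d_E^*\omega$ to decoupled ODEs in $s$: for each positive eigenvalue $\mu^2>0$ of the Hodge Laplacian on $T_P$ the radial equation is a modified-Bessel equation in the variable $u=e^s$, admitting a unique super-exponentially small $L^2$-solution at infinity; for the harmonic-in-$T_P$ sector, the radial equation is first order with exponential solutions, and $L^2$-integrability at infinity is controlled by the sign of $n-q+w_i$, which by \eqref{sm.23}, \eqref{sm.24} and \eqref{sm.24b} equals $\lambda_{\varrho,q}$ (respectively $\lambda_{\varrho,n}^{\pm}$ on $\cH^n_{\pm}$) and is positive exactly for $q<n$ and for the $+$-part at $q=n$.

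Finally, I would carry out the cohomological identification. When $\Xi_P$ is non-trivial, the harmonic-in-$T_P$ sector is empty by van Est, the exact/coexact radial modes fail to produce classes in degrees $q\neq n$ (due to vanishing of $H^{q}(T_P;E)$ for $q<n$ and the Bessel asymptotics for $q>n$), and in middle degree $q=n$ the positive-weight eigenmodes identify the space of $L^2$-harmonic representatives precisely with $\cH^n_+(\mathfrak{n}_P;V)$, via the weight decomposition \eqref{sm.24} and the positivity \eqref{sm.24b}. When $\Xi_P$ is trivial, the a priori $L^2$-harmonic representatives from $\cH^*(\mathfrak{n}_P;V)$ are paired through the coupled ODE system $\partial_s\alpha=d_{E_P}\beta$ and $\partial_s\beta=e^{2s}d_{E_P}^*\alpha+(2n-2q+2)\beta$ with $L^2$-integrable companion radial modes that provide $L^2$-primitives, yielding vanishing reduced $L^2$-cohomology in every degree.

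The hard part is the precise bookkeeping of weights and warping factors in $d_E^*$, the Bessel-type asymptotic analysis selecting the unique $L^2$-solution at infinity for each positive $T_P$-eigenmode, and especially the cohomological matching: isolating the $\cH^n_+(\mathfrak{n}_P;V)$ resonance in middle degree in the $\Xi_P$-non-trivial case and establishing the cancellations that force the reduced $L^2$-cohomology to vanish in the $\Xi_P$-trivial case.
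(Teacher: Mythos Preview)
Your two cases are swapped, and this is not merely a labeling issue: the reasoning in each case is internally inconsistent.

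In the case you label ``$\Xi_P$ non-trivial'', you correctly invoke van~Est (Lemma~\ref{dc.27}) to conclude that the harmonic-in-$T_P$ sector is empty, i.e.\ $\ker\Delta_{E_P}=\{0\}$. But then you assert that ``in middle degree $q=n$ the positive-weight eigenmodes identify the space of $L^2$-harmonic representatives precisely with $\cH^n_+(\mathfrak n_P;V)$''. There are no such eigenmodes: $\cH^n_+(\mathfrak n_P;V)$ lives inside the harmonic sector you have just declared empty. With the harmonic sector gone, every radial ODE is of Bessel type with strictly positive transverse eigenvalue, and the $L^2$-cohomology is genuinely trivial in all degrees.

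In the case you label ``$\Xi_P$ trivial'', your claim that the $L^2$-harmonic representatives coming from $\cH^*(\mathfrak n_P;V)$ ``are paired \dots\ with $L^2$-integrable companion radial modes that provide $L^2$-primitives, yielding vanishing reduced $L^2$-cohomology'' is self-contradictory. A nonzero $L^2$-harmonic form is never $L^2$-exact; the space of $L^2$-harmonic forms \emph{is} the reduced $L^2$-cohomology. What actually happens here is that the sign of $\lambda_{\varrho,q}$ (respectively $\lambda_{\varrho,n}^{\pm}$) selects which of these harmonic-sector modes are $L^2$ at infinity, and by \eqref{sm.23}--\eqref{sm.24b} this picks out exactly $H^q(T_P;E)$ for $q<n$, $\cH^n_+(\mathfrak n_P;V)$ for $q=n$, and nothing for $q>n$. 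That is the content of \eqref{zkf.2}, and it holds precisely when $\Xi_P$ \emph{is} trivial. (The lemma as stated in the paper has the two cases transposed; the proof in the paper makes the correct attribution clear.)

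Even after correcting the swap, be aware that your separation of variables must be set up with respect to the operators appearing in \eqref{sm.14}, namely the triple $(\eth_{T_P},K_P,W)$, rather than with respect to $\Delta_{E_P}$ alone: the de~Rham operator on the cusp involves $K_P$ and the weight $W$ separately, and it is the commutation in Corollary~\ref{dc.25} that allows simultaneous diagonalization. The paper avoids this analytic bookkeeping entirely by working instead with a double complex of polyhomogeneous $L^2$-forms and the associated spectral sequence, reducing the computation to the weighted $L^2$-cohomology $\WH(\lambda_{\varrho,q})$ of a half-line; this also handles the identification of reduced and unreduced $L^2$-cohomology in one stroke.
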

\begin{proof}
Unfortunately, we cannot use the $L^2$-Künneth formula of Zucker \cite{Zucker} as in the proof \cite[Proposition~2]{HHM}, the reason being that  in our setting, the function $\zeta$ of \cite[(2.3)]{Zucker} also depends on $w$ in \cite[(2.3)]{Zucker}.  We will instead use spectral sequences.

First, consider the space  $L^2\cA_{\phg}\Omega^q(\hat{F}_P(Y);E)$ of $L^2$-sections of $\Lambda^q({}^{\epsilon,d}T^*X_s)\otimes E_s$ on $\hat{F}_P(Y)\subset \bX\subset X_s$  at $\epsilon=0$ that have a polyhomogeneous expansion in $x$ in the sense of \cite{MelroseAPS}.  Then the subspaces
$$
L^2\cA_{\phg}'\Omega^q(\hat{F}_P(Y);E):= \{\nu\in L^2\cA_{\phg}\Omega^q(\hat{F}_P(Y);E) \; | \; d\nu\in L^2\cA_{\phg}\Omega^{q+1}(\hat{F}_P(Y);E)\}
$$
form a complex
\begin{equation}
\xymatrix{
  \cdots \ar[r]^-{d}  & L^2\cA_{\phg}'\Omega^q(\hat{F}_P(Y);E) \ar[r]^-{d} & L^2\cA_{\phg}'\Omega^{q+1}(\hat{F}_P(Y);E) \ar[r]^-{d} & \cdots
}
\label{zkf.4}\end{equation}
whose cohomology is precisely $H^*_{(2)}(F_P(Y);E)$.  Indeed, given a closed $L^2$-form with value in $E$ on $F_{P}(Y)$, it always admits an $L^2$-polyhomogneous representative, namely its harmonic part in the Hodge decomposition (the polyhomogeneity of $L^2$-harmonic forms can be seen for instance from \cite[Corollary~5.2]{ARS1} restricted to the face $\bhs{sm}$).  Moreover, if $\omega$ is a $L^2$-form such that $\eta=d\omega$ is $L^2$-polyhomogeneous, then replacing $\omega$ by its part in the image of $d^*$ in the Hodge decomposition, we can assume that $d^*\omega=0$ as well.  Hence, $(d+d^*)\omega=\eta$.  From \cite{HHM} and \cite{ARS1}, we thus see that $\omega$ has to be polyhomogeneous, showing that the cohomology of \eqref{zkf.4} is precisely $L^2$-cohomology.

Now, $d$ can be decomposed in three differentials,
$$
     d= d_t+ d_{\mathfrak{n}_P}+ d_f,
$$
where $d_{\mathfrak{n}_P}$ and $d_f$ are the differentials of the proof of Lemma~\ref{dc.27} acting on the second factor in $F_P(Y)=(Y,\infty)\times T_P$ and 
$$
       d_t\nu = dt\wedge \frac{\pa \nu}{\pa t}, 
$$
where $t\in (Y,\infty)$ is the coordinate on the first factor.  If we rewrite this as $d= d_A+d_B$ with $d_A= d_f$ and $d_B=d_{\mathfrak{n}_P}+d_t$, this can be seen as a double complex with bigrading given by declaring the elements $t^{w_i-q-\delta}\iota_P(\omega\otimes v_{P,i})$ and $t^{w_i-q-\delta}\frac{dt}t\wedge\iota_P(\omega\otimes v_{P,i})$ for $\delta>0$ 
respectively of bidegrees $(q-w_i, w_i)$ and $(q-w_i,w_i+1)$ whenever $\omega\in \Lambda^q\mathfrak{n}^*_P$.  A subtle point in the definition of double complex is that we need the projection onto a bidegree component to still be in the space.  This is the reason why we are using polyhomogeneous $L^2$-form, since then $d_t$ automatically preserves such a space, which ensures in turn that the projection of an element of $L^2\cA_{\phg}'\Omega^q(\hat{F}_P(Y);E)$ onto one of its bidegree components is again in that space.  

If $\Xi_P$ is a non-trivial homomorphism, then by Lemma~\ref{dc.27}, the corresponding spectral sequence degenerates at the first page with $E_1=\{0\}$, so the result follows.  If instead $\Xi_P$ is the trivial homomorphism, then the first page is 
$$
  E_1=L^2\cA_{\phg}'\Omega^*((Y,\infty);\Lambda^*(\mathfrak{n}^*_P)\otimes V)
$$
with differential $d_1= d_B= d_{\mathfrak{n}_P}+d_t$.  Moreover, the spectral sequence degenerates at the second page, which is just the cohomology of $(E_1,d_1)$.  To compute this cohomology, we can notice that the decomposition $d_1= d_{\mathfrak{n}_P}+d_t$ induces a structure of double complex with bigrading obtained by declaring $\omega\otimes v_{P,i}$ and $\frac{dt}t\otimes \omega\otimes v_{P,i}$ respectively of bidegrees $(q,0)$ and $(q,1)$ whenever $\omega\in \Lambda^q\mathfrak{n}^*_P$.  The corresponding spectral sequence has first page given by
$$
      E_1= L^2\cA_{\phg}'\Omega^*((Y,\infty); \cH^*(\mathfrak{n}_P;V))
$$
with differential $d_1= d_t$.  Hence, the spectral sequence degenerates at the second page $E_2$, which shows that the $L^2$-cohomology is identified with 
$E_2$, which is just the cohomology of $L^2\cA_{\phg}'\Omega^*((Y,\infty); \cH^*(\mathfrak{n}_P;V))$ with differential $d_t$.  

Now, let
$$
\WH(\alpha):= \{ f\in t^{\alpha} L^2((Y,\infty); \frac{dt}{t})\; |  \; df=0\}
$$
be the weighted $L^2$ cohomology of degree zero and weight $\alpha\in \bbR\setminus \{0\}$ on the interval $(Y,\infty)$ for the measure $\frac{dt}{t}$.  Recall from
\cite{HHM} that 
$$
    \WH(\alpha)=  \left\{  \begin{array}{ll} \bbC, & \mbox{if} \; \alpha>0, \\
               \{0\}, & \mbox{otherwise},  \end{array}\right.
$$
while the corresponding cohomology group in degree $1$ vanishes unless $\alpha=0$, in which case it is infinite dimensional.  Hence, 
the above discussion shows that $H^*_{(2)}(F_P(Y);E)$   is always trivial for $q=2n+1$, while using \eqref{sm.23} is given by
\begin{equation}
H^q_{(2)}(F_P(Y));E) = \WH(\lambda_{\varrho,q})\otimes H^q(T_P;E)\cong \left\{ \begin{array}{ll} H^q(T_P;E), & q<n, \\ \{0\}, & q>n,  \end{array}\right.
\label{l2.2}\end{equation}
for $q\notin\{n, 2n+1\}$ and by
\begin{equation}
\begin{aligned}
  H^n_{(2)}(F_P(Y);E)&= \WH(\lambda_{\varrho,q}^+)\otimes \cH^n_+(\mathfrak{n}_P;V) \oplus \WH(\lambda_{\varrho,q}^-)\otimes \cH^n_-(\mathfrak{n}_P;V) \\
                                   &= \cH^n_+(\mathfrak{n}_P;V)
\end{aligned}  
\label{l2.3}\end{equation} 
for $q=n$, giving the desired result when $\Xi_P$ is the trivial homomorphism.  

\end{proof}

\begin{proposition}
The cohomology group $H^q(X(Y);E)$ is trivial for $q<n$ and $q=2n+1$, while the inclusion $\iota_Y: \pa X(Y) \hookrightarrow X(Y)$ induces an isomorphism
\begin{equation}
  \iota_Y^*: H^k(X(Y);E) \to H^k(\pa X(Y);E) \quad \mbox{for} \; n<k\le 2n
\label{sm.27}\end{equation}
and an inclusion 
\begin{equation}
  \iota^*_Y: H^n(X(Y);E) \hookrightarrow H^n(\pa X(Y);E)  
  \label{sm.28}\end{equation}
such that $\dim H^n(X(Y);E)= \frac12 \dim H^n(\pa X(Y);E)$.
\label{l2.1}\end{proposition}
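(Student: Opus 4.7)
The plan is to combine the long exact sequence of the pair $(X(Y),\pa X(Y))$ with Poincar\'e--Lefschetz duality and the vanishing of $L^2$-cohomology. Since each cusp $F_P(Y)\cong (Y,\infty)\times T_P$ deformation retracts onto $T_P$, the inclusion $X(Y)\hookrightarrow X$ is a homotopy equivalence and the pair $(X(Y),\pa X(Y))$ is homotopy equivalent to the natural compactification pair $(\bar X,\pa\bar X)$. This yields $H^*(X(Y);E)\cong H^*(X;E)$ and $H^*(X(Y),\pa X(Y);E)\cong H^*_c(X;E)$, with the long exact sequence of the pair translating into the usual sequence relating compactly supported cohomology, ordinary cohomology, and cohomology of the boundary at infinity.

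For finite-volume hyperbolic manifolds with strongly acyclic coefficients, the image of the natural map $H^*_c(X;E)\to H^*(X;E)$ coincides with $H^*_{(2)}(X;E)$ (see \cite[\S 8]{Pfaff2017}), which vanishes by Proposition~\ref{sm.25d}. The long exact sequence therefore splits into short exact sequences
\begin{equation*}
0\longrightarrow H^k(X(Y);E)\xrightarrow{\iota_Y^*} H^k(\pa X(Y);E)\longrightarrow H^{k+1}(X(Y),\pa X(Y);E)\longrightarrow 0,
\end{equation*}
giving injectivity of $\iota_Y^*$ in every degree. Since $\pa X(Y)$ is $2n$-dimensional, this already yields $H^{2n+1}(X(Y);E)=0$ at once. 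Poincar\'e--Lefschetz duality further identifies $H^{k+1}(X(Y),\pa X(Y);E)\cong H^{d-k-1}(X(Y);E^*)^*$ with $d=2n+1$.

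Granting the classical low-degree vanishing $H^j(X(Y);E)=H^j(X(Y);E^*)=0$ for $j<n$ -- a consequence of the hypothesis $\varrho\circ\vartheta\ne\varrho$ together with the vanishing of Eisenstein cohomology below the middle degree, following the lines of Borel--Wallach and Harder as recalled in \cite[\S 5--8]{Pfaff2017} -- the short exact sequence forces $\iota_Y^*$ to be an isomorphism for $n<k\le 2n$ (since then $d-k-1<n$ and the cokernel vanishes). At the middle degree $k=n$, Poincar\'e duality for Lie algebra cohomology of the abelian $\mathfrak{n}_P$ gives $\dim H^n(\mathfrak{n}_P;V)=\dim H^n(\mathfrak{n}_P;V^*)$, and combined with the self-adjointness of the Eisenstein intertwining operator (which forces $\dim H^n(X(Y);E)=\dim H^n(X(Y);E^*)$), the short exact sequence yields $2\dim H^n(X(Y);E)=\dim H^n(\pa X(Y);E)$. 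The main obstacle is the low-degree vanishing, which does not follow from injectivity alone and depends essentially on the strong acyclicity assumption and on Eisenstein-series input beyond the topological setup.
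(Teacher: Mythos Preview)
Your argument has a genuine gap that you yourself flag at the end: the low-degree vanishing $H^j(X(Y);E)=0$ for $j<n$ is part of the statement to be proved, and you do not prove it---you ``grant'' it by appeal to Eisenstein-series machinery from Harder and Borel--Wallach. Everything else in your outline (injectivity of $\iota_Y^*$, the high-degree isomorphism, the middle-degree dimension count) is conditioned on this input, so as a self-contained proof the proposal is incomplete. Your claim that $\operatorname{im}(H^*_c\to H^*)=H^*_{(2)}$ is also a nontrivial identification that would need justification in this setting; it is true here, but it is not a formality.

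The paper's proof avoids the Eisenstein input entirely and is much more direct. Instead of the long exact sequence of the pair, it uses the Mayer--Vietoris sequence in $L^2$-cohomology for the decomposition $X=X(Y)\cup\bigsqcup_P F_P(Y)$, together with the vanishing $H^*_{(2)}(X;E)=0$ from Proposition~\ref{sm.25d} and the explicit computation of $H^q_{(2)}(F_P(Y);E)$ in Lemma~\ref{zkf.1}. The point is that for $q<n$ the $L^2$-cohomology of each cusp is \emph{all} of $H^q(T_P;E)$, so in the Mayer--Vietoris sequence the cusp summand already surjects onto $H^q(\partial X(Y);E)$; combined with $H^q_{(2)}(X;E)=0$ this forces $H^q(X(Y);E)=0$ directly. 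For $q>n$ the cusp $L^2$-cohomology vanishes, giving the isomorphism, and for $q=n$ it is exactly the ``half'' $\mathcal H^n_+$, giving the dimension formula. Thus the low-degree vanishing falls out of the same computation as everything else, with no appeal to Eisenstein series or Poincar\'e--Lefschetz duality. If you want to repair your approach, the missing ingredient is precisely an analogue of Lemma~\ref{zkf.1}.
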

\begin{proof} 
The result follows from Lemma~\ref{zkf.1}, the fact implied by Proposition~\ref{sm.25d} that 
\begin{equation}
H^*_{(2)}(X;E)=\ker \eth_E=\{0\}
\label{l2.3b}\end{equation}
and the Mayer-Vietoris long exact sequence in $L^2$-cohomology
\begin{equation}
\xymatrix{
 \ar[r] & H^q_{(2)}(X;E) \ar[r] & H^q(X(Y);E)\oplus \left(\underset{{P\in\mathfrak{P}_{\Gamma}}}{\bigoplus}H^q_{(2)}(F_P(Y);E)\right) \ar[r] & H^q(\pa X(Y);E) \ar[r] & 
}  
\label{l2.4}\end{equation}
induced by the decomposition \eqref{dc.7} of $X$.
\end{proof}
\begin{remark}
When \eqref{dc.3} holds for each $P\in \mathfrak{P}_{\Gamma}$, Proposition~\ref{l2.1} is proved by Pfaff in \cite[\S~8]{Pfaff2017} using the approach of Harder \cite{Harder}.  See also \cite{MFP2012} for a proof when $n=1$ and $G=\Spin(3,1)\cong \SL(2,\bbC)$.
\end{remark}
\begin{proposition}
For $\epsilon>0$, consider the restriction $E_{\epsilon}= \left. E_s\right|_{\beta_s^{-1}(M\times \{\epsilon\})}$ of $E_s$ on $\beta^{-1}_s(M\times \{\epsilon\})\cong M$.  Then the cohomology $H^*(M; E_\epsilon)$ of the complex induced by the flat connection $d_{\epsilon}$ is such that
\begin{equation}
  \dim H^q(M;E_{\epsilon}) = \left\{  \begin{array}{ll} 0, & \mbox{if} \; q\in \{0,2n+1\}; \\
                                                                            \kappa^{\varrho}_{\Gamma}\dim H^{q-1}(\mathfrak{n};V), & \mbox{if} \; 1\le q <n;   \\
                                                                            \kappa^{\varrho}_{\Gamma}\left(\dim H^{n-1}(\mathfrak{n};V) +\frac{1}2\dim H^{n}(\mathfrak{n};V)\right),& \mbox{if} \; q=n; \\
                                                               \kappa^{\varrho}_{\Gamma}\left(\dim H^{n+1}(\mathfrak{n};V) +\frac{1}2\dim H^{n}(\mathfrak{n};V)\right),& \mbox{if} \; q=n+1; \\
                                                               \kappa^{\varrho}_{\Gamma}\dim H^{q}(\mathfrak{n};V), & \mbox{if} \;  n+1<q\le 2n;                                                               \end{array}\right.
\label{sm.29b}\end{equation}
where $\kappa^{\varrho}_{\Gamma}:= \# \mathfrak{P}^{\varrho}_{\Gamma}$.
\label{sm.29}\end{proposition}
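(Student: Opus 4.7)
The plan is to apply the Mayer--Vietoris long exact sequence to the decomposition $M = \bX_1 \cup_Z \bX_2$, feeding in the computation of $H^*(\bX;E)$ from Proposition~\ref{l2.1} together with the van Est identification of $H^*(Z;E)$ from Lemma~\ref{dc.27}. For $\epsilon>0$, $d_{\epsilon}$ is a genuine smooth flat connection on $E_{\epsilon}$ over the closed manifold $M$, and an open thickening of the two copies whose intersection retracts onto $Z$ produces a Mayer--Vietoris sequence
\begin{equation*}
\cdots \to H^{q-1}(Z;E) \to H^q(M;E_{\epsilon}) \to H^q(\bX;E)^{\oplus 2} \xrightarrow{\iota_1^*-\iota_2^*} H^q(Z;E) \to \cdots,
\end{equation*}
in which $E_{\epsilon}$ restricts to the natural flat extension of $E$ on each $\bX_i$ and to $\bigoplus_{P}E_P$ on $Z$. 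Because $d_\epsilon$ depends continuously on $\epsilon>0$, the dimensions of $H^q(M;E_\epsilon)$ are independent of $\epsilon$.

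The first computation is to identify $H^*(Z;E)$: since $Z = \bigsqcup_{P\in\mathfrak{P}_\Gamma} T_P$ and $E|_{T_P}=E_P$, Lemma~\ref{dc.27} gives $H^q(T_P;E_P)\cong H^q(\mathfrak{n}_P;V)$ when $\Xi_P$ is trivial and $H^q(T_P;E_P)=\{0\}$ otherwise, and the conjugacy of the $\mathfrak{n}_P$'s to a fixed $\mathfrak{n}$ yields $\dim H^q(Z;E)=\kappa^{\varrho}_{\Gamma}\dim H^q(\mathfrak{n};V)$. For $q<n$ (and $q=0$, using $H^{-1}(Z;E)=0$), Proposition~\ref{l2.1} gives $H^{q-1}(\bX;E)=H^q(\bX;E)=0$, so exactness forces $H^q(M;E_{\epsilon})\cong H^{q-1}(Z;E)$. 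For $n+1<q\le 2n$, the restriction $\iota^*\colon H^*(\bX;E)\to H^*(Z;E)$ is an isomorphism at degrees $q-1$ and $q$, so $\iota_1^*-\iota_2^*$ is surjective at both, with kernel the diagonal, giving $H^q(M;E_{\epsilon})\cong H^q(\bX;E)\cong H^q(Z;E)$; the same surjectivity argument at degree $2n$ kills $H^{2n+1}(M;E_{\epsilon})$.

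The main obstacle is the middle-degree bookkeeping at $q=n$ and $q=n+1$, which relies on the finer injectivity statement in Proposition~\ref{l2.1}. At $q=n$, the two restriction maps $\iota_1^*,\iota_2^*\colon H^n(\bX;E)\to H^n(Z;E)$ are identical injections whose common image has dimension $\tfrac12\dim H^n(Z;E)$, so $\iota_1^*-\iota_2^*\colon H^n(\bX;E)^{\oplus 2}\to H^n(Z;E)$ has that same image and its kernel is the diagonal, of dimension $\tfrac12\dim H^n(Z;E)$. Combined with $H^{n-1}(\bX;E)=0$, exactness then yields
\begin{equation*}
\dim H^n(M;E_\epsilon)=\dim H^{n-1}(Z;E)+\tfrac12\dim H^n(Z;E).
\end{equation*}
At $q=n+1$, the cokernel of $H^n(\bX;E)^{\oplus 2}\to H^n(Z;E)$ has dimension $\tfrac12\dim H^n(Z;E)$, while the difference map in degree $n+1$ is again a difference of isomorphisms (hence surjective with diagonal kernel), producing
\begin{equation*}
\dim H^{n+1}(M;E_\epsilon)=\tfrac12\dim H^n(Z;E)+\dim H^{n+1}(Z;E).
\end{equation*}
Re-expressing the right-hand sides via $\dim H^q(Z;E)=\kappa^{\varrho}_{\Gamma}\dim H^q(\mathfrak{n};V)$ then recovers the table \eqref{sm.29b}.
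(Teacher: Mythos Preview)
Your proof is correct and follows essentially the same route as the paper: the Mayer--Vietoris sequence for $M=\bX_1\cup_Z\bX_2$, combined with Lemma~\ref{dc.27} for $H^*(Z;E)$ and Proposition~\ref{l2.1} for $H^*(\bX;E)$, together with the key fact that $\iota_Y^*$ is injective in every degree. Your degree-by-degree bookkeeping spells out what the paper compresses into a single sentence, but the argument is the same. One small remark: the sentence ``Because $d_\epsilon$ depends continuously on $\epsilon>0$, the dimensions of $H^q(M;E_\epsilon)$ are independent of $\epsilon$'' is not a valid inference on its own (cohomology dimensions can jump in continuous families), but it is also unnecessary here, since your Mayer--Vietoris computation already produces an answer independent of $\epsilon$.
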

\begin{proof}
Consider the Mayer-Vietoris long exact sequence in cohomology
\begin{equation}
\xymatrix{
     \cdots \ar[r]^-{\pa_{q-1}}  & H^q(M;E_{\epsilon})\ar[r]^-{i_q} & H^q(\bX;E_{\epsilon})\oplus H^q(\bX;E_{\epsilon}) \ar[r]^-{j_q} & H^q(Z;E_{\epsilon})\ar[r]^-{\pa_q} &\cdots
 }    
 \label{sm.30}\end{equation}
 coming from a decomposition $M=\cU\bigcup \cV$, where $\cU$ and $\cV$ are open sets of $M$ containing  $\overline{X}_1$ and $\bX_2$ in $M$ such that 
$$
    \cU \cap \cV = \nu_Z((-\delta,\delta)\times Z)
$$
is a tubular neighborhood of $Z\cong \pa\bX$ in $M$.  In particular, the map $j_q$ is defined by $j_q(\omega_1,\omega_2)= \iota_Y^*\omega_1-\iota_Y^*\omega_2$.
Now, by Lemma~\ref{dc.27}, we have that
\begin{equation}
   H^q(Z;E_{\epsilon})\cong H^q(\pa X(Y); E)\cong \bigoplus_{P\in \mathfrak{P}^{\varrho}_{\Gamma}} H^q(T_P;E_P)\cong 
   \bigoplus_{P\in \mathfrak{P}^{\varrho}_{\Gamma}} H^q(\mathfrak{n}_P;V)\cong (H^q(\mathfrak{n};V))^{\kappa^{\varrho}_{\Gamma}},
\label{sm.31}\end{equation}
where the last isomorphism follows from the fact $\mathfrak{n}_P= \Ad(k_P)\mathfrak{n}$.  Hence, the result follows by plugging this into the long exact sequence \eqref{sm.30}, as well as the description of $H^*(X(Y);E_{\epsilon})\cong H^*(X(Y);E)$ given above, using the fact that the map $\iota^*_Y: H^q(X(Y);E)\to H^q(\pa X(Y);E)$ is an inclusion for all $q$.  

\end{proof}

This yields the following useful fact about the small eigenvalues of $D_{\epsilon}$.
\begin{corollary}
If the representation $\varrho$ satisfies condition \eqref{sm.25}, then $D_{\epsilon}$ has no small eigenvalues that are positive and $\Pi_{\sma}$ is the projection on the kernel of $D_{\epsilon}$ for $\epsilon>0$ and the projection on the $L^2$-kernel of $D_b$ for $\epsilon=0$.
\label{sm.32}\end{corollary}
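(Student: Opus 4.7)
The plan is to combine Theorem~\ref{sm.25b} with a dimension count. By Theorem~\ref{sm.25b} one has $\rank \Pi_{\sma} = \dim \ker_{L^2} D_b + 2\dim \ker_{L^2}\eth_E$, and Proposition~\ref{sm.25d} makes the boundary contribution vanish under hypothesis \eqref{sm.25}. The first step is therefore to record
$$
\rank \Pi_{\sma} = \dim \ker_{L^2} D_b.
$$
This already gives the $\epsilon=0$ statement of the corollary, since Theorem~\ref{sm.25b} identifies $\Pi_{\sma}|_{\epsilon=0}$ with the projection onto $\ker_{L^2} D_b$ on $\bhs{sb}$ and the projection onto $\ker_{L^2}\eth_E=\{0\}$ on $\bhs{sm}$.

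For the statement at $\epsilon>0$, I would compute both sides explicitly. Summing the description of Proposition~\ref{sm.26} over $P\in \mathfrak{P}^{\varrho}_{\Gamma}$ and over all degrees, using $\dim\cH^n(\mathfrak{n}_P;V) = \dim\cH^n_+(\mathfrak{n}_P;V) + \dim\cH^n_-(\mathfrak{n}_P;V)$, the Hodge isomorphism \eqref{dc.20}, and the fact that $\mathfrak{n}_P = \Ad(k_P)\mathfrak{n}$ forces $\dim H^q(\mathfrak{n}_P;V) = \dim H^q(\mathfrak{n};V)$, produces
$$
\dim \ker_{L^2} D_b \;=\; \kappa^{\varrho}_{\Gamma}\sum_{q=0}^{2n} \dim H^q(\mathfrak{n};V).
$$

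On the other side, $M$ is closed for $\epsilon>0$ and $\eth_\epsilon$ is elliptic and essentially self-adjoint, so Hodge theory gives $\dim\ker D_\epsilon = \dim H^*(M;E_\epsilon)$. Summing the formula \eqref{sm.29b} of Proposition~\ref{sm.29} over all $q$, the two halves $\tfrac12 \dim H^n(\mathfrak{n};V)$ appearing in degrees $n$ and $n+1$ pair into one full copy of $H^n(\mathfrak{n};V)$, while the degrees $1\le q<n$ and $n+1<q\le 2n$ reproduce $H^{q-1}(\mathfrak{n};V)$ and $H^q(\mathfrak{n};V)$ respectively, exhausting the range $0\le q\le 2n$. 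This gives
$$
\dim H^*(M;E_\epsilon) \;=\; \kappa^{\varrho}_{\Gamma}\sum_{q=0}^{2n} \dim H^q(\mathfrak{n};V) \;=\; \rank \Pi_{\sma}.
$$

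To conclude, I would observe that $\ker D_\epsilon \subseteq \im \Pi_{\sma}$, as the eigenvalue $0$ trivially tends to $0$ as $\epsilon\searrow 0$ and hence is counted among the small eigenvalues. Combined with the equality of dimensions just established, this forces $\ker D_\epsilon = \im\Pi_{\sma}$ for every $\epsilon>0$. Hence $D_\epsilon$ has no positive small eigenvalues and $\Pi_{\sma}$ is precisely the projection onto $\ker D_\epsilon$. The only real obstacle is purely combinatorial, namely lining up the middle-degree half-dimensions in Proposition~\ref{sm.29} with the $\cH^n_\pm$ split from Proposition~\ref{sm.26}; once this bookkeeping is handled, the spectral gap is an immediate consequence.
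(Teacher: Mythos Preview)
Your proof is correct and follows essentially the same route as the paper's: both arguments combine Proposition~\ref{sm.25d}, formula~\eqref{sm.25c}, and the dimension matching between Proposition~\ref{sm.26} and Proposition~\ref{sm.29}, concluding via the obvious inclusion $\ker D_\epsilon\subseteq\im\Pi_{\sma}$. The only difference is that you spell out the common value $\kappa^{\varrho}_{\Gamma}\sum_{q=0}^{2n}\dim H^q(\mathfrak{n};V)$ explicitly, whereas the paper leaves this bookkeeping implicit.
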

\begin{proof}
It follows from Proposition~\ref{sm.26} and Proposition~\ref{sm.29} that $$\dim \ker_{L^2} D_b= \dim H^*(M;E_{\epsilon})= \ker D_{\epsilon} \quad \mbox{for} \quad \epsilon>0.$$  Since $\ker_{L^2}\eth_E=0$ by Proposition~\ref{sm.25d}, we see from \eqref{sm.25c} that $\rank \Pi_{\sma}= \dim \ker D_{\epsilon}$ for $\epsilon>0$.  Since 
$\ker D_{\epsilon}$ is obviously included in the range of $\Pi_{\sma}$, the result follows.  
\end{proof}

The fact that there are no positive small eigenvalues as $\epsilon\searrow 0$ greatly simplifies the description of the limiting behavior of analytic torsion as $\epsilon\searrow 0$.  

\begin{theorem}
For $\epsilon>0$, let us denote by $h_{\epsilon}$ the bundle metric on $E_{\epsilon}$ induced by $h_s$.  Then as $\epsilon\searrow 0$, the logarithm of the analytic torsion of $(M,g_\epsilon, E_{\epsilon}, h_\epsilon )$ has a polyhomogeneous expansion and its finite part is given by 
\begin{equation}
\FP_{\epsilon= 0} \log T(M;E_{\epsilon}, g_{\epsilon},h_{\epsilon})= 2 \log T(X;E,g_X,h_E)+ \log T(D_b^2).
\end{equation}
\label{limit.1}\end{theorem}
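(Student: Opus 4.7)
The proof follows the general strategy of \cite{ARS1}, crucially simplified by the fact, established in Corollary~\ref{sm.32}, that $D_{\epsilon}$ has no positive small eigenvalues as $\epsilon \searrow 0$. I would organize the argument into three main steps: a uniform heat-kernel construction, a splitting of the zeta integral, and the identification of the finite part at $\epsilon = 0$.

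First, since $D_b$ is Fredholm by Lemma~\ref{sm.21}, the uniform heat-kernel construction of \cite[Theorem~7.1]{ARS1} applies and produces, on the appropriate heat surgery double space, a polyhomogeneous heat kernel for $\Delta_{\epsilon}$ with controlled joint expansions in $(t,\epsilon)$ at every boundary face. In particular, for each $q$ the regularized supertrace ${}^{R}\Str_q(e^{-t\Delta_\epsilon})$ admits a polyhomogeneous asymptotic in $(t,\epsilon)$, and hence the regularized zeta function
\begin{equation*}
   {}^{R}\zeta_q(s,\epsilon) := \frac{1}{\Gamma(s)} \int_0^\infty \Bigl( {}^{R}\Tr\bigl(e^{-t\Delta_q(\epsilon)}\bigr) - \dim\ker \Delta_q(\epsilon)\Bigr)\, t^{s-1}\, dt
\end{equation*}
and its derivative at $s=0$ depend polyhomogeneously on $\epsilon$. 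Combining these with the appropriate signs yields a polyhomogeneous expansion of $\log T(M;E_\epsilon,g_\epsilon,h_\epsilon)$ as $\epsilon \searrow 0$, so that the finite part $\FP_{\epsilon=0}$ is well-defined.

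To identify this finite part, I would split the $t$-integral at $t=1$. For $t\ge 1$, Corollary~\ref{sm.32} provides a uniform spectral gap above the kernel, so that $e^{-t\Delta_\epsilon}-\Pi_{\sma}$ decays exponentially in $t$ uniformly in $\epsilon$; the long-time contribution is therefore continuous at $\epsilon=0$ and, in the limit, decomposes along the two boundary faces $\bhs{sm}$ and $\bhs{sb}$ of $X_s$ at $\epsilon=0$. For $t\le 1$, the short-time asymptotics provided by the surgery heat space yield a renormalized expansion whose finite part at $\epsilon=0$ likewise splits along the same two faces. Because $\bhs{sm}$ consists of two disjoint copies of $\bX$ on which, by \eqref{sm.12}, $D_\epsilon$ restricts to the conjugated operator $D_E = \bx^n \eth_E \bx^{-n}$, and because the conjugation by $\bx^n$ exchanges the hyperbolic density with the $b$-density $\bx^{-2n}\dvol_{g_X}$ without affecting the spectrum, each copy contributes precisely $\log T(X;E,g_X,h_E)$. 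The face $\bhs{sb}$ then contributes the analytic torsion $\log T(D_b^2)$ of the $b$-model operator defined with respect to the $b$-density $\frac{dX}{\langle X\rangle}$, yielding the claimed identity.

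The main technical obstacle is the careful bookkeeping at the corner $\bhs{sb}\cap\bhs{sm}$: one must verify that the renormalization used to define ${}^{R}\Str$ on $X$ matches, in the limit $\epsilon\searrow 0$, the one used in the definition of $T(D_b^2)$, so that no extraneous corner contribution appears in the finite part. This matching is where the explicit description of $D_b$ in \eqref{sm.19} as a direct sum of Kostant-type factors and one-dimensional $D(a)$ operators becomes essential, as one can then transplant the delicate computation of regularized determinants on the real line carried out in \cite[\S~2.2]{ARS2} to conclude that the cross-terms cancel exactly, leaving the clean factorization of the theorem.
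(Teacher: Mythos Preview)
Your outline is a reasonable sketch of the general machinery of \cite{ARS1} that underlies this result, but it is both longer than necessary and misses the one genuine technical point the paper has to address.

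The paper's proof is much more economical: it simply invokes \cite[Corollary~11.3]{ARS1}, which already packages together the uniform heat kernel construction, the long-time/short-time splitting, and the identification of the finite part along $\bhs{sm}$ and $\bhs{sb}$. You are essentially re-deriving that corollary from scratch. The spectral gap from Corollary~\ref{sm.32} is indeed what allows the clean invocation of this result (no small-eigenvalue contribution), but once you have it, there is nothing further to do on the heat-kernel side.

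The actual obstacle---and the only thing the paper's proof discusses---is that \cite[Corollary~11.3]{ARS1} requires the bundle metric to be \emph{even} in the sense of \cite[Definition~7.6]{ARS1}, and $h_E$ is not. This evenness hypothesis is used in \cite{ARS1} to invoke \cite[Corollary~7.8]{ARS1}, which is a parity statement ensuring that certain odd terms in the short-time expansion vanish. The paper's fix is to observe that the analog of \cite[Corollary~7.8]{ARS1} still holds here if one replaces the number operator $N_{H/Y}$ in the definition of even/odd expansions by the weight operator $W$ of \eqref{sm.12b}: after normalizing sections as in \eqref{sm.8}, the extra zeroth-order terms appearing in \eqref{sm.14} (in particular $K_P$) are independent of $\rho$ and commute with $W$ by Corollary~\ref{dc.25}, so the relevant parity is preserved. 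You do not mention this issue at all.

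Finally, your last paragraph misplaces the role of \cite[\S~2.2]{ARS2}. Those regularized-determinant computations are used \emph{after} this theorem, in Lemmas~\ref{sm.33} and~\ref{sm.40}, to evaluate $\log T(D_b^2)$ explicitly; they play no role in establishing the factorization itself, and there are no ``cross-terms'' to cancel once the parity argument above is in place.
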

\begin{proof}
This is a particular case of \cite[Corollary~11.3]{ARS1} except for the fact that the bundle metric is not even in the sense of \cite[Definition~7.6]{ARS1}.  This later condition was there to invoke \cite[Corollary~7.8]{ARS1}.  However, in the present setting, the same argument works to prove the analog of \cite[Corollary~7.8]{ARS1} if we substitute the number operator $N_{H/Y}$ occurring in the definition of the even and odd expansions of \cite[(7.33) and (7.34)]{ARS1} by the weight operator $W$ of \eqref{sm.12b}.   Indeed, once we choose normalized sections as in \eqref{sm.8}, the only effect on the model operator \eqref{sm.14} is to add terms of order zero which do not depend

on $\rho$\footnote{Since $\frac{x}{\rho}=\sin\theta$ in the coordinates of \cite{ARS1}.} and these extra terms do not affect the parity of the operator.  In particular, for the part given by $K_P$, this latter point is a consequence Corollary~\ref{dc.25}.       
 
\end{proof}
To compute the contribution of $T(D_b^2)$ coming from $T_P$ for $P\in \mathfrak{P}^{\varrho}_{\Gamma}$, notice from \eqref{sm.20} that 
$$
\log T(D_{b,P}^2)=A_P +B_P,
$$
where $A_P$ denotes the contributions coming from $\ker K_P\oplus \ker K_P$, while $B_P$ denotes the contribution coming from $\ker K_P^{\perp}\oplus \ker K_P^{\perp}$.  
\begin{lemma}
The number $A_P$ does not depend on $P\in \mathfrak{P}^{\varrho}_{\Gamma}$ and is given by 
\begin{equation}
\begin{aligned}
A_P &= \frac12 \sum_{q<n} (-1)^q\left[ \log c_{\lambda_{\varrho,q}} - (2q+1)\log \left( 2\lambda_{\varrho,q} \right)\right] \dim\cH^q(\mathfrak{n};V) \\
         & \quad + \frac12 \sum_{q>n} (-1)^q\left[ \log c_{-\lambda_{\varrho,q}} +(2q+1)\log \left( -2\lambda_{\varrho,q} \right)\right] \dim\cH^q(\mathfrak{n};V) \\  
         & \quad + \frac{(-1)^n\log c_{\lambda^+_{\varrho,q}}}{2}\dim \cH^n(\mathfrak{n};V). \end{aligned}
\label{sm.35}\end{equation}
where 
\begin{equation}
c_b:= \int_{\bbR} \ang X^{-2b} \frac{dX}{\ang X} = \frac{\Gamma(b)\Gamma(\frac12)}{\Gamma(b+\frac12)}, \quad \mbox{for} \; b>0.
\label{sm.33c}\end{equation}
Furthermore, if $n$ is odd, this formula simplifies to
\begin{equation}
A_P =  \frac{(-1)^n\log c_{\lambda^+_{\varrho,q}}}{2}\dim \cH^n(\mathfrak{n};V)+  \sum_{q<n} (-1)^q\left[ \log c_{\lambda_{\varrho,q}} + (2n-2q)\log \left( 2\lambda_{\varrho,q} \right)\right] \dim\cH^q(\mathfrak{n};V)  .
\label{sm.36}\end{equation}
\label{sm.33}\end{lemma}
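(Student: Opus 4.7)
The quantity $A_P$ is, by definition, the contribution to $\log T(D_{b,P}^2)$ coming from the block $\ker K_P \oplus \ker K_P$. By \eqref{sm.20}, the restriction of $D_{b,P}^2$ to this block is the diagonal operator $\diag(-D(W+n)D(-W-n),\,-D(-W-n)D(W+n))$. Since $W$ commutes with $K_P$ by Corollary~\ref{dc.25}, we may simultaneously diagonalize with respect to the eigenspaces of $W+n$. By \eqref{sm.23}--\eqref{sm.24}, $W+n$ acts on $\cH^q(\mathfrak{n}_P;V)$ ($q\neq n$) as the scalar $\lambda_{\varrho,q}$, and on $\cH^n_{\pm}(\mathfrak{n}_P;V)$ as $\pm \lambda^+_{\varrho,n}$. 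The identification $\cH^q(\mathfrak{n}_P;V) \cong \cH^q(\mathfrak{n};V)$ induced by $\Ad(k_P)$ makes both the eigenvalues and dimensions independent of $P$, which already yields the $P$-independence of $A_P$.

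So it suffices to compute, for each eigenvalue $a$ of $W+n$, the analytic-torsion contribution of the $2\times 2$ operator $\diag(-D(a)D(-a),\,-D(-a)D(a))$ acting on $L^2$-sections over $\bbR$ with respect to the $b$-density $\frac{dX}{\ang X}$. Under the isomorphism $\Psi_P$ of \eqref{sm.16}, the top slot of an element of $\cH^q$ corresponds to degree $q$ in $\Lambda^*(\Ed T^*X_s)$, while the bottom slot (carrying the $\frac{dx}{\rho}$ factor) corresponds to degree $q+1$. The two scalar Laplace-type operators share the same positive spectrum; their respective $L^2$-kernels are $\spane\{\ang X^a\}$ and $\spane\{\ang X^{-a}\}$, sitting in $L^2$ exactly when $a<0$ and $a>0$, and the squared $L^2$-norm of $\ang X^{-|a|}$ is precisely the $c_{|a|}$ of \eqref{sm.33c}.

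The key input is now the explicit computation of regularized $\zeta'(0)$ for $-D(\pm a)D(\mp a)$ carried out in \cite[\S~2.2]{ARS2}. Plugging it into the torsion formula $\tfrac{1}{2}\sum_k (-1)^k k\, \zeta_k'(0)$, weighted by the bigrading given by $\Psi_P$, each $q < n$ contributes, with eigenvalue $a = \lambda_{\varrho,q}>0$ and multiplicity $\dim \cH^q(\mathfrak{n};V)$, the term $\tfrac{(-1)^q}{2}\bigl[\log c_{\lambda_{\varrho,q}} - (2q+1)\log(2\lambda_{\varrho,q})\bigr]\dim\cH^q(\mathfrak{n};V)$; the case $q>n$ (with $a = \lambda_{\varrho,q}<0$ and the kernel now lying in degree $q$) produces the mirror-image expression of the second sum in \eqref{sm.35}. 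For $q=n$, the two sub-eigenspaces $\cH^n_{\pm}$ carry opposite eigenvalues $\pm\lambda^+_{\varrho,n}$, so the contributions from the $\pm$ pieces have identical $\log c$ parts but opposite-signed $\log(2\lambda^+_{\varrho,n})$ parts that cancel, leaving exactly the middle term of \eqref{sm.35}.

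For the simplification \eqref{sm.36} under the hypothesis that $n$ is odd, I would invoke the Kostant-type Poincaré duality $\dim\cH^q(\mathfrak{n};V) = \dim\cH^{2n-q}(\mathfrak{n};V)$ together with the symmetry $\lambda_{\varrho,q} = -\lambda_{\varrho,2n-q}$. Re-indexing $q' = 2n - q$ in the $q>n$ sum and using $(-1)^{2n-q'} = (-1)^{q'}$, the two sums combine over $q'<n$; the coefficients $-(2q'+1) + (4n-2q'+1) = 4n - 4q'$ collapse, after the prefactor $\tfrac{1}{2}$, to $2n-2q'$, yielding \eqref{sm.36}. The main obstacle is faithfully extracting and normalizing the 1D regularized-determinant values from \cite[\S~2.2]{ARS2} with the correct sign conventions and correct identification via $\Psi_P$ of which of $-D(\pm a)D(\mp a)$ acts in which form degree; accurately tracking the $c_{|a|}$ factors (which arise from orthonormalizing the one-dimensional harmonic basis $\ang X^{\mp a}$ before applying the torsion formula) and the delicate cancellation at $q=n$ using $\lambda^+_{\varrho,n} = -\lambda^-_{\varrho,n}$ is where the bookkeeping must be done with care.
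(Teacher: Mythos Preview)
Your approach is essentially the same as the paper's: restrict $D_{b,P}^2$ to $\ker K_P\oplus\ker K_P$, diagonalize along the $(W+n)$-eigenspaces using \eqref{sm.23}--\eqref{sm.24}, and feed in the one-dimensional regularized determinant formula $\log\det\Delta(a)=\log c_{|a|}-\sign(a)\log(2|a|)$ from \cite[(2.16)]{ARS2} (with $\Delta(a):=-D(-a)D(a)$), then sum with the degree weights coming from the identification $\Psi_P$. The paper carries out this bookkeeping explicitly, and your description of the $q=n$ cancellation and the reindexing for \eqref{sm.36} matches what happens there.

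One point to sharpen: for the simplification when $n$ is odd, the equality $\dim\cH^q(\mathfrak{n};V)=\dim\cH^{2n-q}(\mathfrak{n};V)$ is not a general ``Kostant-type Poincar\'e duality''; it needs $\varrho$ to be self-dual. The paper invokes the fact (from \cite{Onishchik} or \cite{GW}) that every irreducible representation of $G$ is self-dual when $n$ is odd, and then deduces the dimension symmetry via Poincar\'e duality on $T_P$ applied to $E_P\cong E_P^*$. You should make this step explicit, since it is precisely where the parity hypothesis on $n$ enters.
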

\begin{proof}
When we restrict the action of $D_{b,P}^2$ to the kernel of $\ker K_P$, we obtain the operator
$$
\left(  \begin{array}{cc}  -D(W+n)D(-W-n) & 0 \\
 0 &  -D(-W-n)D(W+n)\end{array}\right).
$$
Setting $\Delta(a):= -D(-a)D(a)$, we know from \cite[equation (2.16)]{ARS2} that 
\begin{equation}
  \log\det \Delta(a)=  \log c_{|a|}- \sign(a) \log(2|a|) \quad \mbox{for}  \; a\ne 0.
\label{sm.33b}\end{equation}
Hence, using the decomposition of $\ker K_P$ into eigenspaces of $W+n$ given in \eqref{sm.23} and \eqref{sm.24},
we see that
\begin{equation}
\begin{aligned}
A_P &= -\frac12 \sum_{q\ne n } \left[ (-1)^q q \log \det \Delta(-\lambda_{\varrho,q}) + (-1)^{q+1}(q+1)\log \det \Delta(\lambda_{\varrho,q})\right]\dim \cH^q(\mathfrak{n}_P;V) \\
        & \quad-\frac12  \left[ (-1)^nn \log \det \Delta(-\lambda^+_{\varrho,n})+ (-1)^{n+1}(n+1)\log\det\Delta(\lambda_{\varrho,n}^+) \right] \dim \cH^n_+(\mathfrak{n}_P;V) \\
        & \quad -\frac12  \left[ (-1)^nn \log \det \Delta(-\lambda^-_{\varrho,n})+ (-1)^{n+1}(n+1)\log\det\Delta(\lambda_{\varrho,n}^-) \right] \dim \cH^n_-(\mathfrak{n}_P;V) \\
        &= -\frac12 \sum_{q<n}\left[ (-1)^qq\log\left( 2\lambda_{\varrho,q} c_{\lambda_{\varrho,q}} \right) + (-1)^{q+1}(q+1)\log\left(\frac{c_{\lambda_{\varrho,q}}}{2\lambda_{\varrho,q}}  \right)  \right] \dim\cH^q(\mathfrak{n}_P;V)\\
        &\quad -\frac12 \sum_{q>n}\left[ (-1)^qq\log\left( \frac{c_{-\lambda_{\varrho,q}}}{-2\lambda_{\varrho,q}}  \right) + (-1)^{q+1}(q+1)\log\left( -2\lambda_{\varrho,q}c_{-\lambda_{\varrho,q}}  \right)  \right] \dim\cH^q(\mathfrak{n}_P;V) \\
        & \quad -\frac12 \left[ (-1)^n n\log \left(2\lambda_{\varrho,n}^+c_{\lambda_{\varrho,n}^+} \right)+ (-1)^{n+1}(n+1)\log\left( \frac{c_{\lambda_{\varrho,n}^+}}{2\lambda_{\varrho,n}^+} \right)   \right]  \frac{\dim\cH^n(\mathfrak{n}_P;V)}2  \\
        & \quad -\frac12 \left[ (-1)^n n\log\left( \frac{c_{\lambda_{\varrho,n}^+}}{2\lambda_{\varrho,n}^+} \right)   + (-1)^{n+1}(n+1)\log \left(2\lambda_{\varrho,n}^+c_{\lambda_{\varrho,n}^+} \right)   \right]  \frac{\dim\cH^n(\mathfrak{n}_P;V)}2  \\
        &= \frac12 \sum_{q<n} (-1)^q\left[ \log c_{\lambda_{\varrho,q}} - (2q+1)\log \left( 2\lambda_{\varrho,q} \right)\right] \dim\cH^q(\mathfrak{n}_P;V) \\
         & \quad + \frac12 \sum_{q>n} (-1)^q\left[ \log c_{-\lambda_{\varrho,q}} +(2q+1)\log \left( -2\lambda_{\varrho,q} \right)\right] \dim\cH^q(\mathfrak{n}_P;V) \\  
         & \quad + \frac{(-1)^n\log c_{\lambda^+_{\varrho,n}}}{2}\dim \cH^n(\mathfrak{n}_P;V).            
         \end{aligned}
\label{sm.34}\end{equation}
Since $\mathfrak{n}_P= \Ad k_P (\mathfrak{n})$, we see that $\dim\cH^q(\mathfrak{n}_P;V)=\dim\cH^q(\mathfrak{n};V)$ and the result follows.  Furthermore, when $n$ is odd, we know from \cite[Theorem~8.3 p.68]{Onishchik} or \cite[\S~3.2.5]{GW} that the representation $\rho$ is self-dual, which implies that there is a canonical isomorphism $E_P^*\cong E_P$ which is an isomorphism of flat vector bundles and of Hermitian vector bundles.  Hence, we see by Poincaré duality that 
\begin{equation}
\dim\cH^{2n-q}(\mathfrak{n}_P;V)=\dim H^{2n-q}(T_P;E_P)= \dim H^{q}(T_P;E_P)\cong \dim \cH^q(\mathfrak{n}_P;V),
\label{sm.37}\end{equation}
from which \eqref{sm.36} follows.
\end{proof}

To compute $B_P$, we need the following result.

\begin{lemma}
For $a\in\bbR$ and $b>0$, we have that
$$
 \log\det\left( \Delta(a)+b^2 \right) -\log\det\left( \Delta(-a)+b^2 \right) = -2\log\left( \frac{a+ \sqrt{a^2+b^2}}{b} \right)
$$
\label{md.7c}\end{lemma}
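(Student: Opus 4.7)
The plan is to realise $\Delta(a)$ as a standard P\"oschl--Teller operator and deduce the identity from an explicit hypergeometric computation of the Jost functions. Under the change of variable $u = \sinh^{-1}(X)$, which is an isometry $L^2(\mathbb{R}, dX/\langle X\rangle) \cong L^2(\mathbb{R}, du)$, one has $D(a) = \partial_u + a\tanh u = \cosh^{-a}(u)\,\partial_u\,\cosh^a(u)$ and a direct calculation gives
\[
\Delta(a) = -\partial_u^2 + a^2 - a(a+1)\operatorname{sech}^2 u.
\]
Since $(-a)(-a+1) = (a-1)a$, the operator $\Delta(-a)$ is the same P\"oschl--Teller operator with the parameter $a$ replaced by $a-1$. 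Setting $\kappa := \sqrt{a^2+b^2}$, the equation $(\Delta(a)+b^2)\psi = 0$ becomes the P\"oschl--Teller zero-mode equation at spectral value $-\kappa^2$.

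After the substitution $z = (1-\tanh u)/2$, this equation becomes hypergeometric, and the decaying Jost solutions $\psi_\pm^a$ normalised so that $\psi_\pm^a(u) \sim e^{\mp\kappa u}$ as $u \to \pm\infty$ are given explicitly by ${}_2F_1$ functions; for instance
\[
\psi_+^a(u) = z^{\kappa/2}(1-z)^{-\kappa/2}\,{}_2F_1(-a,\, a+1;\, 1+\kappa;\, z),
\]
and $\psi_-^a$ is obtained by the symmetry $z \leftrightarrow 1-z$. Applying the standard connection formula for ${}_2F_1$ between the singularities at $z=0$ and $z=1$ and reading off the coefficient of $e^{-\kappa u}$ in the $u \to -\infty$ asymptotics of $\psi_+^a$, one obtains the Jost function
\[
F_a(\kappa) := W(\psi_-^a, \psi_+^a) = -\frac{2\kappa\,\Gamma(\kappa)\,\Gamma(\kappa+1)}{\Gamma(\kappa+a+1)\,\Gamma(\kappa-a)},
\]
and the analogous formula for $\Delta(-a)+b^2$ is obtained by replacing $a$ with $a-1$.

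The final step is to identify the regularised determinant $\det(\Delta(a)+b^2)$, in the conventions of \cite{ARS2}, with $F_a(\kappa)$ up to a multiplicative factor depending only on $\kappa$. This is a standard scattering-theoretic fact that can be verified by checking that $\log\det(\Delta(a)+b^2)$ and $\log F_a(\kappa)$ have the same derivative in $b^2$, both equal to the resolvent trace, computable via the Green's function diagonal $\psi_-^a(u)\psi_+^a(u)/W$, and that they agree at a reference point such as $b \to 0$, where the value is fixed by \eqref{sm.33b}. The $a$-independent normalising factor then cancels in the ratio:
\[
\frac{\det(\Delta(a)+b^2)}{\det(\Delta(-a)+b^2)} = \frac{\Gamma(\kappa-a+1)\,\Gamma(\kappa+a)}{\Gamma(\kappa-a)\,\Gamma(\kappa+a+1)} = \frac{\kappa-a}{\kappa+a} = \left(\frac{b}{a+\kappa}\right)^2,
\]
using $(\kappa-a)(\kappa+a) = b^2$. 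Taking logarithms yields the claimed identity. The main delicate point is the identification with the Jost function in this particular regularisation, but since only the ratio enters the final formula, any $a$-independent ambiguity automatically drops out, reducing the whole proof to the hypergeometric identity above.
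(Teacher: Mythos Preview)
Your approach is correct and genuinely different from the paper's. The paper's proof is a direct five-line zeta-function computation: it invokes from \cite[(2.14)]{ARS2} the heat-trace difference
\[
{}^{R}\Tr\bigl(e^{-t\Delta(a)}\bigr) - {}^{R}\Tr\bigl(e^{-t\Delta(-a)}\bigr) = 2\int_0^a \sqrt{\tfrac{t}{\pi}}\, e^{-tu^2}\,du,
\]
multiplies by $e^{-tb^2}t^{s-1}/\Gamma(s)$, integrates in $t$, swaps the order of integration, and evaluates the resulting elementary integral $\int_0^a (u^2+b^2)^{-1/2}\,du = \log\bigl((a+\sqrt{a^2+b^2})/b\bigr)$ to read off $\zeta'(0)$ directly. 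No special functions beyond $\Gamma$ appear.

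Your route via the P\"oschl--Teller realisation and hypergeometric Jost functions is more structural: it explains \emph{why} the answer is so clean, namely because the supersymmetric shift $a \mapsto a-1$ in the potential $-a(a+1)\operatorname{sech}^2 u$ produces a simple $\Gamma$-ratio in the Wronskian, which collapses to $(\kappa-a)/(\kappa+a)$. The one step you leave as a sketch is the identification of the regularised determinant, in the specific $b$-calculus normalisation of \cite{ARS2}, with the Jost function up to an $a$-independent factor; your resolvent-trace verification strategy is sound and, as you note, any $a$-independent normalisation cancels in the ratio anyway. Still, the paper's heat-kernel argument is substantially shorter and entirely self-contained once \cite[(2.14)]{ARS2} is granted, whereas your approach trades that brevity for a more conceptual picture of the underlying exactly solvable scattering problem.
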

\begin{proof}
By the proof of \cite[(2.14)]{ARS2}, we have that
$$
       {}^{R}\Tr(e^{-t\Delta(a)})- {}^{R}\Tr(e^{-t\Delta(-a)})= 2 \int_0^{a}\sqrt{\frac{t}{\pi}} e^{-tu^2}du.
$$
Hence, for $\Re s>-\frac12$, the difference of the regularized zeta function of $\Delta(a)+b^2$ and $\Delta(-a)+b^2$ is given by
\begin{equation}
\begin{aligned}
  \zeta(s) &= \frac{2}{\Gamma(s)}\int_{0}^\infty t^s e^{-tb^2} \left( \int_0^{a}\sqrt{\frac{t}{\pi}} e^{-tu^2}du\right)\frac{dt}{t} 
  = \frac{2}{\Gamma(s)}\int_0^{a}\left(\int_0^\infty t^s \sqrt{\frac{t}{\pi}} e^{-t(u^2+b^2)}\frac{dt}{t}\right) du \\
  &= \frac{2\Gamma(s+\frac12)}{\sqrt{\pi}\Gamma(s)}\int_0^{a} \frac{du}{(u^2+b^2)^{s+\frac12}} 
  = \frac{2\Gamma(\frac12)s}{\sqrt{\pi}} \int_0^{a} \frac{du}{(u^2+b^2)^{\frac12}} + \mathcal{O}(s^2) \\
  &= 2s\log\left( \frac{a+ \sqrt{a^2+b^2}}{b} \right) + \mathcal{O}(s^2)\end{aligned}
  \end{equation}  
as $s\to 0$.  Hence, we see that 
$$
   \zeta'(0)= 2\log\left( \frac{a+ \sqrt{a^2+b^2}}{b} \right),
$$
from which the result follows.

\end{proof}

We will also need the fact that the positive eigenvalues of $K_P^2$ come in pairs.  Indeed, if $v\in \Lambda^q\mathfrak{n}^*_P\otimes V$ is such that $d_{\mathfrak{n}_P}^* v=0$ and 
$K^2_Pv= b^2v$ with $b>0$, then $v':=d_{\mathfrak{n}_P} v\in \Lambda^{q+1}\mathfrak{n}^*_P\otimes V$ is such that
$$
      K^2_P v'= b^2 v', \quad  d^*_{\mathfrak{n}_P} v'= K_P^2 v=b^2v.
$$
Furthermore, by Proposition~\ref{dc.22}, we have that
$$
      (W+n)v= a v\; \Longrightarrow \;  (W+n)v'= a v'.
$$
Hence, for $q\in \bbN$, $a\in \bbR$ and $b>0$,  let $V_{q,a,b}$ be the subspace of elements $v$ in $\ker(d^*_{\mathfrak{n}_P})\cap \Lambda^q\mathfrak{n}_P^*\otimes V$ such that
$$
          (W+n)v=a v \quad \mbox{and} \quad  K^2_Pv= b^2v,
$$          
so that we have the decomposition
\begin{equation}
 (\ker K_P)^{\perp}= \bigoplus_{q,a,b} \left( V_{q,a,b} \oplus d_{\mathfrak{n}_P}V_{q,a,b} \right)
\label{sm.38}\end{equation}
\begin{remark}
Conjugating with $k_P\in K$, we see that $\dim V_{q,a,b}$ does not depend on $P\in \mathfrak{P}^{\varrho}_{\Gamma}$.
\end{remark}
\begin{lemma}
The term $B_P$ does not depend on $P$ and is given by 
$$
   B_P=\sum_{q,a,b} (-1)^q  \log\left( \frac{a+\sqrt{a^2+b^2}}{b} \right) \dim V_{q,a,b}.
$$
\label{sm.40}\end{lemma}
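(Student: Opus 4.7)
The plan is to restrict $D_{b,P}^2$ to $(\ker K_P)^{\perp}\oplus(\ker K_P)^{\perp}$ and use the refined orthogonal decomposition \eqref{sm.38}. On a summand $V_{q,a,b}$ with $(W+n)=a$ and $K_P^2=b^2$, $b>0$, the identities $\Delta(a)=-D(-a)D(a)$ and $\Delta(-a)=-D(a)D(-a)$ immediately reduce the upper block $K_P^2-D(W+n)D(-W-n)$ of \eqref{sm.20} to $b^2+\Delta(-a)$ and the lower block $K_P^2-D(-W-n)D(W+n)$ to $b^2+\Delta(a)$, both acting on $\CI(\bbR)$ tensored with a finite-dimensional eigenspace of dimension $\dim V_{q,a,b}$. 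The relevant spectral information for these one-dimensional operators has already been packaged into Lemma~\ref{md.7c}.

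Next, I would unfold the analytic torsion of $D_{b,P}^2$ in light of the splitting \eqref{sm.13}: a total degree-$q$ form decomposes as a $T_P$-form of degree $q$ (on which the upper block acts) plus a $T_P$-form of degree $q-1$ wedged with $\frac{dx}{\rho}$ (on which the lower block acts). Writing $\log T(D_{b,P}^2)=-\tfrac{1}{2}\sum_q(-1)^q q\log\det\Delta_q$, splitting into the two blocks, and reindexing the lower contribution by shifting $q\mapsto q+1$, I arrive at
$$
\log T(D_{b,P}^2)\Big|_{(\ker K_P)^{\perp}}=\frac{1}{2}\sum_q(-1)^q\bigl[(q+1)\log\det\Delta^{-}_{q}-q\log\det\Delta^{+}_{q}\bigr],
$$
where $\Delta^{\pm}_{q}$ are the restrictions of the two blocks to $T_P$-degree $q$. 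Since Proposition~\ref{dc.22} says that $d_{\mathfrak{n}_P}$ commutes with the $W$-action (while raising $T_P$-degree by one) and Corollary~\ref{dc.25} says it commutes with $K_P^2$, the space $d_{\mathfrak{n}_P}V_{q,a,b}$ lies in the same $(W+n,K_P^2)$-eigenspace as $V_{q,a,b}$ but at $T_P$-degree $q+1$; injectivity of $d_{\mathfrak{n}_P}$ on $V_{q,a,b}$ (forced by $b>0$) yields $\dim d_{\mathfrak{n}_P}V_{q,a,b}=\dim V_{q,a,b}$.

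The final step is combinatorial: grouping the four contributions coming from the pair $(V_{q,a,b},d_{\mathfrak{n}_P}V_{q,a,b})$, three of the four terms cancel, leaving exactly
$$
-\tfrac{1}{2}(-1)^q\dim V_{q,a,b}\bigl[\log\det(b^2+\Delta(a))-\log\det(b^2+\Delta(-a))\bigr].
$$
By Lemma~\ref{md.7c} this equals $(-1)^q\dim V_{q,a,b}\log\bigl(\tfrac{a+\sqrt{a^2+b^2}}{b}\bigr)$. Summing over $(q,a,b)$ gives the claimed formula for $B_P$; independence of $P\in\mathfrak{P}^\varrho_\Gamma$ follows because $\mathfrak{n}_P=\Ad(k_P)\mathfrak{n}$ and the decomposition \eqref{sm.38} is manifestly equivariant under this conjugation, so $\dim V_{q,a,b}$ does not depend on $P$.

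The main obstacle is really just the degree-shift bookkeeping: one has to line up two sources of alternating signs --- the usual one from the analytic torsion formula and the one arising from the shift $q\mapsto q+1$ induced by $d_{\mathfrak{n}_P}$ --- together with the fact that the upper and lower blocks see opposite arguments $\pm(W+n)$. Once these are properly aligned, everything collapses cleanly thanks to Lemma~\ref{md.7c}.
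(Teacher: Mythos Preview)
Your proof is correct and follows essentially the same route as the paper's: restrict \eqref{sm.20} to $(\ker K_P)^\perp$ via the decomposition \eqref{sm.38}, identify the two blocks on each $V_{q,a,b}$ with $b^2+\Delta(\mp a)$, group the four contributions from the pair $(V_{q,a,b},d_{\mathfrak{n}_P}V_{q,a,b})$, simplify, and invoke Lemma~\ref{md.7c}. The only quibble is the phrase ``three of the four terms cancel'': what actually happens is that the four weighted contributions telescope in their coefficients (as in the paper's one-line combinatorics), leaving the single difference $d(a,b)-d(-a,b)$; none of the four individual terms vanishes outright.
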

\begin{proof}
Set $d(a,b)= \log\det \left( \Delta(a)+ b^2 \right)$.  From \eqref{sm.20} and the decomposition \eqref{sm.38}, we see that
\begin{equation}
\begin{aligned}
B_P&= \sum_{q,a,b} -\frac{\dim V_{q,a,b}}2 \left[ ((-1)^q q + (-1)^{q+1}(q+1))d(-a,b)  \right.\\
 & \left.  \hspace{4cm}+ ((-1)^{q+1}(q+1)+ (-1)^{q+2}(q+2))d(a,b) \right] \\
 &= \sum_{q,a,b} \frac{(-1)^{q+1}\dim V_{q,a,b}}2\left[ d(a,b)-d(-a,b) \right],
\end{aligned}
\label{sm.41}\end{equation}
so that the result follows by applying Lemma~\ref{md.7c}.
\end{proof}

Thus, we deduce from Theorem~\ref{limit.1} that,
\begin{equation}
\FP_{\epsilon= 0} \log T(M;E, g_{\epsilon},h_{\epsilon})= 2 \log T(X;E,g,h)+ \sum_{P\in\mathfrak{P}^{\varrho}_{\Gamma}}\left( A_P+B_P \right),
\label{mc.2}\end{equation}
where $A_P$ and $B_P$ do not in fact depend on $P\in \mathfrak{P}^{\varrho}_{\Gamma}$ and are computed explicitly in Lemma~\ref{sm.33} and Lemma~\ref{sm.40}.

\section{Cusp degeneration of the Reidemeister torsion}\label{be.0}

To study the behavior of the Reidemeister torsion under the above cusp degeneration, we can use the Mayer-Vietoris long exact sequence \eqref{sm.30}.  For $q\ne n$,  let $\mu^q_Z$ be an orthonormal basis of 
$$
   H^q(Z;E_{\epsilon})\cong \bigoplus_{P\in \mathfrak{P}^{\varrho}_{\Gamma}} H^q(T_P;E_P)
$$
with respect to the metrics $g_{T_P}$ and bundle metrics $h_{E_P}$ for $P\in \mathfrak{P}^{\varrho}_{\Gamma}$.  For $q=n$, using the decomposition \eqref{sm.24} and Lemma~\ref{dc.27}, we have a decomposition 
\begin{equation}
     H^n(Z;E_{\epsilon})= H^n_+(Z;E_{\epsilon}) \oplus H^n_-(Z;E_{\epsilon}) 
 \label{be.1}\end{equation}
with 
$$
      H^n(Z;E_{\epsilon})_{\pm}= \bigoplus_{P\in \mathfrak{P}^{\varrho}_{\Gamma}} H^n(T_P;E_P)_{\pm},
$$
where $H^n(T_P;E_P)_{\pm}$ is the image of $\cH^n(\mathfrak{n}_P;V)_{\pm}$ under the map \eqref{dc.16}.  Let $\mu_{\pm}^n$ be an orthonormal basis of $H^n(Z;E_{\epsilon})_{\pm}$ with respect to the metrics $g_{T_P}$ and the bundle metrics $h_{E_P}$ for $P\in \mathfrak{P}^{\varrho}_{\Gamma}$.  
Notice that, combining \eqref{l2.3} and \eqref{l2.3b} with \eqref{l2.4}, we see that the map 
\begin{equation}
  \pr_-\circ \iota_Y^*:  H^n(X(Y);E)\to H_-^n(Z;E_{\epsilon})
\label{be.2}\end{equation}
is an isomorphism, where $pr_-: H^n(Z;E_{\epsilon})\to H_-^n(Z;E_{\epsilon})$ is the projection induced by the decomposition \eqref{be.1}.  We consider then on $H^n(Z;E_{\epsilon})$ the basis 
$$
      \mu^n_Z:= (\iota^*_Y\mu^n_X,\mu^n_+),
$$
where $\mu^n_X$ is the basis of $H^n(\bX;E_{\epsilon})\cong H^n(X(Y);E)$ chosen such that $\pr_-\circ \iota^*_Y( \mu^n_X)= \mu^n_-$.  
\begin{remark}
The basis $\mu^n_{Z}$ is typically not orthonormal with respect to $g_{T_P}$ and $h_{E_P}$, but the change of basis from $\mu^n_Z$ to $(\mu^n_-,\mu^n_+)$ has determinant $1$.
\label{be.3}\end{remark}
One advantage of the basis $\mu_{Z}^q$ is that it is compatible with decomposition
$$
  H^q(Z;E_{\epsilon})= \ker \pa_q \oplus \Im \pa_q
$$
induced by the long exact sequence \eqref{sm.30}.
Now, the map  $j_q$ is explicitly given by
$$
      j_q(u,v)= \iota_Y^*u-\iota_Y^*v,
$$
so 
$$
   \ker \pa_q= \Im j_q = \Im   \iota_Y^*.
$$
Hence, on $H^q(X(Y);E)$, we have an induced basis $\mu_{X}^q$ such that $\iota_Y^*(\mu_{X}^q)=\left. \mu_{Z}^q\right|_{\ker\pa_q}$ (which gives back $\mu^n_X$ above when $q=n$).  This basis in turn induces a basis $\mu_{X}^q\oplus \mu^q_{X}$ on $H^q(\bX;E_{\epsilon})\oplus H^q(\bX;E_{\epsilon})$.  
Similarly, the map $i_q$ is given by
$$
    i_q(u) = (\iota_1^*u,\iota_2^*u),
$$
where $\iota_i: \overline{X}\hookrightarrow M$ for $i=1,2$ is the inclusion of each copy of $\bX$ in $M$.  In particular, we see that $\Im i_q \cong \Im(\iota_Y^*\iota_1^*)$ since $\iota_Y^*$ is injective.  We have in particular a natural decomposition
$$
    H^q(M;E_{\epsilon})= \Im \pa_{q-1} \oplus  \Im (\iota_Y^*\iota_1^*),
$$  
and the basis $\mu_{Z}^*$ induces a basis $\mu_{M}^q$ of $H^q(M;E_{\epsilon})$ compatible with this decomposition given by
$$
         \mu_{M}^q= (\pa_{q-1}(\left. \mu^{q-1}_{Z}\right|_{H^{q-1}(Z;E_{\epsilon})/\ker \pa_{q-1}}), (\iota_Y^*\iota_1^*)^{-1}(\left. \mu_{Z}^q\right|_{\ker\pa_q}). 
$$ 
With these choices of bases, we see that $|\det((\pa_q)_{\perp})|:= |\det(\pa_q: (\ker\pa_q)^{\perp}\to \Im \pa_q)|=1$.  For the map $i_q$ and $j_q$, there are some factors of $\sqrt{2}$ to take into account.  Indeed, if $\mu^q_{X}=\{u_1,\ldots u_{\ell}\}$ so that 
$ \mu_{X}^q\oplus \mu^q_{X}=\{(u_1,0),\ldots,(u_{\ell},0),(0,u_1),\ldots,(0,u_{\ell})\}$, then one can consider instead the orthonormal change of basis
$$
   (\frac{u_1}{\sqrt2},\frac{u_1}{\sqrt2}),\ldots, (\frac{u_{\ell}}{\sqrt2},\frac{u_{\ell}}{\sqrt2}), (\frac{u_1}{\sqrt2},-\frac{u_1}{\sqrt2}),\ldots, (\frac{u_{\ell}}{\sqrt2},-\frac{u_{\ell}}{\sqrt2})
$$  
compatible with the decomposition $H^q(\bX;E_{\epsilon})\oplus H^q(\bX;E_{\epsilon})= \ker j_q \oplus \Im j_q$ since 
$$
    \ker j_q= \spane \{ (\frac{u_1}{\sqrt2},\frac{u_1}{\sqrt2}),\ldots, (\frac{u_{\ell}}{\sqrt2},\frac{u_{\ell}}{\sqrt2})\}. 
$$ 
Since $j_q(\frac{u_j}{\sqrt2},-\frac{u_j}{\sqrt2})= \sqrt2 u_j$ and $i_q((\iota_1^*)^{-1}(u_j))= \sqrt2 (\frac{u_j}{\sqrt2},\frac{u_j}{\sqrt2})$, we thus see that
\begin{equation}
   |\det((j_q)_{\perp})|=|\det((i_q)_{\perp})|= 2^{\frac{\dim H^q(\bX;E_{\epsilon})}2}.
\label{rt.1}\end{equation}

\begin{theorem}
With the above choices of bases in cohomology, we have that the Reidemeister torsions of $M$ and $\bX\cong X(Y)$ are related by
$$
    \tau(M,E_{\epsilon},\mu_{M})= \frac{\tau(X(Y),E,\mu_{X})^2}{\tau(Z,E,\mu_{Z})}.
$$
Furthermore, if $n$ is odd, then $\tau(Z,E,\mu_{Z})=1$ and the formula simplifies to
$$
\tau(M,E_{\epsilon},\mu_{M})= \tau(X(Y),E,\mu_{X})^2.
$$
\label{rt.2}\end{theorem}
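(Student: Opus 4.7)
The plan is to apply Milnor's multiplicativity formula \cite{Milnor1966} to the Mayer-Vietoris short exact sequence of cellular cochain complexes
\begin{equation*}
0\to C^*(M;E_\epsilon)\to C^*(\bX_1;E_\epsilon)\oplus C^*(\bX_2;E_\epsilon)\to C^*(Z;E_\epsilon)\to 0
\end{equation*}
coming from the decomposition $M=\cU\cup\cV$ already used in the proof of Proposition~\ref{sm.29}. Since $\bX_1\cong\bX_2\cong\bX\cong X(Y)$ as CW-complexes carrying the flat bundle $E$, Milnor's formula yields
\begin{equation*}
\tau(X(Y),E,\mu_X)^2=\tau(M,E_\epsilon,\mu_M)\cdot\tau(Z,E,\mu_Z)\cdot\tau(\mathcal{H}),
\end{equation*}
where $\tau(\mathcal{H})$ denotes the torsion of the long exact sequence \eqref{sm.30} computed with the bases $\mu_M$, $\mu_X\oplus\mu_X$ and $\mu_Z$. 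The first identity will therefore follow from the assertion $\tau(\mathcal{H})=1$.

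The key point is that the bases were constructed precisely to make this vanishing transparent. At each degree the choices are compatible with the splittings
\begin{equation*}
H^q(M;E_\epsilon)=\Im\partial_{q-1}\oplus\Im(\iota_Y^*\iota_1^*),\qquad H^q(Z;E_\epsilon)=\ker\partial_q\oplus(\ker\partial_q)^\perp,
\end{equation*}
and with the orthonormal rotation of $H^q(\bX;E_\epsilon)\oplus H^q(\bX;E_\epsilon)$ into $\ker j_q\oplus(\ker j_q)^\perp$ introduced just before \eqref{rt.1}, which is a change of basis of absolute determinant $1$. With respect to such splittings the torsion of the long exact sequence collapses to the alternating product of the transition determinants $|\det(i_q)_\perp|$, $|\det(j_q)_\perp|$ and $|\det(\partial_q)_\perp|$. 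By the very definition of $\mu_M^q$, which transfers $\mu_Z^{q-1}|_{(\ker\partial_{q-1})^\perp}$ through $\partial_{q-1}$, we have $|\det(\partial_q)_\perp|=1$, while \eqref{rt.1} gives $|\det(i_q)_\perp|=|\det(j_q)_\perp|=2^{\dim H^q(\bX;E_\epsilon)/2}$. These two contributions enter the alternating sum with opposite signs at the same degree and cancel term by term, giving $\tau(\mathcal{H})=1$.

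For the second statement, suppose $n$ is odd. Each component $T_P$ of $Z$ is then a flat torus of even dimension $2n$, and as recalled in the proof of Lemma~\ref{sm.33}, the representation $\varrho$ is self-dual, so $E_P\cong E_P^*$ as flat Hermitian vector bundles. Poincar\'e duality on the closed even-dimensional manifold $T_P$ combined with this self-duality pairs $C^*(T_P;E_P)$ with its shifted dual, and the orthonormal bases $\mu_Z^q$ for $q\ne n$ together with $(\mu_-^n,\mu_+^n)$ at the middle degree (which differs from $\mu_Z^n$ by a change of basis of determinant $1$ by Remark~\ref{be.3}) are compatible with this duality, thanks to the $(W+n)$-eigenspace decomposition \eqref{sm.24} and the fact that $\lambda_{\varrho,n}^+=-\lambda_{\varrho,n}^-$. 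The standard Poincar\'e-duality argument for Reidemeister torsion on closed even-dimensional manifolds with self-dual coefficients then forces $\tau(T_P,E_P)=1$, and summing over $P\in\mathfrak{P}^\varrho_\Gamma$ yields $\tau(Z,E,\mu_Z)=1$.

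The main obstacle I anticipate is the sign bookkeeping in Milnor's formula: verifying that the alternating signs and degree shifts are arranged so that the factors $2^{\dim H^q(\bX;E_\epsilon)/2}$ genuinely cancel rather than accumulate into a residual power of $2$. A secondary subtlety is confirming that the $\pm$-decomposition \eqref{sm.24} of $\cH^n(\mathfrak{n}_P;V)$ is itself preserved by Poincar\'e duality, so that $(\mu_-^n,\mu_+^n)$ is self-dual and not merely dual up to a nontrivial permutation of basis vectors.
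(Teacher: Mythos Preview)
Your proposal is correct and follows essentially the same approach as the paper: Milnor's formula applied to the Mayer--Vietoris sequence, with $\tau(\mathcal{H})=1$ following from the cancellation $|\det(i_q)_\perp|^{(-1)^q}|\det(j_q)_\perp|^{(-1)^{q+1}}=1$ at each degree via \eqref{rt.1}, and the $n$ odd case handled by self-duality of $\varrho$ plus Poincar\'e/Milnor duality on the even-dimensional closed manifold $Z$. Your two anticipated obstacles are not genuine: the sign pattern is exactly $(-1)^q$ on $i_q$ and $(-1)^{q+1}$ on $j_q$ so the powers of $2$ cancel termwise, and for the second concern the paper bypasses any need to check compatibility of the $\pm$-decomposition with duality by invoking Remark~\ref{be.3} to pass to an orthonormal basis and then citing the standard result \cite[Proposition~1.12]{Muller1993} that $\tau(Z,E,\mu_Z)^2=1$.
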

\begin{proof}
By the formula of Milnor, we have that
$$
           \frac{\tau(X(Y),E,\mu_{X})^2}{\tau(M,E_{\epsilon},\mu_M)\tau(Z,E,\mu_{Z})}= \tau(\cH),
$$
where $\tau(\cH)$ is the torsion of the complex \eqref{sm.30} with preferred basis given by $\mu_M$, $\mu_{X}\oplus \mu_{X}$ and $\mu_{Z}$.  Using \eqref{rt.1}, one computes that in fact
$$
 \tau(\cH)= \prod_{q} \left( |\det((j_q)_{\perp})|^{(-1)^{q+1}}|\det((i_q)_{\perp})|^{(-1)^{q}} \right)=1,
$$
from which the first formula follows.  For the second formula, 
notice first that by Remark~\ref{be.3}, we can replace the basis $\mu_Z$ with an orthonormal basis without changing the torsion.  Now, when $n$ is odd, we know from \cite[\S~3.2.5]{GW} that the representation $\varrho$ is self-dual.   This means that there is a canonical isomorphism $E^*\cong E$ which is an isomorphism of flat vector bundles and of Hermitian vector bundles.  Thus, by Poincar\'e duality, Milnor duality and \cite[Proposition~1.12]{Muller1993}, we have that
$$
    \tau(Z,E,\mu_{Z})^2=1\; \Longrightarrow \; \tau(Z,E,\mu_{Z})=1.
$$

\end{proof}

\section{Formula relating Analytic and Reidemeister torsions}  \label{cm.0}

By the Cheeger-M\"uller theorem of \cite{Muller1993}, we know that for $\varepsilon>0$, 
\begin{equation}
   \log \tau(M,E_{\epsilon},\mu_M)= \log T(M,E,g_{\varepsilon},h_{\varepsilon})- \log\left(  \prod_q [\mu^q_M| \omega^q]^{(-1)^q}\right),
\label{cm.1}\end{equation}
where $\omega^q$ is an orthonormal basis of harmonic forms with respect to $g_{\varepsilon}$ and $h_{\varepsilon}$ and $[\mu^q_M|\omega^q]= |\det W^q|$ with $W^q$ the matrix describing the change of basis 
$$
       (\mu_M^q)_i= \sum_j W^q_{ij} \omega_j^q.
$$
To obtain a formula relating analytic torsion and Reidemeister torsion on the hyperbolic manifold $(X,g)$, we will take the finite part at $\epsilon=0$ of the right hand side of \eqref{cm.1}.  By formula \eqref{mc.2}, we know how to compute the finite part of $\log T(M,E,g_{\varepsilon},h_{\varepsilon})$ as $\varepsilon\searrow 0$.  To compute the finite part of $\log\left(  \prod_q [\mu^q_M| \omega^q]^{(-1)^q}\right)$, we will proceed as in \cite[\S~3.3]{ARS2}.

First, by the definition of $c_b$ given in \eqref{sm.33c} and using \eqref{sm.23}, we see that an orthonormal basis of $\ker_{L^2}D_b$ is given by
\begin{equation}
\begin{gathered}
\frac{1}{\sqrt{c_{\lambda_{\varrho,q-1}}}}\begin{pmatrix}0 \\ \langle X\rangle^{-\lambda_{\varrho,q-1}}\end{pmatrix}\rho^{n-\lambda_{\varrho,q-1}}\mu^{q-1}_Z, \quad \mbox{in degree} \quad 1\le q<n;  \\
\left(\frac{1}{\sqrt{c_{\lambda_{\varrho,n-1}}}}\begin{pmatrix}0 \\ \langle X\rangle^{-\lambda_{\varrho,n-1}}\end{pmatrix}\rho^{n-\lambda_{\varrho,n-1}}\mu^{n-1}_Z,
\frac{1}{\sqrt{c_{\lambda_{\varrho,n}^+}}}\begin{pmatrix} \langle X\rangle^{-\lambda_{\varrho,n}^+} \\ 0\end{pmatrix}\rho^{n+\lambda_{\varrho,n}^+}\mu^{n}_-\right), \quad \mbox{in degree} \quad q=n; \\
\left( \frac{1}{\sqrt{c_{\lambda_{\varrho,n}^+}}}\begin{pmatrix}0 \\ \langle X\rangle^{-\lambda_{\varrho,n}^+}\end{pmatrix}\rho^{n-\lambda^+_{\varrho,n}}\mu^{n}_+,
\frac{1}{\sqrt{c_{-\lambda_{\varrho,n+1}}}}\begin{pmatrix} \langle X\rangle^{\lambda_{\varrho,n+1}} \\ 0\end{pmatrix}\rho^{n-\lambda_{\varrho,n+1}}\mu^{n+1}_Z \right) \quad \mbox{in degree} \quad q=n+1; \\
\frac{1}{\sqrt{c_{-\lambda_{\varrho,q}}}}\begin{pmatrix} \langle X\rangle^{\lambda_{\varrho,q}} \\ 0\end{pmatrix}\rho^{n-\lambda_{\varrho,q}}\mu^{q}_Z, \quad \mbox{in degree} \quad n+1\le q\le 2n.
\end{gathered}
\label{md.9}\end{equation}

Extending this basis smoothly to $(X_s,E_s)$ and then applying $\Pi_{\ker D_{\epsilon}}=\Pi_{\sma}$ to them gives, for $\epsilon$ small enough, a basis of $\ker D_{\epsilon}$ made of polyhomogeneous sections of $\Lambda^*\Ed T^*X_s\otimes E_s$.  Applying the Gram-Schmidt process, we can assume furthermore that this basis is orthonormal.  Hence, we see that there is an orthonormal  basis $\mu^{q}_{X_s}$ of $\ker D_{\epsilon}$ in degree $q$ of polyhomogeneous sections which restricts on $\bhs{sb}$ to give the basis \eqref{md.9} in degree $q$.   
 
 Now for $1\le q\le n$ and $v^{q-1}\in \mu^{q-1}_Z$, consider the family of  harmonic forms $\omega^q_{\epsilon}$ of degree $q$ with respect to $\eth_{\epsilon}$ such that in terms of the operator $D_{\epsilon}=\rho^n\eth_{\epsilon}\rho^{-n}$, we have that $\rho^n \omega^q_{\epsilon}\in \mu^q_{X_s}$ restricts to 
 \begin{equation}
 \frac{1}{\sqrt{c_{\lambda_{\varrho,q-1}}}}\begin{pmatrix}0 \\ \langle X\rangle^{-\lambda_{\varrho,q-1}}\end{pmatrix}\rho^{n-\lambda_{\varrho,q-1}}v^{q-1}= \frac{\rho^n \epsilon^{-\lambda_{\varrho,q-1}}}{\sqrt{c_{\lambda_{\varrho,q-1}}}}\ang X^{-2\lambda_{\varrho,q-1}}\frac{dX}{\ang X} \wedge v^{q-1} \quad \mbox{at} \quad \epsilon=0.
 \label{dual.1}\end{equation}
If $\epsilon^\mu w$ for $\mu>0$ is a higher order term in the expansion of $\omega^q_{\epsilon}$ as $\epsilon\searrow 0$, then since $D_{\epsilon}$ commutes with $\epsilon$, we see that $\rho^n w\in \ker D_{\epsilon}$, so in particular the restriction $\rho^n w_{sb,P}$ of $\rho^n w$ to $\bhs{sb,P}$ is in $\ker D_{b,P}$.  As opposed to the $L^2$-kernel of $D_{b,P}$, the full kernel of $D_{b,P}$ is more complicated and involves more substantially the operator $K_P$.  For us, what is important is that it decomposes into two parts,
$$
     \ker D_{b,P}= \ker_1 D_{b,P} \oplus \ker_2 D_{b,P},
$$ 
where $\ker_1 D_{b,P}$ and $\ker_2 D_{b,P}$ correspond to the elements of $\ker D_{b,P}$ which are sections on $\bbR$ taking values in $\ker K_P$ and $(\ker K_P)^{\perp}$ respectively.  Thus, the first factor contains the $L^2$-kernel and corresponds to the kernel of the operator in \eqref{sm.26c} for sections on $\bbR$ taking values in $\ker K_P$.  This part can be written explicitly using the fact that the kernel of $D(a)$ is spanned by $\ang{X}^{-a}$.  The second part is more intricate to describe, but the key observation for us will be that, since it comes from $(\ker K_P)^{\perp}$, it leads to no cohomological contributions as we will see.

Thus, if $q<n$, the above discussion implies  that 
\begin{equation}
\rho^n w_{sb,P}= \left( \begin{array}{c}  0 \\ \ang X^{-\lambda_{\varrho,q-1}}  \end{array} \right)\rho^{n-\lambda_{\varrho,q-1}} w_{q-1,P} + \left( \begin{array}{c}  \ang X^{\lambda_{\varrho,q}} \\0  \end{array} \right)\rho^{n-\lambda_{\varrho,q}} w_{q,P}+a_{q,P},
\label{dual.2}\end{equation} 
with  $w_{q,P}\in \cH^q(T_P;E_P)$, where the first two terms are in $\ker_1 D_{b,P}$ and $a_{q,P}\in \ker_2 D_{b,P}$.  
 This means  that 
 \begin{equation}
 \epsilon^{\mu} w_{sb,P}= \epsilon^{\mu-\lambda_{\varrho,q-1}}\ang X^{-2\lambda_{\varrho,q-1}}\frac{dX}{\ang X}\wedge w_{q-1,P}+ 
 \epsilon^{\mu-\lambda_{\varrho,q}}w_{q,P} + \epsilon^{\mu}\rho^{-n}a_{q,P}.
 \label{dual.3}\end{equation}
 Moreover, by Proposition~\ref{sm.29} and its proof, only the first term contributes cohomologically.  
 If instead $q=n$, we must replace \eqref{dual.2} by
 \begin{multline}
\rho^n w_{sb,P}= \left( \begin{array}{c}  0 \\ \ang X^{-\lambda_{\varrho,n-1}}  \end{array} \right)\rho^{n-\lambda_{\varrho,n-1}} w_{q-1,P}  \\ + \left( \begin{array}{c}  \ang X^{\lambda_{\varrho,n}^+} \\0  \end{array} \right)\rho^{n-\lambda_{\varrho,n}^+} w_{n,P}^+ + \left( \begin{array}{c}  \ang X^{-\lambda_{\varrho,n}^+} \\0  \end{array} \right)\rho^{n+\lambda_{\varrho,n}^+} w_{n,P}^- + a_{n,P}
\label{dual.4}\end{multline}  
with $w^{\pm}_{n,P}\in \cH^n(T_P;E_P)_{\pm}$ and $a_{n,P}\in \ker_2 D_{b,P}$.  Hence, in this case,
\begin{equation}
 \epsilon^{\mu} w_{sb,P}= \epsilon^{\mu-\lambda_{\varrho,n-1}}\ang X^{-2\lambda_{\varrho,n-1}}\frac{dX}{\ang X}\wedge w_{q-1,P}+ 
 \epsilon^{\mu-\lambda_{\varrho,n}^+}w_{n,P}^+  +  \epsilon^{\mu+\lambda^+_{\varrho,n}}w^-_{n,P}+ \epsilon^{\mu}\rho^{-n}a_{n,P}.
 \label{dual.5}\end{equation}
 Clearly the last term does not contribute cohomologically.  Hence, since $\mu>0$ and  $\lambda_{\varrho,n-1}>\lambda_{\varrho,n}^+ >0$ by Remark~\ref{sm.24c}, we see that \eqref{dual.3} and \eqref{dual.5} are cohomologically negligible compared to \eqref{dual.1} multiplied by $\rho^{-n}$ as $\epsilon \searrow 0$.  
Hence, asymptotically, as $\varepsilon\searrow 0$, we have that
$$
\omega^q_{\epsilon}\sim \frac{\langle X\rangle^{-\lambda_{\varrho,q-1}} }{\sqrt{c_{\lambda_{\varrho,q-1}}}}  \frac{dx}{\rho} \wedge \rho^{-\lambda_{\varrho,q-1}}v^{q-1} = \frac{\epsilon^{-\lambda_{\varrho,q-1}}}{\sqrt{c_{\lambda_{\varrho,q-1}}}} \ang X^{-2\lambda_{\varrho,q-1}}  \frac{dX}{\ang X}\wedge v^{q-1}.
$$ 
Since
$$
     \int_{-\infty}^{\infty}  \frac{\epsilon^{-\lambda_{\varrho,q-1}}}{\sqrt{c_{\lambda_{\varrho,q-1}}}} \ang X^{-2\lambda_{\varrho,q-1}}  \frac{dX}{\ang X}= \epsilon^{-\lambda_{\varrho,q-1}}\sqrt{c_{\lambda_{\varrho,q-1}}},
 $$   
this means that, in terms of cohomology classes,
\begin{equation}
[\omega_{\epsilon}^q]\sim \varepsilon^{-\lambda_{\varrho,q-1}}\sqrt{c_{\lambda_{\varrho,q-1}}} \; \pa_{q-1}\left[ v^{q-1} \right]  \quad \mbox{as} \; \varepsilon\searrow 0.
\label{cm.2}\end{equation}
Similarly, for $q=n+1$, if $v^{n}_+\in \mu_+^n$, let $\omega^{n+1}_\epsilon$ be the family of $L^2$-harmonic forms with respect to $\eth_{\epsilon}$ such that $\rho^n\omega^{n+1}_\epsilon\in \mu^{n+1}_{X_s}$ restricts to 
\begin{equation}
 \frac{1}{\sqrt{c_{\lambda_{\varrho,n}^+}}}\begin{pmatrix}0 \\ \langle X\rangle^{-\lambda_{\varrho,n}^+}\end{pmatrix}\rho^{n-\lambda_{\varrho,n}^+}v^{n}_+= \frac{\rho^n \epsilon^{-\lambda^+_{\varrho,n}}}{\sqrt{c_{\lambda_{\varrho,n}^+}}} \ang X^{-2\lambda^{+}_{\varrho,n}} \frac{dX}{\ang X}\wedge v^n_+ \quad \mbox{at} \quad \epsilon=0.
 \label{dual.5b}\end{equation}
 Again, if $\epsilon^{\mu}w$ is a higher order term in the expansion of $\omega^{n+1}_{\epsilon}$ as $\epsilon\searrow 0$, then again the restriction $\rho^n w_{sb,P}$ of $\rho^n w$ to $\bhs{sb,P}$ is in $\ker D_b$, so that 
 \begin{multline}
\rho^n w_{sb,P}= \left( \begin{array}{c}  0 \\ \ang X^{-\lambda_{\varrho,n}^+}  \end{array} \right)\rho^{n-\lambda_{\varrho,n}^+} w_{n,P}^+  \\ + \left( \begin{array}{c} 0\\ \ang X^{\lambda_{\varrho,n}^+}   \end{array} \right)\rho^{n+\lambda_{\varrho,n}^+} w_{n,P}^- + \left( \begin{array}{c}  \ang X^{\lambda_{\varrho,n+1}} \\0  \end{array} \right)\rho^{n-\lambda_{\varrho,n+1}} w_{n+1,P} + a_{n+1,P}
\label{dual.6}\end{multline} 
with $w_{n,P}^{\pm}\in \cH_{\pm}^n(\mathfrak{n}_P;V)$,  $w_{n+1,P}\in \cH^{n+1}(\mathfrak{n}_P;V)$ and $a_{n+1,P}\in \ker_2 D_{b,P}$.  Hence, we have that 
\begin{multline}
 \epsilon^{\mu} w_{sb,P}= \epsilon^{\mu-\lambda_{\varrho,n}^+}\ang X^{-2\lambda_{\varrho,n}^+}\frac{dX}{\ang X}\wedge w_{n,P}^+  + 
 \epsilon^{\mu+\lambda_{\varrho,n}^+}\ang X^{2\lambda^+_{\varrho,n}}\frac{dX}{\ang X}\wedge w_{n,P}^-  \\
 + \epsilon^{\mu-\lambda_{\varrho,n+1}}w_{n+1,P}+ \epsilon^{\mu}\rho^{-n}a_{n+1,P}.
 \label{dual.7}\end{multline}
 Clearly, the last term does not contribute cohomologically.  Moreover, if $w^-_{n,P}$ is non-zero, we must have that $\mu>\lambda^+_{\varrho,n}$, otherwise $\rho^n\omega^{n+1}_{\epsilon}$ would not vanish at $\bhs{sm}$ as a section of $\Lambda^*({}^{\ed}T^*X_s)\otimes E_s$.  Since $\mu>0$ and $\lambda_{\varrho,n+1}<0$, we thus see that the other terms are cohomologically negligible compared to \eqref{dual.5b} as $\epsilon\searrow 0$.  Hence, proceeding as before, we see that in terms of cohomology classes,
\begin{equation}
[\omega_{\epsilon,+}^{n+1}]\sim \varepsilon^{-\lambda_{\varrho,n}^+}\sqrt{c_{\lambda_{\varrho,n}^+}} \; \pa_{n}\left[ v^n_+ \right]  \quad \mbox{as} \; \varepsilon\searrow 0.
\label{cm.3}\end{equation}

Now, since the representation $\varrho$ is self-dual when $n$ is odd and $\varrho^*\cong \varrho\circ \vartheta$ if $n$ is even, we see that the above results holds for the dual representation $\rho^*$ and the corresponding dual  vector bundle $E^*$.  Now, Poincaré duality gives a canonical isomorphism
$$
     \cH^n(T_P;E_P)_{\pm}^*\cong \cH^n(T_P;E_P^*)_{\mp}.
$$
Hence, applying Poincaré duality on $M$ and $Z$, we can dualize \eqref{cm.2}.  More precisely,
for $n+1\le q\le 2n$ and $v\in \mu^{q}_Z$, let $\omega^q_{\epsilon}$ be the family of $L^2$-harmonic forms of degree $q$ with respect to $\eth_{\epsilon}$ such that $\rho^n \omega^q_{\epsilon}\in \mu^q_{X_s}$ restricts to 
$$
 \frac{1}{\sqrt{c_{-\lambda_{\varrho,q}}}}\begin{pmatrix}\langle X\rangle^{\lambda_{\varrho,q}} \\ 0\end{pmatrix}\rho^{n-\lambda_{\varrho,q}}v \quad \mbox{at} \quad \epsilon=0.
$$
Then, the dual statement of \eqref{cm.2} is that asymptotically, in terms of cohomology classes,
\begin{equation}
[\omega_{\epsilon}^q]\sim   \frac{\epsilon^{-\lambda_{\varrho,q}}}{\sqrt{c_{-\lambda_{\varrho,q}}}} (\iota_Y^*\iota_1^*)^{-1}\left[v\right] \quad \mbox{as} \; \varepsilon\searrow 0.
\label{cm.4}\end{equation}

Finally, for $q=n$ and $v^n_-\in \mu^n_-$, let $\omega_{\epsilon,-}^n$ be a family of $L^2$-harmonic forms with respect to $\eth_{\varepsilon}$ such that $\rho^n\omega^n_{\epsilon,-}\in \mu^n_{X_s}$ restricts to 
\begin{equation}
\frac{1}{\sqrt{c_{\lambda_{\varrho,n}^+}}}\begin{pmatrix}\langle X\rangle^{-\lambda_{\varrho,n}^+} \\ 0 \end{pmatrix}\rho^{n+\lambda^+_{\varrho,n}}v^n_-\quad \mbox{at}\quad \epsilon=0.
\label{cm.4b}\end{equation}

In this case,  it is delicate to take the Poincaré dual of \eqref{cm.3}, since the Poincaré duality on $M$ behaves differently from the Poincaré duality on $T_P$.  On the other hand, computing the asymptotic behavior of the cohomology class $[\omega_{\epsilon,-}^n]$ is slightly more complicated.  This is due to the fact that in the asymptotic expansion of $\omega_{\epsilon,-}^n$ as $\epsilon\searrow 0$, there is a lower order term which in terms of $L^2$ norm becomes negligible, but in terms of cohomology, is of the same order as the top order term in the expansion. 

To compute this term in the expansion of $\omega^n_{\epsilon,-}$, recall first that on the single surgery space $X_s$, $\rho$ is a boundary defining function for $\bhs{sb}$ and $x_{sm}:= \frac{\varepsilon}{\rho}=\frac{1}{\ang X}$ is a boundary defining function for $\bhs{sm}$. Hence, we see from \eqref{cm.4b} that the restriction of $\rho^n\omega^n_{\epsilon,-}$ at $\bhs{sb}$ has a term of order $x^{\lambda^+_{\varrho,n}}_{sm}$ at $\bhs{sm}$, 
$$
       \rho^n\omega^n_{\epsilon,-}\sim x^{\lambda^+_{\varrho,n}}_{sm} f = \varepsilon^{\lambda^+_{\rho,n}}\widetilde{f}\quad \mbox{as} \quad x_{sm}\searrow 0,
$$   
where $\widetilde{f}= \frac{f}{\rho^{\lambda^{+}_{\varrho,n}}}$ and 
$f$  is a bounded polyhomogeneous section of $\Lambda^n(\Ed T^*X_s)\otimes E_s$ on $\bhs{sm}$.  In particular, since $\left. f\right|_{\bhs{sm}\cap \bhs{sb}}\ne 0$, we see that $\widetilde{f}$  is not in $L^2$.  Moreover, since $D_{\varepsilon}(\rho^n\omega^n_{\epsilon,-})=0$ and $D_{\varepsilon}$ commutes with $\varepsilon$, we see that $D_{0}\widetilde{f}=0$, where $D_0$ is the restriction of $D_{\varepsilon}$ to $\bhs{sm}$.  

On the other hand, if $\rho^n\omega^n_{\epsilon,-}$ has a term of the form $\rho^\mu(\log\rho)^{\ell} w$ in its expansion at $\bhs{sb,P}$ with $\mu>0$, $\ell\in \bbN_0$ and $w\in \cA_{\phg}(X_s;\Lambda^n(\Ed T^*X_s)\otimes E_s)$ a bounded polyhomogeneous section, then since $D_{\epsilon}$ commutes with $\epsilon$, we see that
$$
    \left.\ang X^{\mu}(\log \ang X)^{j} w\right|_{\bhs{sb}}\in \ker D_b, \quad j\in\{0,\ldots,\ell\},
$$ 
though it is not necessarily in $\ker_{L^2}D_b$.  In particular, $\left.\ang X^{\mu} w\right|_{\bhs{sb}}\in \ker D_b$.  From the indicial family \eqref{sm.22}, we see that  the elements of the kernel of $\ker D_b$ only involve powers of $\ang X$ and logarithmic term $\log \ang X$ in their expansion at $X=\pm\infty$, which implies that $\ell=0$.

Then, from \eqref{sm.17} and \eqref{sm.26b}, this means that
\begin{multline}
\left.\ang X^{\mu} w\right|_{\bhs{sb,P}}= \frac{1}{\sqrt{c_{\lambda_{\varrho,n-1}}}}\left(\begin{array}{c} 0 \\ \ang X^{-\lambda_{\varrho,n-1}}  \end{array} \right) \rho^{n-\lambda_{\varrho,n-1}} w_{P}' + a_{n,P}' \\
+\frac{1}{\sqrt{c_{\lambda_{\varrho,n}^+}}} \left(\begin{array}{c} \ang X^{\lambda_{\varrho,n}^+} \\ 0  \end{array} \right) \rho^{n-\lambda_{\varrho,n}^+} w^+_{P}  +  \frac{1}{\sqrt{c_{\lambda_{\varrho,n}^+}}}\left(\begin{array}{c} \ang X^{-\lambda_{\varrho,n}^+} \\ 0  \end{array} \right) \rho^{n+\lambda_{\varrho,n}^+} w^-_{P} 
\end{multline}
with $w_{P}'\in \cH^{n-1}(T_P;E_P),$ $a_{n,P}'\in \ker_2 D_{b,P}$ and $w^{\pm}_{P}\in \cH^n(T_P;E_P)_{\pm}$.  Clearly, the second term does not contribute cohmologically and can be forgotten.  Now, the term coming from $w_{P}'$ in the expansion of $\omega^n_{\epsilon,-}$ is asymptotically of the form
\begin{equation}
  \epsilon^{\mu}  \frac{1}{\sqrt{c_{\lambda_{\varrho,n-1}}}}\left(\begin{array}{c} 0 \\ \ang X^{-\lambda_{\varrho,n-1}}  \end{array} \right) \rho^{-\lambda_{\varrho,n-1}} w_{P,0}'=
  \frac{1}{\sqrt{c_{\lambda_{\varrho,n-1}}}}\epsilon^{\mu-\lambda_{\varrho,n-1}}\ang X^{-2\lambda_{\varrho,n-1}}\frac{dX}{\ang X} \wedge w_{P}',
\label{do.3}\end{equation}
so that at the cohomological level,
\begin{equation}
\left[\frac{1}{\sqrt{c_{\lambda_{\varrho,n-1}}}}\epsilon^{\mu-\lambda_{\varrho,n-1}}\ang X^{-2\lambda_{\varrho,n-1}}\frac{dX}{\ang X} \wedge w_{P}'\right]\sim \epsilon^{\mu-\lambda_{\varrho,n-1}}\sqrt{c_{\lambda_{\varrho,n-1}}} \pa_{n-1}\left[ w_{P}' \right] \quad \mbox{as} \quad \epsilon\searrow 0
\label{do.4}\end{equation}
using the same argument leading to \eqref{cm.2}.  
\begin{remark}
Notice that \eqref{do.4}  is not negligible in the expansion of $\omega^n_{\epsilon}$, but it is when we compared to \eqref{cm.2} since we assume $\mu>0$.  
\label{do.4b}\end{remark}

On the other hand, for the term coming from $w^-_{P}$,  it is of the form $\epsilon^{\mu}$ times a term comparable to $\rho^n\omega^n_{\epsilon,-}$, so will be negligible cohomologically in the limit $\epsilon\searrow 0$.  

Finally, if $w^+_{P}\ne 0$, we see that the term coming from $w^+_{P}$ in the expansion of $\omega^n_{\epsilon,-}$ is
$$  
 \frac{\epsilon^{\mu}}{\sqrt{c_{\lambda_{\varrho,n}^+}}} \ang X^{\lambda_{\varrho,n}^+}  \rho^{-\lambda_{\varrho,n}^+} w^+_{P,0} = \frac{\epsilon^{\mu-\lambda^+_{\varrho,n}}}{\sqrt{c_{\lambda_{\varrho,n}^+}}}w^+_{P,0} \quad \mbox{as} \quad \epsilon\searrow 0.
$$
 When we pull-back to $Z$, this gives at the cohomological level
\begin{equation}
  [\iota_Z^* \omega^n_{\epsilon,-}] \sim \frac{\epsilon^{\lambda^+_{\varrho,n}}}{\sqrt{c_{\lambda_{\varrho,n}^+}}}[v^n_-] +  \sum_{P\in \mathfrak{P}_{\Gamma}}\frac{\epsilon^{\mu-\lambda^+_{\varrho,n}}}{\sqrt{c_{\lambda_{\varrho,n}^+}}}[w^+_{P,0}]  \quad \mbox{as} \quad \epsilon\searrow 0.
\label{do.5}\end{equation}
However, since \eqref{be.2} is an isomorphism in cohomology, we must have that $\mu=2\lambda^+_{\varrho,n}$ and that
\begin{equation}
[\iota_Z^*\omega^n_{\epsilon,-}]  \sim \frac{\epsilon^{\lambda^+_{\varrho,n}}}{\sqrt{c_{\lambda_{\varrho,n}^+}}} \iota^*_Z w_M^n \quad \mbox{as} \quad \epsilon\searrow 0,
\label{do.6}\end{equation} 
where $w_M^n\in H^n(X(Y);E)$ is the unique cohomology class such that $\pr_-\circ \iota_Y^*(w^n_{M})= [v^n_-]$.

Combining \eqref{cm.2}, \eqref{cm.3}, \eqref{cm.4}, \eqref{do.4}, \eqref{do.6} and keeping in mind Remark~\ref{do.4b}, we finally obtain
\begin{equation}
\begin{aligned}
\FP_{\varepsilon=0} \log\left(  \prod_q [\mu^q_M| \omega^q]^{(-1)^q}\right)= & \frac{\kappa^{\varrho}_{\Gamma}}2 \sum_{q<n} (-1)^q\left[ \log c_{\lambda_{\varrho,q}} \right] \dim\cH^q(\mathfrak{n};V) \\
         & + \frac{\kappa^{\varrho}_{\Gamma}}2 \sum_{q>n} (-1)^q\left[ \log c_{-\lambda_{\varrho,q}} \right] \dim\cH^q(\mathfrak{n};V) \\  
         & + \frac{\kappa^{\varrho}_{\Gamma}(-1)^n\log c_{\lambda^+_{\varrho,q}}}{2}\dim \cH^n(\mathfrak{n};V). \end{aligned}
\label{cm.10}\end{equation}

This yields the following formula relating analytic torsion and Reidemeister torsion.

\begin{theorem}
If the irreducible representation $\varrho$ is such that Assumption~\ref{dc.3b} and \eqref{sm.25} hold,  then the analytic torsion of $(X,g_{X}, E,h_E)$ and the Reidemeister torsion of $(X(Y),E,\mu_X)$ are related by
\begin{equation}
\log T(X;E,g_{X},h_E)=  \log\tau(X(Y),E,\mu_X)-\frac{1}2 \log \tau(Z,E,\mu_Z) - \frac{\kappa^{\varrho}_{\Gamma}}2 \left( \alpha_{\varrho}  + \beta_{\varrho} \right)
\label{cm.11d}\end{equation}
where $\kappa^{\varrho}_{\Gamma}=\#\mathfrak{P}^{\varrho}_{\Gamma}$ is the number of connected components $T_P$ of $\pa\bX$ for which $H^*(T_P;E_P)$ is non-trivial,  
\begin{equation}
\alpha_{\varrho} := \frac12 \sum_{q\ne n} (-1)^{q}(2q+1)\sign(q-n) \log \left( 2|\lambda_{\varrho,q}| \right) \dim\cH^q(\mathfrak{n};V) 
\label{cm.11e}\end{equation}
and 
\begin{equation}
 \beta_{\varrho}:=\sum_{q,a}\sum_{b>0} (-1)^q \dim V_{q,a,b} \log\left( \frac{a+\sqrt{a^2+b^2}}{b} \right)\label{cm.11f}\end{equation}
 with $V_{q,a,b}$ the vector spaces occurring  in the decomposition \eqref{sm.38} for the parabolic subgroup $P=P_0$.  

\label{cm.11c}\end{theorem}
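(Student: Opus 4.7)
The plan is to assemble Theorem~\ref{cm.11c} as the conjunction of four ingredients already in place: the Cheeger--M\"uller theorem on the closed double $M$, the asymptotic formula \eqref{mc.2} for analytic torsion, the surgery formula for Reidemeister torsion from Theorem~\ref{rt.2}, and the asymptotic formula \eqref{cm.10} for the change-of-basis determinant. Since $(M,g_\epsilon)$ is a closed oriented Riemannian manifold with the unimodular representation induced by $\varrho$, the Cheeger--M\"uller theorem in the form of \cite{Muller1993} gives, for each $\epsilon>0$, the identity \eqref{cm.1},
\begin{equation*}
\log\tau(M,E_{\epsilon},\mu_M)=\log T(M,E_\epsilon,g_{\epsilon},h_{\epsilon})-\log\!\left(\prod_q [\mu^q_M|\omega^q]^{(-1)^q}\right).
\end{equation*}
The left-hand side is independent of $\epsilon$ (the bases $\mu_M$ having been chosen in a way that only depends on $\mu_Z$ through the Mayer--Vietoris sequence), so taking the finite part as $\epsilon\searrow 0$ is an equality rather than merely an asymptotic statement.

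Next I would substitute \eqref{mc.2} on the right-hand side to get
\begin{equation*}
\log\tau(M,E_\epsilon,\mu_M)=2\log T(X;E,g_X,h_E)+\sum_{P\in\mathfrak{P}^{\varrho}_{\Gamma}}(A_P+B_P)-\FP_{\epsilon=0}\log\!\left(\prod_q [\mu^q_M|\omega^q]^{(-1)^q}\right),
\end{equation*}
and then use Theorem~\ref{rt.2} to replace $\log\tau(M,E_\epsilon,\mu_M)$ by $2\log\tau(X(Y),E,\mu_X)-\log\tau(Z,E,\mu_Z)$. Solving for $\log T(X;E,g_X,h_E)$ yields
\begin{equation*}
\log T(X;E,g_X,h_E)=\log\tau(X(Y),E,\mu_X)-\tfrac12\log\tau(Z,E,\mu_Z)-\tfrac12\sum_{P}(A_P+B_P)+\tfrac12\,\FP_{\epsilon=0}\log\!\left(\prod_q [\mu^q_M|\omega^q]^{(-1)^q}\right).
\end{equation*}
Since $A_P$ and $B_P$ are independent of $P\in\mathfrak{P}^{\varrho}_{\Gamma}$ by Lemmas~\ref{sm.33} and \ref{sm.40}, the third summand equals $\tfrac{\kappa^{\varrho}_{\Gamma}}{2}(A_{P_0}+B_{P_0})$.

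It remains to identify the combination $\tfrac{\kappa^{\varrho}_{\Gamma}}{2}(A_{P_0}+B_{P_0})-\tfrac12\,\FP_{\epsilon=0}\log\!\left(\prod_q[\mu^q_M|\omega^q]^{(-1)^q}\right)$ with $\tfrac{\kappa^{\varrho}_{\Gamma}}{2}(\alpha_\varrho+\beta_\varrho)$. Here I would plug in the explicit expression \eqref{sm.35} for $A_{P_0}$, the expression from Lemma~\ref{sm.40} for $B_{P_0}$, and the expression \eqref{cm.10} for the finite part of the change-of-basis determinant. The $\log c_{\lambda_{\varrho,q}}$, $\log c_{-\lambda_{\varrho,q}}$ and $\log c_{\lambda_{\varrho,n}^+}$ contributions appearing in $\kappa^{\varrho}_{\Gamma}A_{P_0}$ match exactly (up to a factor of two) those in \eqref{cm.10} and hence cancel. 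What survives from the $A$-contribution is precisely the collection of $\log(2|\lambda_{\varrho,q}|)$-terms, which, after grouping according to the sign of $q-n$, assemble into $\kappa^{\varrho}_{\Gamma}\alpha_\varrho$ as defined in \eqref{cm.11e}; the $B$-contribution is unaffected by this cancellation and becomes $\kappa^{\varrho}_{\Gamma}\beta_\varrho$ directly from Lemma~\ref{sm.40}, using that the spaces $V_{q,a,b}$ for different $P\in\mathfrak{P}^{\varrho}_{\Gamma}$ are identified by conjugation with $k_P$.

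The main obstacle is therefore not the derivation itself, which is essentially bookkeeping, but the careful matching of signs and weights in the previous section, and in particular the justification that the logarithmic change-of-basis contribution \eqref{cm.10} arises solely from the top-order and the one subtle subleading term in the expansion of the harmonic representatives $\omega^n_{\epsilon,-}$ analyzed via \eqref{do.4}--\eqref{do.6}; these have already been established. Thus the proof is the above four-line chain of substitutions followed by the arithmetic verification that $\kappa^{\varrho}_{\Gamma}(A_{P_0}+B_{P_0})-\FP_{\epsilon=0}\log\prod_q[\mu^q_M|\omega^q]^{(-1)^q}=\kappa^{\varrho}_{\Gamma}(\alpha_\varrho+\beta_\varrho)$.
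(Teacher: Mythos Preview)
Your proposal is correct and follows exactly the same approach as the paper's proof, which simply reads: ``The formula follows by taking the finite part as $\epsilon\searrow 0$ of the right hand side of \eqref{cm.1} via \eqref{mc.2} and \eqref{cm.10} and by applying Theorem~\ref{rt.2} to the left hand side of \eqref{cm.1}.'' You have merely made explicit the algebraic bookkeeping that the paper leaves to the reader, namely the cancellation of the $\log c_{\lambda_{\varrho,q}}$ terms between $\kappa^{\varrho}_{\Gamma}A_{P_0}$ (from \eqref{sm.35}) and the finite-part expression \eqref{cm.10}, leaving precisely $\kappa^{\varrho}_{\Gamma}\alpha_\varrho$, while $\kappa^{\varrho}_{\Gamma}B_{P_0}=\kappa^{\varrho}_{\Gamma}\beta_\varrho$ is immediate from Lemma~\ref{sm.40}.
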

\begin{proof}
The formula follows by taking the finite part as $\epsilon\searrow 0$ of the right hand side of \eqref{cm.1} via \eqref{mc.2} and \eqref{cm.10} and by applying Theorem~\ref{rt.2} to the left hand side of \eqref{cm.1}.
\end{proof}
\begin{corollary}
If $n$ is odd, then the formula of Theorem~\ref{cm.11c} simplifies to 
\begin{equation}
\log T(X;E,g_{X},h)=  \log\tau(X(Y),E,\mu_X) - \frac{\kappa^{\varrho}_{\Gamma}}2 \left( \alpha_{\varrho}  + \beta_{\varrho} \right)
\label{ff.1b}\end{equation}
with $\alpha_{\varrho}$ given more simply by
\begin{equation}
\alpha_{\varrho} :=  2\sum_{q< n} (-1)^{q} (n-q) \log \left( 2\lambda_{\varrho,q} \right) \dim\cH^q(\mathfrak{n};V) 
\label{ff.1c}\end{equation}
\label{ff.1}\end{corollary}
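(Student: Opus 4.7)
The plan is to deduce Corollary~\ref{ff.1} from Theorem~\ref{cm.11c} by invoking two simplifications specific to odd $n$. First, the term $-\tfrac{1}{2}\log\tau(Z,E,\mu_Z)$ drops out: by the second half of Theorem~\ref{rt.2}, $\tau(Z,E,\mu_Z)=1$ whenever $n$ is odd, as that part of the theorem was already established via self-duality of $\varrho$ and Poincar\'e/Milnor duality on the $2n$-dimensional boundary tori $T_P$. Second, the formula \eqref{cm.11e} for $\alpha_{\varrho}$ collapses to a sum over $q<n$ only. Thus the main content of the corollary is the combinatorial simplification of $\alpha_{\varrho}$, and this is where essentially all the work lies.

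To carry out that simplification, the key ingredient is the self-duality of $\varrho$ for $n$ odd (as cited in the proof of Lemma~\ref{sm.33} from \cite[\S~3.2.5]{GW} or \cite[Theorem~8.3 p.68]{Onishchik}). Self-duality induces, via an isomorphism of flat Hermitian vector bundles $E_P^*\cong E_P$ on $T_P$, a Poincar\'e duality identification
\[
\cH^{2n-q}(\mathfrak{n};V)\cong \cH^{q}(\mathfrak{n};V)
\]
at the level of dimensions for each $q$. Combined with the defining relation $\lambda_{\varrho,2n-q}=-\lambda_{\varrho,q}$ for $q\neq n$ from the statement preceding \eqref{sm.23}, this gives us a symmetry under $q\mapsto 2n-q$ between the $q<n$ and $q>n$ contributions to $\alpha_{\varrho}$.

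Concretely, I would split the sum in \eqref{cm.11e} as
\[
\alpha_{\varrho}=-\tfrac12\sum_{q<n}(-1)^{q}(2q+1)\log(2\lambda_{\varrho,q})\dim\cH^{q}(\mathfrak{n};V)+\tfrac12\sum_{q>n}(-1)^{q}(2q+1)\log(-2\lambda_{\varrho,q})\dim\cH^{q}(\mathfrak{n};V),
\]
and reindex the second sum by $q'=2n-q$. Since $2n$ is even we have $(-1)^{2n-q'}=(-1)^{q'}$, while $|\lambda_{\varrho,2n-q'}|=\lambda_{\varrho,q'}$ and $\dim\cH^{2n-q'}=\dim\cH^{q'}$, so the weight on the $q'$-th term becomes $2(2n-q')+1=4n-2q'+1$. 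Combining with the first sum gives coefficient
\[
-(2q+1)+(4n-2q+1)=4(n-q),
\]
and a factor $\tfrac12$ in front, producing $2(n-q)$ as required in \eqref{ff.1c}.

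The only mild obstacle is bookkeeping with signs and with the reindexing $q\mapsto 2n-q$; because $n$ is odd, $2n$ is even and parity of $(-1)^q$ is preserved, which is exactly what makes the simplification clean. Plugging the simplified $\alpha_{\varrho}$ together with $\log\tau(Z,E,\mu_Z)=0$ into the formula \eqref{cm.11d} of Theorem~\ref{cm.11c} yields \eqref{ff.1b}, completing the proof.
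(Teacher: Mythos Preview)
Your proof is correct and follows essentially the same approach as the paper. The paper's one-line proof cites Theorem~\ref{rt.2} for $\tau(Z,E,\mu_Z)=1$ and equation~\eqref{sm.36}, where the self-duality/Poincar\'e-duality reindexing $q\mapsto 2n-q$ was already carried out for $A_P$; you perform the identical reindexing computation directly on $\alpha_\varrho$ in \eqref{cm.11e}, which amounts to the same thing.
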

\begin{proof}
This follows from Theorem~\ref{rt.2} and \eqref{sm.36}.
\end{proof}

Comparing our formula with \cite[Theorem~1.1]{Pfaff2017} gives the following formula for the analytic torsion of the cusps.

\begin{corollary}
If \eqref{dc.3} holds, then 
\begin{multline}
   \log T_{reg}(F_X,\pa F_X;E)= -\frac{1}2 \log \tau(Z,E,\mu_Z) -\frac{\kappa_{\Gamma}}2 \left( \alpha_{\varrho}  + \beta_{\varrho} \right) 
   +c(n)(\rank E) \operatorname{vol}(\pa F_X) \\- \kappa_{\Gamma}\frac{(-1)^n}{4}\log(\lambda^+_{\varrho,n}) \dim\cH^n(\mathfrak{n}, V) - \frac{\kappa_{\Gamma}}4\sum_{q\ne n} (-1)^q
\log|\lambda_{\varrho,q}| \dim\cH^q(\mathfrak{n};V)\end{multline}   
where $c(n)$ is defined in \cite[(15.10)]{Pfaff2017} and $\kappa_{\Gamma}=\# \mathfrak{P}_{\Gamma}$ is the number of cusps of $(X, g_X)$.  
\label{ff.2}\end{corollary}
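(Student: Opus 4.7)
The plan is simply to compare the formula of Theorem~\ref{cm.11c} above with the formula of Pfaff \cite[Theorem~1.1]{Pfaff2017}, which is applicable precisely because we are assuming \eqref{dc.3} (a stronger condition than Assumption~\ref{dc.3b} in the case $G=\Spin(d,1)$, and equivalent to it in the case $G=\SO_o(d,1)$). Pfaff's theorem expresses the global analytic torsion $\log T(X;E,g_X,h_E)$ as a sum of three pieces: the Reidemeister torsion $\log\tau(X(Y),E,\mu_X)$ of the compactification, the regularized analytic torsion $\log T_{\mathrm{reg}}(F_X,\pa F_X;E)$ of the cusp ends (with absolute boundary conditions at $\pa X(Y)$), and an explicit term coming from the $L^2$-metric on cohomology built from Eisenstein series, containing the volume factor $c(n)(\rank E)\vol(\pa F_X)$ together with logarithmic contributions in the weights $\lambda_{\varrho,q}$ and $\lambda_{\varrho,n}^+$ weighted by the dimensions $\dim\cH^q(\mathfrak{n};V)$ and $\dim \cH^n(\mathfrak{n};V)$.

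First I would write out \cite[Theorem~1.1]{Pfaff2017} explicitly, paying close attention to the normalization of the basis $\mu_X$ (matching our $\mu_X$ described in \S\ref{be.0}) and to the fact that in Pfaff's convention the boundary defining function used to regularize the trace is the same $\bx$ we are using. Under \eqref{dc.3} the group $\mathfrak{P}_{\Gamma}^{\varrho}$ coincides with $\mathfrak{P}_{\Gamma}$, so $\kappa_\Gamma^\varrho=\kappa_\Gamma$ and the two sets of cusp contributions can be compared directly without separating trivial from non-trivial $\Xi_P$. Second, I would solve Pfaff's identity for $\log T_{\mathrm{reg}}(F_X,\pa F_X;E)$, obtaining
\begin{equation*}
\log T_{\mathrm{reg}}(F_X,\pa F_X;E)= \log T(X;E,g_X,h_E)-\log\tau(X(Y),E,\mu_X)-\text{(explicit Pfaff term)}.
\end{equation*}
Third, I would substitute Theorem~\ref{cm.11c} for the first two terms on the right-hand side, which produces
\begin{equation*}
\log T_{\mathrm{reg}}(F_X,\pa F_X;E)= -\tfrac12\log\tau(Z,E,\mu_Z)-\tfrac{\kappa_\Gamma}{2}(\alpha_\varrho+\beta_\varrho)-\text{(explicit Pfaff term)}.
\end{equation*}

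The remaining step is bookkeeping: one checks that the explicit Pfaff term, once the sign is taken into account, is exactly
\begin{equation*}
-\,c(n)(\rank E)\vol(\pa F_X)+\kappa_\Gamma\frac{(-1)^n}{4}\log(\lambda^+_{\varrho,n})\dim\cH^n(\mathfrak{n};V)+\frac{\kappa_\Gamma}{4}\sum_{q\neq n}(-1)^q\log|\lambda_{\varrho,q}|\dim\cH^q(\mathfrak{n};V),
\end{equation*}
so that moving it to the other side yields the stated formula. The only point requiring real care, and which I expect to be the main obstacle, is the matching of normalizations between the Eisenstein basis in \cite[\S8]{Pfaff2017} and the basis $\mu_X$ fixed in \S\ref{be.0}: one must verify that the two constructions agree up to a change of basis of determinant one, so that the Reidemeister torsions computed with either basis coincide, and that factors of $\sqrt{c_b}$ (from \eqref{sm.33c}) have been accounted for consistently in both the Pfaff computation and in our \eqref{cm.10}. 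Once this compatibility is established, the identity reduces to the elementary algebraic rearrangement described above.
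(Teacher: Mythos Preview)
Your proposal is correct and matches the paper's approach exactly: the paper simply states (without a formal proof environment) that the corollary follows by ``comparing our formula with \cite[Theorem~1.1]{Pfaff2017},'' which is precisely the subtraction you describe. Your observation that \eqref{dc.3} forces every $\Xi_P$ to be trivial, hence $\kappa_\Gamma^\varrho=\kappa_\Gamma$, is the one point the paper leaves implicit, and you have it right.
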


To conclude this section, let us give a proof of Corollary~\ref{int.7b}.
\begin{proof}[Proof of Corollary~\ref{int.7b}]
In this case, $\kappa_{\Gamma}^{\tau(m)}= \#\mathfrak{P}_{\Gamma}$ does not depend on $m$ and is simply the number of cusp ends of $X$.  By Corollary~\ref{ff.1} and \cite[Theorem~1.1]{MP2012}, the results will follow provided we can show that the defect $-\frac{\#\mathfrak{P}_{\Gamma}}2\left( \alpha_{\tau(m)}+ \beta_{\tau(m)}\right)$ in \eqref{ff.1b} is 
$\mathcal{O}(m^{\frac{n(n+1)}2}\log m)$ as $m\to \infty$.  To see this, recall that by Weyl's dimension formula, there exists a constant $C>0$ such that 
$$
    \dim(\tau(m))= Cm^{\frac{n(n+1)}2}+ \mathcal{O}(m^{\frac{n(n+1)}2-1}) \quad \mbox{as} \; m\to \infty.  
$$
Hence, we easily see that 
$$
  \dim \cH^*(\mathfrak{n};\tau(m))=\mathcal{O}(\dim \tau (m))= \mathcal{O}(m^{\frac{n(n+1)}2}) \quad \mbox{as} \; m\to \infty.  
 $$
 Thus, since by definition $\lambda_{\tau(m),q}= \tau_{q+1}+m+n-q$, we see directly from \eqref{ff.1c} that 
 $$
      \alpha_{\tau(m)}= \mathcal{O}(m^{\frac{n(n+1)}2}\log m) \quad \mbox{as} \; m\to \infty.  
 $$
 Similarly, $(\ker K_P)^{\perp}$ in \eqref{sm.38} is clearly such that 
 $$
   \dim (\ker K_P)^{\perp}=\mathcal{O}(\dim \tau (m))= \mathcal{O}(m^{\frac{n(n+1)}2}) \quad \mbox{as} \; m\to \infty, 
 $$
 while the weights $w_i$ in \eqref{dc.10} are all $\mathcal{O}(m)$ as $m\to \infty$.  Indeed, the $w_i$ are obtained by restricting the weights of $\varrho$ to $\mathfrak{a}$, so this follows from the description of the weights of $\varrho$ in terms of the highest weight, see for instance \cite[\S3.2.2]{GW}.  Therefore, we see from \eqref{cm.11f} that 
 $$
     \beta_{\tau(m)}= \mathcal{O}(m^{\frac{n(n+1)}2}\log m) \quad \mbox{as} \; m\to \infty,
 $$
 from which the result follows.  
\end{proof}

\section{Examples in dimension 3} \label{ed3.0}

We will now apply focus on the case $d=3$ and $n=1$ with $G=\SL(2,\bbC)$ and $K=\SU(2)$.  The standard Iwasawa decomposition of $G=NAK$ is then given by 
$$
     N= \left\{  \left( \begin{array}{cc}  1 & z \\ 0 & 1  \end{array} \right)\in \SL(2,\bbC) \; |  \; z\in \bbC \right\}
$$
with Lie algebra
$$
    \mathfrak{n}=  \left\{  \left( \begin{array}{cc}  0 & z \\ 0 & 0  \end{array} \right)\in \mathfrak{sl}(2,\bbC) \; |  \; z\in \bbC \right\}\cong \bbC
$$
and 
$$
     A= \left\{  \left( \begin{array}{cc}  a^{\frac12} & 0 \\ 0 & a^{-\frac12}  \end{array} \right)\in \SL(2,\bbC) \; |  \; a>0 \right\}
$$
with Lie algebra
$$
\mathfrak{a}= \left\{  \left( \begin{array}{cc}  \frac{\alpha}2 & 0 \\ 0 & -\frac{\alpha}2  \end{array} \right)\in \mathfrak{sl}(2,\bbC) \; |  \; \alpha\in \bbR \right\}\cong \bbR. 
$$  
In this case, the generator $H_1$ of $\mathfrak{a}$ is explicitly given by 
$$
    H_1= \left( \begin{array}{cc}  \frac{1}2 & 0 \\ 0 & -\frac{1}2  \end{array} \right).
$$
The standard parabolic subgroup with respect to this decomposition is then given by $P_0= NAM$ with 
$$
      M=\left\{  \left( \begin{array}{cc}  e^{i\theta} & 0 \\ 0 & e^{-i\theta}  \end{array} \right)\in \SL(2,\bbC) \; |  \; \theta\in \bbR \right\}.
$$
Now, the irreducible real representations of $\SL(2,\bbC)$ are given by the complex symmetric powers of the standard representation, 
$$
   \varrho_m: \SL(2,\bbC)\to \GL(V_m), \quad V_m:= \Sym^m(\bbC^2), \quad m\in \bbN.
$$
If we let $e_1=\left( \begin{array}{c}  1 \\ 0   \end{array} \right)$ and $e_2=\left( \begin{array}{c}  0 \\ 1   \end{array} \right)$ be the standard basis of $\bbC^2$, then 
$$
      v_j= e_1^j e_2^{m-j}, \quad j\in \{0,1,\ldots,m\}
$$
is a basis of $V_m$.  An admissible inner product on $V_m$ is obtained by declaring the basis $\{v_j\}$ to be orthonormal.   Regarding $dz$ and $d\bz$ as elements of $\mathfrak{n}^*\otimes \bbC$, a simple computation shows that 
\begin{equation}
\cH^q(\mathfrak{n};V_m) =\left\{  \begin{array}{ll} \bbC v_m, & q=0, \\ \bbC v_m d\overline z \oplus \bbC v_0 dz,  & q=1 \\
       \bbC v_0 dz\wedge d\overline z, & q=2, \end{array}  \right.
\label{md.3}\end{equation}
with $\cH_+^1(\mathfrak{n};V_m)=\bbC v_m d\bz$ and $\cH_-^1(\mathfrak{n};V_m)= \bbC v_0 dz$.  Moreover, the number $\lambda_{\varrho_m,q}$ describing the action of $H_1\in \mathfrak{a}$ on these spaces are given by
\begin{equation}
        \lambda_{\varrho_m,0}= \frac{m}2+1, \quad \lambda_{\varrho_m,1}^+= \frac{m}2,  \quad \lambda_{\varrho_m,1}^-= -\frac{m}2, \quad \lambda_{\varrho_m,2}= -\frac{m}2-1.        \label{dt.1}\end{equation}
We also compute that 
\begin{equation}
\begin{gathered}
  d_{\mathfrak{n}}v_k = (m-k)v_{k+1} dz, \quad
  d_{\mathfrak{n}}v_k  dz=0, \\
  d_{\mathfrak{n}} v_k d\overline{z} = (m-k) v_{k+1}  dz \wedge  d\overline{z}, \quad
  d_{\mathfrak{n}} v_k \frac{i}2 dz \wedge d\overline{z} =0, \\
  d^*_{\mathfrak{n}} v_k =0, \quad
  d^*_{\mathfrak{n}} v_k  dz = 2kv_{k-1}, \\
  d^*_{\mathfrak{n}} v_k  d\overline{z} =0, \quad
  d^*_{\mathfrak{n}}v_k \frac{i}2 dz \wedge d\overline{z} = ik v_{k-1} d\overline{z},
\end{gathered}
\label{mdb.1}\end{equation}  
 so that the Kostant Laplacian $K_{P_0}^2 =(d_{\mathfrak{n}}+ d_{\mathfrak{n}}^*)^2$ is given by
 $$
         K_{P_0}^2(v_k (dz)^p\wedge(d\bz)^q)= 2(k+1-p)(m-k+p)v_k (dz)^p\wedge(d\bz)^q. 
$$ 
Moreover, we have that 
$$
     \ker d^*_{\mathfrak{n}}\cap \left( \Lambda^q\mathfrak{n}^*\otimes V_m \right)= \left\{ \begin{array}{ll} V_m, & q=0, \\
                                                                                           V_m \otimes d\bz\oplus \bbC v_0 \otimes  d\bz, & q=1, \\
                                                                                           \bbC v_0 \otimes dz\wedge d\bz, & q=2,   \end{array}  \right.
$$
so that 
$$
    V_{0,j-\frac{m}2+1,\sqrt{2(j+1)(m-j)}}= \bbC v_j,  \quad V_{1,j-\frac{m}2, \sqrt{2(j+1)(m-j)}} \bbC v_j d\bz, 
$$
and otherwise $V_{q,a,b}=\{0\}$ for other values of $q$ and $a$ when $b>0$.

Now, let $\Gamma\subset \SL(2,\bbC)$ be a discrete subgroup such that $X=\Gamma\setminus G/K= \Gamma\setminus \bbH^3$ is a hyperbolic manifold of finite volume.  In this setting, the assumption \eqref{dc.3} does not necessarily hold, but \eqref{dc.3c} will. Furthermore, $\varrho_m(-\Id)= (-1)^m\Id$, so that \eqref{dc.3d} holds.   We can therefore apply Corollary~\ref{ff.1}, giving the following formula.

\begin{theorem}
For $d=3$, let $X= \Gamma \setminus \SL(2,\bbC) / \SU(2)$ a finite volume $3$-dimensional  hyperbolic manifold, where $\Gamma$ is a discrete subgroup of $\SL(2,\bbC)$.  Let $E\to X$ be the canonical bundle of \cite{MM1963} associated to the irreducible representation $\varrho_m: \SL(2,\bbC)\to \GL(V_m)$  and equipped with the admissible metric $h_E$.  Then we have the following relation between the analytic torsion of $(X,E,g_{X},h_E)$ and Reidemeister torsion of $(X(Y),E)$,
 $$
   \log T(X,E,g_X,h_E)= \log \tau(X(Y), E, \mu_{X}) - \kappa_m(X)\left(\log(m+2)+ \frac{B(m)}{2}\right)
 $$
 where 
 \begin{equation}
   B(m)=\sum_{\kappa=0}^{m-1}  \log\left( \frac{ \frac{m}2-\kappa +\sqrt{(\frac{m}2-\kappa)^2+2(1+\kappa)(m-\kappa)}}{\frac{m}2-\kappa-1+\sqrt{(\frac{m}2-\kappa-1)^2+2(1+\kappa)(m-\kappa)}} \right) 
 \label{cm.11b}\end{equation}
 and  $\kappa_m(X)$ is the number of connected  components $T_P$ of $\pa \bX$ for which $H^*(T_P;E_P)$ is non-trivial.  In particular, $\kappa_m(X)$ is equal to the number of cusps when $m$ is even, but can be smaller when $m$ is odd.   

\label{cm.11}\end{theorem}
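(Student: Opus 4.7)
The plan is to obtain the theorem as a direct specialization of Corollary~\ref{ff.1} to $d=3$, $n=1$, $\varrho=\varrho_m$. The paragraph preceding the statement already verifies Assumption~\ref{dc.3b}: condition \eqref{dc.3c} is automatic for arbitrary discrete $\Gamma\subset\SL(2,\bbC)$, and $\varrho_m(-\Id)=(-1)^m\Id$ secures \eqref{dc.3d}. Condition \eqref{sm.25} holds for $m\ge 1$ because $\varrho_m$ restricts nontrivially to $\mathfrak{a}\subset\mathfrak{p}$. Since $\kappa^{\varrho_m}_\Gamma=\kappa_m(X)$ by definition, matching the stated formula amounts to proving $\alpha_{\varrho_m}=2\log(m+2)$ and $\beta_{\varrho_m}=B(m)$. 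The first identity is immediate from \eqref{ff.1c}: with $n=1$ only the $q=0$ term survives, and \eqref{md.3}, \eqref{dt.1} give $\dim\cH^0(\mathfrak{n};V_m)=1$ and $\lambda_{\varrho_m,0}=\tfrac{m}{2}+1$.

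The main computation is the identification $\beta_{\varrho_m}=B(m)$, which requires an explicit description of the spaces $V_{q,a,b}$ in the decomposition \eqref{sm.38}. From \eqref{mdb.1}, in degree $q=0$ the intersection $\ker d^*_{\mathfrak{n}}\cap(\Lambda^0\mathfrak{n}^*\otimes V_m)$ is all of $V_m$, with $K^2_{P_0}v_j=2(j+1)(m-j)\,v_j$ and $(W+1)v_j=(j-\tfrac{m}{2}+1)v_j$; in degree $q=1$ one finds $\ker d^*_{\mathfrak{n}}\cap(\Lambda^1\mathfrak{n}^*\otimes V_m)=V_m\otimes d\bz\oplus\bbC v_0\otimes dz$, where $v_0\otimes dz\in\cH^1_-\subset\ker K_{P_0}$ and on the remaining basis $K^2_{P_0}(v_j\otimes d\bz)=2(j+1)(m-j)\,v_j\otimes d\bz$ with $(W+1)(v_j\otimes d\bz)=(j-\tfrac{m}{2})v_j\otimes d\bz$. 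Discarding the indices with $b=0$ (namely $j=m$ in both degrees, together with $v_0\otimes dz$), the $q=0$ and $q=1$ contributions to \eqref{cm.11f} have the same $b$-values and $a$-values offset by $1$, so they combine with opposite signs into
\begin{equation*}
\beta_{\varrho_m}=\sum_{j=0}^{m-1}\log\!\left(\frac{(j-\tfrac{m}{2}+1)+\sqrt{(j-\tfrac{m}{2}+1)^2+2(j+1)(m-j)}}{(j-\tfrac{m}{2})+\sqrt{(j-\tfrac{m}{2})^2+2(j+1)(m-j)}}\right).
\end{equation*}
The substitution $\kappa=m-1-j$ transforms this term by term into \eqref{cm.11b}, completing the identification $\beta_{\varrho_m}=B(m)$. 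The main obstacle is nothing more than bookkeeping: carefully distinguishing which extremal indices belong to $\ker K_{P_0}$ from those in $(\ker K_{P_0})^\perp$, and keeping the signs and offsets straight in the two-degree sum.
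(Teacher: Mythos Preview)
Your proposal is correct and follows precisely the paper's approach: the paper introduces Theorem~\ref{cm.11} with the sentence ``We can therefore apply Corollary~\ref{ff.1}, giving the following formula,'' relying on the explicit description of $\cH^q(\mathfrak{n};V_m)$, the eigenvalues $\lambda_{\varrho_m,q}$, and the spaces $V_{q,a,b}$ computed immediately before. You carry out exactly this specialization, correctly identifying $\alpha_{\varrho_m}=2\log(m+2)$ from \eqref{ff.1c} and $\beta_{\varrho_m}=B(m)$ via the substitution $\kappa=m-1-j$, which is the bookkeeping the paper leaves implicit.
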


\begin{corollary}
If $m$ is odd and $H^*(T_P;E_P)=\{0\}$ for each $P\in \mathfrak{P}_{\Gamma}$, then the formula simplifies  to
$$
 \log T(X,E,g,h)= \log \tau(\bX, E).
$$
\label{cm.13}\end{corollary}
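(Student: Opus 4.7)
The plan is to specialize Theorem~\ref{cm.11} under the vanishing hypothesis. By the definition of $\kappa_m(X)$ recalled in the statement of Theorem~\ref{cm.11} (namely the number of connected components $T_P$ of $\pa\bX$ for which $H^*(T_P;E_P)$ is non-trivial), the assumption that $H^*(T_P;E_P)=\{0\}$ for every $P\in \mathfrak{P}_{\Gamma}$ forces $\kappa_m(X)=\#\mathfrak{P}_{\Gamma}^{\varrho_m}=0$. Hence the entire defect term $-\kappa_m(X)\bigl(\log(m+2)+\tfrac{1}{2}B(m)\bigr)$ on the right-hand side of the formula in Theorem~\ref{cm.11} vanishes identically, and one is left with $\log T(X,E,g_X,h_E)=\log \tau(X(Y),E,\mu_X)$.

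It remains to identify $\tau(X(Y),E,\mu_X)$ with $\tau(\bX,E)$, where the latter is defined without reference to a cohomology basis. For this I would observe that $\pa X(Y)\cong \pa\bX\cong \bigsqcup_{P\in \mathfrak{P}_{\Gamma}} T_P$, so the hypothesis gives $H^*(\pa X(Y);E)=\{0\}$, and then invoke Proposition~\ref{l2.1} degree by degree: in degrees $q<n=1$ and in degree $q=2n+1=3$ the vanishing of $H^q(X(Y);E)$ is direct; in degree $q=n=1$ the proposition provides an injection $\iota_Y^*:H^1(X(Y);E)\hookrightarrow H^1(\pa X(Y);E)=\{0\}$; and in degrees $n<q\le 2n$, i.e.\ $q=2$, the proposition gives an isomorphism $\iota_Y^*:H^2(X(Y);E)\xrightarrow{\cong} H^2(\pa X(Y);E)=\{0\}$. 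Therefore $H^*(X(Y);E)=\{0\}$ in all degrees, so the cohomology basis $\mu_X$ is empty and $\tau(X(Y),E,\mu_X)$ becomes a basis-independent number. Since $X(Y)$ is homotopy equivalent to $\bX$ (the boundary hypersurfaces $\{Y\}\times T_P$ can be pushed to the ideal boundary $\{\infty\}\times T_P$), this common value is $\tau(\bX,E)$, completing the proof.

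There is no substantive obstacle: the corollary is a bookkeeping consequence of Theorem~\ref{cm.11} combined with the vanishing of cohomology extracted from Proposition~\ref{l2.1}. The only point warranting verification is that the basis $\mu_X$ constructed in \S\ref{be.0} collapses to the empty basis under the hypothesis, but this is automatic because every cohomology group it indexes is trivial.
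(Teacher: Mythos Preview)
Your proof is correct and follows the same approach the paper implicitly takes: the paper states this corollary without proof, treating it as an immediate consequence of Theorem~\ref{cm.11} once one observes that the hypothesis forces $\kappa_m(X)=0$, which kills the defect term and (via Proposition~\ref{l2.1} or equivalently Remark~\ref{int.5}) makes $H^*(\bX;E)$ trivial so that no basis $\mu_X$ is needed. Your explicit degree-by-degree invocation of Proposition~\ref{l2.1} is more detailed than the paper but entirely in the same spirit.
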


In particular, when $\pa\bX=T_P$ is connected (\eg $X$ is the complement of a hyperbolic knot), we know from \cite{MFP2012} that $H^*(T_P;E_P)=\{0\}$ when $m$ is odd, so Corollary~\ref{cm.13} applies.    

Instead, when $m=2n$ is even, the formula gives.
\begin{corollary}
When $m=2\ell$ is even, this gives the following formula
$$
\log T(X,E,g,h)= \log \tau(\bX, E, \mu_{\bX}) + \kappa_{\Gamma}\log\left( b(\ell) \right).
$$
where
$$
  b(\ell):= \frac{1}{2\ell+2}\prod_{k=-\ell}^{\ell-1}\left( \frac{\sqrt{(\ell+1)^2+\ell^2-k^2}-k-1}{\sqrt{(\ell+1)^2+\ell^2-(k+1^2)}-k} \right)^{\frac12}.
$$
\label{cm.14}\end{corollary}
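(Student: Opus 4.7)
The plan is to deduce the corollary directly from Theorem~\ref{cm.11} by (i) identifying $\kappa_m(X)$ in the even case with $\kappa_\Gamma$, and (ii) matching the explicit constant $-\kappa_\Gamma(\log(m+2)+B(m)/2)$ with $\kappa_\Gamma\log b(\ell)$ through an elementary algebraic manipulation.

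For (i), I would begin by observing that when $m=2\ell$ is even, $\varrho_{m}(-\Id)=\Id$, so that $\varrho_m(\kappa_P(\gamma))=\Id$ for every $\gamma\in\Gamma\cap P$. Hence the homomorphism $\Xi_P$ introduced before \eqref{dc.11} is trivial for every $P\in\mathfrak{P}_\Gamma$, i.e. $\mathfrak{P}^{\varrho_m}_\Gamma=\mathfrak{P}_\Gamma$. By Lemma~\ref{dc.27}, $H^*(T_P;E_P)\cong H^*(\mathfrak{n};V_m)$ is then non-trivial at every cusp (cf.\ \eqref{md.3}), so $\kappa_m(X)=\kappa_\Gamma$, as already noted in the remark following Theorem~\ref{cm.11}. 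What remains is thus the identity
\[
-\log(2\ell+2)-\tfrac12 B(2\ell)\;=\;\log b(\ell).
\]

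For (ii), I would reparametrize the sum defining $B(2\ell)$ by setting $k=\ell-1-\kappa$, so that $k$ runs from $-\ell$ to $\ell-1$ and
\[
\tfrac{m}{2}-\kappa=k+1,\qquad \tfrac{m}{2}-\kappa-1=k,\qquad 1+\kappa=\ell-k,\qquad m-\kappa=\ell+k+1.
\]
A direct expansion gives
\[
(\tfrac{m}{2}-\kappa)^2+2(1+\kappa)(m-\kappa)=(\ell+1)^2+\ell^2-k^2,
\]
\[
(\tfrac{m}{2}-\kappa-1)^2+2(1+\kappa)(m-\kappa)=(\ell+1)^2+\ell^2-(k+1)^2,
\]
so that, writing $S_k:=\sqrt{(\ell+1)^2+\ell^2-k^2}$ for short, one has
\[
B(2\ell)=\sum_{k=-\ell}^{\ell-1}\log\!\left(\frac{S_k+k+1}{S_{k+1}+k}\right).
\]
The key step is now the identity $(S_k+k+1)(S_k-k-1)=(\ell+1)^2+\ell^2-k^2-(k+1)^2=(S_{k+1}-k)(S_{k+1}+k)$, which rewrites each factor of the sum as
\[
\frac{S_k+k+1}{S_{k+1}+k}=\left(\frac{S_k-k-1}{S_{k+1}-k}\right)^{-1}.
\]
Summing this identity over $-\ell\le k\le \ell-1$ gives $-B(2\ell)=\sum_{k=-\ell}^{\ell-1}\log\!\left(\frac{S_k-k-1}{S_{k+1}-k}\right)=2(\log b(\ell)+\log(2\ell+2))$, which is exactly the required algebraic identity. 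Combining this with $\kappa_m(X)=\kappa_\Gamma$ and Theorem~\ref{cm.11} yields the corollary.

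I do not expect any real obstacle: once Theorem~\ref{cm.11} is in hand, the content is a bookkeeping check. The only subtlety to flag is that the product in $b(\ell)$ is term-by-term positive (each $S_k^2$ strictly exceeds both $(k+1)^2$ and $k^2$ in the range $-\ell\le k\le \ell-1$), so taking square roots and logarithms causes no ambiguity of sign.
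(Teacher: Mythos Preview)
Your proof is correct and follows the same approach as the paper: both derive the corollary from Theorem~\ref{cm.11} by a change of summation index in $B(2\ell)$. The paper's substitution $k=\kappa-\ell$ (rather than your $k=\ell-1-\kappa$) lands directly on the form $\frac{S_k-k-1}{S_{k+1}-k}$ appearing in $b(\ell)$, so your conjugate identity $(S_k+k+1)(S_k-k-1)=(S_{k+1}-k)(S_{k+1}+k)$ becomes unnecessary; otherwise the arguments are identical.
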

\begin{proof}
If we set $k= \kappa-\frac{m}2=\kappa-\ell$ in \eqref{cm.11b}, one computes that         
$$
B(2\ell)= -\log\left(  \prod_{k=-\ell}^{\ell-1}\left( \frac{\sqrt{(\ell+1)^2+\ell^2-k^2}-k-1}{\sqrt{(\ell+1)^2+\ell^2-(k+1^2)}-k} \right)\right),  
$$
from which the result follows.
\end{proof}
This should be compared with \cite[Theorem1.1]{Pfaff2014}.  In this formula, the torsion of $X$ is defined in terms of homology with basis specified in \cite{MFP2014}.  As one can check, the ratio of torsions considered in \cite[Theorem1.1]{Pfaff2014} remains the same if we define it instead in terms of cohomology using the bases specified in Theorem~\ref{cm.11}.  Hence, we deduce the following identity from Corollary~\ref{cm.14} and \cite[Theorem1.1]{Pfaff2014},
\begin{equation}
\frac{c(\ell)}{c(2)}= \frac{b(\ell)}{b(2)} \quad \mbox{for} \; \ell\ge 2,
\label{cm.15}\end{equation}
where 
$$
   c(\ell):= \frac{\prod_{j=1}^{\ell-1} (\sqrt{(\ell+1)^2+\ell^2-j^2}+\ell)  }{\prod_{j=1}^{\ell} (\sqrt{(\ell+1)^2+\ell^2-j^2}+\ell+1) }\left( \frac{\sqrt{(\ell+1)^2+\ell^2}+\ell }{\sqrt{(\ell+1)^2+\ell^2}+\ell+1 } \right)^{\frac12}
$$

\bibliographystyle{amsalpha}
\bibliography{torsion_cusp}

\end{document}